\newtheorem{theorem}{Theorem}
\newtheorem{axiom}[theorem]{Axiom}
\newtheorem{conjecture}[theorem]{Conjecture}
\newtheorem{corollary}[theorem]{Corollary}
\newtheorem{definition}[theorem]{Definition}
\newtheorem{example}[theorem]{Example}
\newtheorem{exercise}[theorem]{Exercise}
\newtheorem{lemma}[theorem]{Lemma}
\newtheorem{proposition}[theorem]{Proposition}
\newtheorem{remark}[theorem]{Remark}
\newenvironment{proof}[1][Proof]{\noindent\textbf{#1.} }{\ \rule{0.5em}{0.5em}}
\newdimen\dummy
\chardef\@x10\chardef\@xv60
\def\tcitime{
\def\@time{%
  \@minute\time\@hour\@minute\divide\@hour\@xv
  \ifnum\@hour<\@x 0\fi\the\@hour:%
  \multiply\@hour\@xv\advance\@minute-\@hour
  \ifnum\@minute<\@x 0\fi\the\@minute
  }}%
\def\x@hyperref#1#2#3{%
   \catcode`\~ = 12
   \catcode`\$ = 12
   \catcode`\_ = 12
   \catcode`\# = 12
   \catcode`\& = 12
   \y@hyperref{#1}{#2}{#3}%
}
\def\y@hyperref#1#2#3#4{%
   #2\ref{#4}#3
   \catcode`\~ = 13
   \catcode`\$ = 3
   \catcode`\_ = 8
   \catcode`\# = 6
   \catcode`\& = 4
}
\def\QCTOpt[#1]#2{%
  \def\QCTOptB{#1}
  \def\QCTOptA{#2}
}
\def\QCTNOpt#1{%
  \def\QCTOptA{#1}
  \let\QCTOptB\empty
}
\def\Qct{%
  \@ifnextchar[{%
    \QCTOpt}{\QCTNOpt}
}
\def\QCBOpt[#1]#2{%
  \def\QCBOptB{#1}%
  \def\QCBOptA{#2}%
}
\def\QCBNOpt#1{%
  \def\QCBOptA{#1}%
  \let\QCBOptB\empty
}
\def\Qcb{%
  \@ifnextchar[{%
    \QCBOpt}{\QCBNOpt}%
}
\def\PrepCapArgs{%
  \ifx\QCBOptA\empty
    \ifx\QCTOptA\empty
      {}%
    \else
      \ifx\QCTOptB\empty
        {\QCTOptA}%
      \else
        [\QCTOptB]{\QCTOptA}%
      \fi
    \fi
  \else
    \ifx\QCBOptA\empty
      {}%
    \else
      \ifx\QCBOptB\empty
        {\QCBOptA}%
      \else
        [\QCBOptB]{\QCBOptA}%
      \fi
    \fi
  \fi
}
\def\GRAPHICSPS#1{%
 \ifcase\GRAPHICSTYPE
   \special{ps: #1}%
 \or
   \special{language "PS", include "#1"}%
 \fi
}%
\def\graffile#1#2#3#4{%
    \bgroup
       \@inlabelfalse
       \leavevmode
       \@ifundefined{bbl@deactivate}{\def~{\string~}}{\activesoff}%
        \raise -#4 \BOXTHEFRAME{%
           \hbox to #2{\raise #3\hbox to #2{\null #1\hfil}}}%
    \egroup
}%
\def\draftbox#1#2#3#4{%
 \leavevmode\raise -#4 \hbox{%
  \frame{\rlap{\protect\tiny #1}\hbox to #2%
   {\vrule height#3 width\z@ depth\z@\hfil}%
  }%
 }%
}%
\let\nographics=\@msidraft
\newif\ifwasdraft
\def\GRAPHIC#1#2#3#4#5{%
   \ifnum\@msidraft=\@ne\draftbox{#2}{#3}{#4}{#5}%
   \else\graffile{#1}{#3}{#4}{#5}%
   \fi
}
\def\addtoLaTeXparams#1{%
    \edef\LaTeXparams{\LaTeXparams #1}}%
\newif\ifBoxFrame \BoxFramefalse
\newif\ifOverFrame \OverFramefalse
\newif\ifUnderFrame \UnderFramefalse
\def\BOXTHEFRAME#1{%
   \hbox{%
      \ifBoxFrame
         \frame{#1}%
      \else
         {#1}%
      \fi
   }%
}
\def\doFRAMEparams#1{\BoxFramefalse\OverFramefalse\UnderFramefalse\readFRAMEparams#1\end}%
\def\readFRAMEparams#1{%
 \ifx#1\end%
  \let\next=\relax
  \else
  \ifx#1i\dispkind=\z@\fi
  \ifx#1d\dispkind=\@ne\fi
  \ifx#1f\dispkind=\tw@\fi
  \ifx#1t\addtoLaTeXparams{t}\fi
  \ifx#1b\addtoLaTeXparams{b}\fi
  \ifx#1p\addtoLaTeXparams{p}\fi
  \ifx#1h\addtoLaTeXparams{h}\fi
  \ifx#1X\BoxFrametrue\fi
  \ifx#1O\OverFrametrue\fi
  \ifx#1U\UnderFrametrue\fi
  \ifx#1w
    \ifnum\@msidraft=1\wasdrafttrue\else\wasdraftfalse\fi
    \@msidraft=\@ne
  \fi
  \let\next=\readFRAMEparams
  \fi
 \next
 }%
\def\IFRAME#1#2#3#4#5#6{%
      \bgroup
      \let\QCTOptA\empty
      \let\QCTOptB\empty
      \let\QCBOptA\empty
      \let\QCBOptB\empty
      #6%
      \parindent=0pt
      \leftskip=0pt
      \rightskip=0pt
      \setbox0=\hbox{\QCBOptA}%
      \@tempdima=#1\relax
      \ifOverFrame
          \typeout{This is not implemented yet}%
          \show\HELP
      \else
         \ifdim\wd0>\@tempdima
            \advance\@tempdima by \@tempdima
            \ifdim\wd0 >\@tempdima
               \setbox1 =\vbox{%
                  \unskip\hbox to \@tempdima{\hfill\GRAPHIC{#5}{#4}{#1}{#2}{#3}\hfill}%
                  \unskip\hbox to \@tempdima{\parbox[b]{\@tempdima}{\QCBOptA}}%
               }%
               \wd1=\@tempdima
            \else
               \textwidth=\wd0
               \setbox1 =\vbox{%
                 \noindent\hbox to \wd0{\hfill\GRAPHIC{#5}{#4}{#1}{#2}{#3}\hfill}\\%
                 \noindent\hbox{\QCBOptA}%
               }%
               \wd1=\wd0
            \fi
         \else
            \ifdim\wd0>0pt
              \hsize=\@tempdima
              \setbox1=\vbox{%
                \unskip\GRAPHIC{#5}{#4}{#1}{#2}{0pt}%
                \break
                \unskip\hbox to \@tempdima{\hfill \QCBOptA\hfill}%
              }%
              \wd1=\@tempdima
           \else
              \hsize=\@tempdima
              \setbox1=\vbox{%
                \unskip\GRAPHIC{#5}{#4}{#1}{#2}{0pt}%
              }%
              \wd1=\@tempdima
           \fi
         \fi
         \@tempdimb=\ht1
         \advance\@tempdimb by -#2
         \advance\@tempdimb by #3
         \leavevmode
         \raise -\@tempdimb \hbox{\box1}%
      \fi
      \egroup%
}%
\def\DFRAME#1#2#3#4#5{%
  \hfil\break
  \bgroup
     \leftskip\@flushglue
     \rightskip\@flushglue
     \parindent\z@
     \parfillskip\z@skip
     \let\QCTOptA\empty
     \let\QCTOptB\empty
     \let\QCBOptA\empty
     \let\QCBOptB\empty
     \vbox\bgroup
        \ifOverFrame
           #5\QCTOptA\par
        \fi
        \GRAPHIC{#4}{#3}{#1}{#2}{\z@}%
        \ifUnderFrame
           \break#5\QCBOptA
        \fi
     \egroup
   \egroup
   \break
}%
\def\FFRAME#1#2#3#4#5#6#7{%
  \@ifundefined{floatstyle}
    {
     \begin{figure}[#1]%
    }
    {
     \ifx#1h
      \begin{figure}[H]%
     \else
      \begin{figure}[#1]%
     \fi
    }
  \let\QCTOptA\empty
  \let\QCTOptB\empty
  \let\QCBOptA\empty
  \let\QCBOptB\empty
  \ifOverFrame
    #4
    \ifx\QCTOptA\empty
    \else
      \ifx\QCTOptB\empty
        \caption{\QCTOptA}%
      \else
        \caption[\QCTOptB]{\QCTOptA}%
      \fi
    \fi
    \ifUnderFrame\else
      \label{#5}%
    \fi
  \else
    \UnderFrametrue%
  \fi
  \begin{center}\GRAPHIC{#7}{#6}{#2}{#3}{\z@}\end{center}%
  \ifUnderFrame
    #4
    \ifx\QCBOptA\empty
      \caption{}%
    \else
      \ifx\QCBOptB\empty
        \caption{\QCBOptA}%
      \else
        \caption[\QCBOptB]{\QCBOptA}%
      \fi
    \fi
    \label{#5}%
  \fi
  \end{figure}%
 }%
\def\makeactives{
  \catcode`\"=\active
  \catcode`\;=\active
  \catcode`\:=\active
  \catcode`\'=\active
  \catcode`\~=\active
}
   \gdef\activesoff{%
      \def"{\string"}%
      \def;{\string;}%
      \def:{\string:}%
      \def'{\string'}%
      \def~{\string~}%
    }
\def\FRAME#1#2#3#4#5#6#7#8{%
 \bgroup
 \ifnum\@msidraft=\@ne
   \wasdrafttrue
 \else
   \wasdraftfalse%
 \fi
 \def\LaTeXparams{}%
 \dispkind=\z@
 \def\LaTeXparams{}%
 \doFRAMEparams{#1}%
 \ifnum\dispkind=\z@\IFRAME{#2}{#3}{#4}{#7}{#8}{#5}\else
  \ifnum\dispkind=\@ne\DFRAME{#2}{#3}{#7}{#8}{#5}\else
   \ifnum\dispkind=\tw@
    \edef\@tempa{\noexpand\FFRAME{\LaTeXparams}}%
    \@tempa{#2}{#3}{#5}{#6}{#7}{#8}%
    \fi
   \fi
  \fi
  \ifwasdraft\@msidraft=1\else\@msidraft=0\fi{}%
  \egroup
 }%
\def\TEXUX#1{"texux"}
\long\def\QQQ#1#2{%
     \long\expandafter\def\csname#1\endcsname{#2}}%
\long\def\QQA#1#2{}%
\def\QTR#1#2{{\csname#1\endcsname {#2}}}%
\def\EXPAND#1[#2]#3{}%
\def\NOEXPAND#1[#2]#3{}%
\def\LaTeXparent#1{}%
\def\ChildStyles#1{}%
\def\ChildDefaults#1{}%
\def\QTagDef#1#2#3{}%
  \providecommand{\UNICODE}[2][]{\protect\rule{.1in}{.1in}}
  \providecommand{\U}[1]{\protect\rule{.1in}{.1in}}
\def\QQfnmark#1{\footnotemark}
 \def\abstract{%
  \if@twocolumn
   \section*{Abstract (Not appropriate in this style!)}%
   \else \small
   \begin{center}{\bf Abstract\vspace{-.5em}\vspace{\z@}}\end{center}%
   \quotation
   \fi
  }%
   \def\registered{\relax\ifmmode{}\r@gistered
                    \else$\m@th\r@gistered$\fi}%
 \def\r@gistered{^{\ooalign
  {\hfil\raise.07ex\hbox{$\scriptstyle\rm\text{R}$}\hfil\crcr
  \mathhexbox20D}}}}{}%
\newdimen\theight
\def\newfmtname{LaTeX2e}
  \DeclareOldFontCommand{\rm}{\normalfont\rmfamily}{\mathrm}
  \DeclareOldFontCommand{\sf}{\normalfont\sffamily}{\mathsf}
  \DeclareOldFontCommand{\tt}{\normalfont\ttfamily}{\mathtt}
  \DeclareOldFontCommand{\bf}{\normalfont\bfseries}{\mathbf}
  \DeclareOldFontCommand{\it}{\normalfont\itshape}{\mathit}
  \DeclareOldFontCommand{\sl}{\normalfont\slshape}{\@nomath\sl}
  \DeclareOldFontCommand{\sc}{\normalfont\scshape}{\@nomath\sc}
\def\alpha{{\Greekmath 010B}}%
\def\beta{{\Greekmath 010C}}%
\def\gamma{{\Greekmath 010D}}%
\def\delta{{\Greekmath 010E}}%
\def\epsilon{{\Greekmath 010F}}%
\def\zeta{{\Greekmath 0110}}%
\def\eta{{\Greekmath 0111}}%
\def\theta{{\Greekmath 0112}}%
\def\iota{{\Greekmath 0113}}%
\def\kappa{{\Greekmath 0114}}%
\def\lambda{{\Greekmath 0115}}%
\def\mu{{\Greekmath 0116}}%
\def\nu{{\Greekmath 0117}}%
\def\xi{{\Greekmath 0118}}%
\def\pi{{\Greekmath 0119}}%
\def\rho{{\Greekmath 011A}}%
\def\sigma{{\Greekmath 011B}}%
\def\tau{{\Greekmath 011C}}%
\def\upsilon{{\Greekmath 011D}}%
\def\phi{{\Greekmath 011E}}%
\def\chi{{\Greekmath 011F}}%
\def\psi{{\Greekmath 0120}}%
\def\omega{{\Greekmath 0121}}%
\def\varepsilon{{\Greekmath 0122}}%
\def\vartheta{{\Greekmath 0123}}%
\def\varpi{{\Greekmath 0124}}%
\def\varrho{{\Greekmath 0125}}%
\def\varsigma{{\Greekmath 0126}}%
\def\varphi{{\Greekmath 0127}}%
\def\nabla{{\Greekmath 0272}}
\def\FindBoldGroup{%
   {\setbox0=\hbox{$\mathbf{x\global\edef\theboldgroup{\the\mathgroup}}$}}%
}
\def\Greekmath#1#2#3#4{%
    \if@compatibility
        \ifnum\mathgroup=\symbold
           \mathchoice{\mbox{\boldmath$\displaystyle\mathchar"#1#2#3#4$}}%
                      {\mbox{\boldmath$\textstyle\mathchar"#1#2#3#4$}}%
                      {\mbox{\boldmath$\scriptstyle\mathchar"#1#2#3#4$}}%
                      {\mbox{\boldmath$\scriptscriptstyle\mathchar"#1#2#3#4$}}%
        \else
           \mathchar"#1#2#3#4%
        \fi
    \else
        \FindBoldGroup
        \ifnum\mathgroup=\theboldgroup 
           \mathchoice{\mbox{\boldmath$\displaystyle\mathchar"#1#2#3#4$}}%
                      {\mbox{\boldmath$\textstyle\mathchar"#1#2#3#4$}}%
                      {\mbox{\boldmath$\scriptstyle\mathchar"#1#2#3#4$}}%
                      {\mbox{\boldmath$\scriptscriptstyle\mathchar"#1#2#3#4$}}%
        \else
           \mathchar"#1#2#3#4%
        \fi
      \fi}
\newif\ifGreekBold  \GreekBoldfalse
\let\SAVEPBF=\pbf
\def\pbf{\GreekBoldtrue\SAVEPBF}%
  \newcounter{equationnumber}
  \def\mathletters{%
     \addtocounter{equation}{1}
     \edef\@currentlabel{\theequation}%
     \setcounter{equationnumber}{\c@equation}
     \setcounter{equation}{0}%
     \edef\theequation{\@currentlabel\noexpand\alph{equation}}%
  }
    \def\BibTeX{{\rm B\kern-.05em{\sc i\kern-.025em b}\kern-.08em
                 T\kern-.1667em\lower.7ex\hbox{E}\kern-.125emX}}}{}%
\def\AmS{{\protect\usefont{OMS}{cmsy}{m}{n}%
                A\kern-.1667em\lower.5ex\hbox{M}\kern-.125emS}}}{}%
\def\@@eqncr{\let\@tempa\relax
    \ifcase\@eqcnt \def\@tempa{& & &}\or \def\@tempa{& &}%
      \else \def\@tempa{&}\fi
     \@tempa
     \if@eqnsw
        \iftag@
           \@taggnum
        \else
           \@eqnnum\stepcounter{equation}%
        \fi
     \fi
     \global\tag@false
     \global\@eqnswtrue
     \global\@eqcnt\z@\cr}
\def\TCItag{\@ifnextchar*{\@TCItagstar}{\@TCItag}}
\def\@TCItag#1{%
    \global\tag@true
    \global\def\@taggnum{(#1)}}
\def\@TCItagstar*#1{%
    \global\tag@true
    \global\def\@taggnum{#1}}
\def\ExitTCILatex{\makeatother }
\let\DOTSI\relax
\def\RIfM@{\relax\ifmmode}%
\def\FN@{\futurelet\next}%
\def\iint{\DOTSI\intno@\tw@\FN@\ints@}%
\def\iiint{\DOTSI\intno@\thr@@\FN@\ints@}%
\def\iiiint{\DOTSI\intno@4 \FN@\ints@}%
\def\idotsint{\DOTSI\intno@\z@\FN@\ints@}%
\def\ints@{\findlimits@\ints@@}%
\newif\iflimtoken@
\newif\iflimits@
\def\findlimits@{\limtoken@true\ifx\next\limits\limits@true
 \else\ifx\next\nolimits\limits@false\else
 \limtoken@false\ifx\ilimits@\nolimits\limits@false\else
 \ifinner\limits@false\else\limits@true\fi\fi\fi\fi}%
\def\multint@{\int\ifnum\intno@=\z@\intdots@                          
 \else\intkern@\fi                                                    
 \ifnum\intno@>\tw@\int\intkern@\fi                                   
 \ifnum\intno@>\thr@@\int\intkern@\fi                                 
 \int}
\def\multintlimits@{\intop\ifnum\intno@=\z@\intdots@\else\intkern@\fi
 \ifnum\intno@>\tw@\intop\intkern@\fi
 \ifnum\intno@>\thr@@\intop\intkern@\fi\intop}%
\def\intic@{%
    \mathchoice{\hskip.5em}{\hskip.4em}{\hskip.4em}{\hskip.4em}}%
\def\negintic@{\mathchoice
 {\hskip-.5em}{\hskip-.4em}{\hskip-.4em}{\hskip-.4em}}%
\def\ints@@{\iflimtoken@                                              
 \def\ints@@@{\iflimits@\negintic@
   \mathop{\intic@\multintlimits@}\limits                             
  \else\multint@\nolimits\fi                                          
  \eat@}
 \else                                                                
 \def\ints@@@{\iflimits@\negintic@
  \mathop{\intic@\multintlimits@}\limits\else
  \multint@\nolimits\fi}\fi\ints@@@}%
\def\intkern@{\mathchoice{\!\!\!}{\!\!}{\!\!}{\!\!}}%
\def\plaincdots@{\mathinner{\cdotp\cdotp\cdotp}}%
\def\intdots@{\mathchoice{\plaincdots@}%
 {{\cdotp}\mkern1.5mu{\cdotp}\mkern1.5mu{\cdotp}}%
 {{\cdotp}\mkern1mu{\cdotp}\mkern1mu{\cdotp}}%
 {{\cdotp}\mkern1mu{\cdotp}\mkern1mu{\cdotp}}}%
\def\RIfM@{\relax\protect\ifmmode}
\def\text{\RIfM@\expandafter\text@\else\expandafter\mbox\fi}
\let\nfss@text\text
\def\text@#1{\mathchoice
   {\textdef@\displaystyle\f@size{#1}}%
   {\textdef@\textstyle\tf@size{\firstchoice@false #1}}%
   {\textdef@\textstyle\sf@size{\firstchoice@false #1}}%
   {\textdef@\textstyle \ssf@size{\firstchoice@false #1}}%
   \glb@settings}
\def\textdef@#1#2#3{\hbox{{%
                    \everymath{#1}%
                    \let\f@size#2\selectfont
                    #3}}}
\newif\iffirstchoice@
\def\Let@{\relax\iffalse{\fi\let\\=\cr\iffalse}\fi}%
\def\vspace@{\def\vspace##1{\crcr\noalign{\vskip##1\relax}}}%
\def\multilimits@{\bgroup\vspace@\Let@
 \baselineskip\fontdimen10 \scriptfont\tw@
 \advance\baselineskip\fontdimen12 \scriptfont\tw@
 \lineskip\thr@@\fontdimen8 \scriptfont\thr@@
 \lineskiplimit\lineskip
 \vbox\bgroup\ialign\bgroup\hfil$\m@th\scriptstyle{##}$\hfil\crcr}%
\def\Sb{_\multilimits@}%
\def\endSb{\crcr\egroup\egroup\egroup}%
\def\Sp{^\multilimits@}%
\newdimen\ex@
\def\rightarrowfill@#1{$#1\m@th\mathord-\mkern-6mu\cleaders
 \hbox{$#1\mkern-2mu\mathord-\mkern-2mu$}\hfill
 \mkern-6mu\mathord\rightarrow$}%
\def\leftarrowfill@#1{$#1\m@th\mathord\leftarrow\mkern-6mu\cleaders
 \hbox{$#1\mkern-2mu\mathord-\mkern-2mu$}\hfill\mkern-6mu\mathord-$}%
\def\leftrightarrowfill@#1{$#1\m@th\mathord\leftarrow
\mkern-6mu\cleaders
 \hbox{$#1\mkern-2mu\mathord-\mkern-2mu$}\hfill
 \mkern-6mu\mathord\rightarrow$}%
\def\overrightarrow{\mathpalette\overrightarrow@}%
\def\overrightarrow@#1#2{\vbox{\ialign{##\crcr\rightarrowfill@#1\crcr
 \noalign{\kern-\ex@\nointerlineskip}$\m@th\hfil#1#2\hfil$\crcr}}}%
\def\overleftarrow{\mathpalette\overleftarrow@}%
\def\overleftarrow@#1#2{\vbox{\ialign{##\crcr\leftarrowfill@#1\crcr
 \noalign{\kern-\ex@\nointerlineskip}$\m@th\hfil#1#2\hfil$\crcr}}}%
\def\overleftrightarrow{\mathpalette\overleftrightarrow@}%
\def\overleftrightarrow@#1#2{\vbox{\ialign{##\crcr
   \leftrightarrowfill@#1\crcr
 \noalign{\kern-\ex@\nointerlineskip}$\m@th\hfil#1#2\hfil$\crcr}}}%
\def\underrightarrow{\mathpalette\underrightarrow@}%
\def\underrightarrow@#1#2{\vtop{\ialign{##\crcr$\m@th\hfil#1#2\hfil
  $\crcr\noalign{\nointerlineskip}\rightarrowfill@#1\crcr}}}%
\def\underleftarrow{\mathpalette\underleftarrow@}%
\def\underleftarrow@#1#2{\vtop{\ialign{##\crcr$\m@th\hfil#1#2\hfil
  $\crcr\noalign{\nointerlineskip}\leftarrowfill@#1\crcr}}}%
\def\underleftrightarrow{\mathpalette\underleftrightarrow@}%
\def\underleftrightarrow@#1#2{\vtop{\ialign{##\crcr$\m@th
  \hfil#1#2\hfil$\crcr
 \noalign{\nointerlineskip}\leftrightarrowfill@#1\crcr}}}%
\def\qopnamewl@#1{\mathop{\operator@font#1}\nlimits@}
\let\nlimits@\displaylimits
\def\setboxz@h{\setbox\z@\hbox}
\def\varlim@#1#2{\mathop{\vtop{\ialign{##\crcr
 \hfil$#1\m@th\operator@font lim$\hfil\crcr
 \noalign{\nointerlineskip}#2#1\crcr
 \noalign{\nointerlineskip\kern-\ex@}\crcr}}}}
 \def\rightarrowfill@#1{\m@th\setboxz@h{$#1-$}\ht\z@\z@
  $#1\copy\z@\mkern-6mu\cleaders
  \hbox{$#1\mkern-2mu\box\z@\mkern-2mu$}\hfill
  \mkern-6mu\mathord\rightarrow$}
\def\leftarrowfill@#1{\m@th\setboxz@h{$#1-$}\ht\z@\z@
  $#1\mathord\leftarrow\mkern-6mu\cleaders
  \hbox{$#1\mkern-2mu\copy\z@\mkern-2mu$}\hfill
  \mkern-6mu\box\z@$}
\def\projlim{\qopnamewl@{proj\,lim}}
\def\injlim{\qopnamewl@{inj\,lim}}
\def\varinjlim{\mathpalette\varlim@\rightarrowfill@}
\def\varprojlim{\mathpalette\varlim@\leftarrowfill@}
\def\varliminf{\mathpalette\varliminf@{}}
\def\varliminf@#1{\mathop{\underline{\vrule\@depth.2\ex@\@width\z@
   \hbox{$#1\m@th\operator@font lim$}}}}
\def\varlimsup{\mathpalette\varlimsup@{}}
\def\varlimsup@#1{\mathop{\overline
  {\hbox{$#1\m@th\operator@font lim$}}}}
\def\align{\@verbatim \frenchspacing\@vobeyspaces \@alignverbatim
You are using the "align" environment in a style in which it is not defined.}
\let\csname endalign*\endcsname =\endtrivlist
\def\alignat{\@verbatim \frenchspacing\@vobeyspaces \@alignatverbatim
You are using the "alignat" environment in a style in which it is not defined.}
\let\csname endalignat*\endcsname =\endtrivlist
\def\xalignat{\@verbatim \frenchspacing\@vobeyspaces \@xalignatverbatim
You are using the "xalignat" environment in a style in which it is not defined.}
\let\csname endxalignat*\endcsname =\endtrivlist
\def\gather{\@verbatim \frenchspacing\@vobeyspaces \@gatherverbatim
You are using the "gather" environment in a style in which it is not defined.}
\let\csname endgather*\endcsname =\endtrivlist
\def\multiline{\@verbatim \frenchspacing\@vobeyspaces \@multilineverbatim
You are using the "multiline" environment in a style in which it is not defined.}
\let\csname endmultiline*\endcsname =\endtrivlist
\def\arrax{\@verbatim \frenchspacing\@vobeyspaces \@arraxverbatim
You are using a type of "array" construct that is only allowed in AmS-LaTeX.}
\def\tabulax{\@verbatim \frenchspacing\@vobeyspaces \@tabulaxverbatim
You are using a type of "tabular" construct that is only allowed in AmS-LaTeX.}
\let\csname endarrax*\endcsname =\endtrivlist
\let\csname endtabulax*\endcsname =\endtrivlist
 \def\endequation{%
     \ifmmode\ifinner 
      \iftag@
        \addtocounter{equation}{-1} 
        $\hfil
           \displaywidth\linewidth\@taggnum\egroup \endtrivlist
        \global\tag@false
        \global\@ignoretrue
      \else
        $\hfil
           \displaywidth\linewidth\@eqnnum\egroup \endtrivlist
        \global\tag@false
        \global\@ignoretrue
      \fi
     \else
      \iftag@
        \addtocounter{equation}{-1} 
        \eqno \hbox{\@taggnum}
        \global\tag@false%
        $$\global\@ignoretrue
      \else
        \eqno \hbox{\@eqnnum}
        $$\global\@ignoretrue
      \fi
     \fi\fi
 }
 \newif\iftag@ \tag@false
 \def\TCItag{\@ifnextchar*{\@TCItagstar}{\@TCItag}}
 \def\@TCItag#1{%
     \global\tag@true
     \global\def\@taggnum{(#1)}}
 \def\@TCItagstar*#1{%
     \global\tag@true
     \global\def\@taggnum{#1}}
     \def\tag{\@ifnextchar*{\@tagstar}{\@tag}}
     \def\@tag#1{%
         \global\tag@true
         \global\def\@taggnum{(#1)}}
     \def\@tagstar*#1{%
         \global\tag@true
         \global\def\@taggnum{#1}}
\begin{document}

\title{Marginal density expansions for diffusions and stochastic volatility,
part I: Theoretical Foundations}
\author{J.D. Deuschel, P.K. Friz, A. Jacquier, S. Violante \\
TU Berlin, TU and WIAS\ Berlin, TU\ Berlin, Imperial College}
\maketitle

\begin{abstract}
Density expansions for hypoelliptic diffusions $\left( X^{1},\dots
,X^{d}\right) $ are revisited. In particular, we are interested in density
expansions of the projection $\left( X_{T}^{1},\dots ,X_{T}^{l}\right) $, at
time $T>0$, with $l\leq d$. Global conditions are found which replace the
well-known "not-in-cutlocus" condition known from heat-kernel asymptotics.
Our small noise expansion allows for a "second order" exponential factor.
As application, new light is shed on the Takanobu--Watanabe expansion
of Brownian motion and L\'evy's stochastic area. Further applications 
include tail and implied volatility asymptotics in some stochastic volatility 
models, discussed in the compagnion paper \cite{DFJVpartII}.

\textbf{Keywords:} Laplace method on Wiener space, generalized density
expansions in small noise and small time, sub-Riemannian geometry with
drift, focal points, L\'evy's stochastic area, Brownian motion on the
Heistenberg group, stochastic volatility.
\end{abstract}

\section{Introduction}

Given a multi-dimensional hypoelliptic diffusion process $\mathrm{X}%
_{t}=\left( X_{t}^{1},\dots ,X_{t}^{d}:t\geq 0\right) $, started at $\mathrm{%
X}_{0}=\mathrm{x}_{0}$, we are interested in the behaviour of the
probability density function $f=f\left( \mathrm{y}\,,t\right) $ of the
projected (in general non-Markovian) process%
\begin{equation*}
\mathrm{Y}_{t}:=\Pi _{l}\circ \mathrm{X}_{t}:=\left( X_{t}^{1},\dots
,X_{t}^{l}\right) ,\,\,\,1\leq l\leq d\text{.}
\end{equation*}%
Both short time asymptotics and tail asymptotics, in presence of some
scaling, can be derived from the small noise problem
\begin{equation*}
d\mathrm{X}_{t}^{\varepsilon }=b\left( \varepsilon ,\mathrm{X}%
_{t}^{\varepsilon }\right) dt+\varepsilon \sigma \left( \mathrm{X}%
_{t}^{\varepsilon }\right) dW_{t},\quad \text{with }\mathrm{X}%
_{0}^{\varepsilon }=\mathrm{x}_{0}^{\varepsilon }\in \mathbb{R}^{d}.
\end{equation*}%
Our main technical result, based on the Laplace method on Wiener space
following Azencott, Bismut and, in particular, Ben Arous \cite{Benarous,
Benarous2} is a density expansion for $\mathrm{Y}_{t}^{\varepsilon }:=\Pi
_{l}\circ \mathrm{X}_{t}^{\varepsilon }$ of the form, for $\mathrm{x}_{0},%
\mathrm{y},T$ fixed,
\begin{equation}
\text{ }f^{\varepsilon }\left( \mathrm{y},T\right) =e^{-c_{1}/\varepsilon
^{2}}e^{c_{2}/\varepsilon }\varepsilon ^{-l}\left( c_{0}+O\left( \varepsilon
\right) \right) \text{ as }\varepsilon \downarrow 0.  \label{feps_intronew}
\end{equation}%
Leaving definitions and precise statements to the main text below (cf
theorem \ref{thm:MainThm}) let us briefly mention our key assumptions\newline
(i) a strong H\"{o}rmander condition at all points (or in fact, a weak H\"{o}%
rmander condition at $\mathrm{x}_{0}$ and an explicit controllability
condition);\newline
(ii) existence of at most finitely many minimizers in the control problem
which govern the leading order behaviour;\newline
(iii)\ invertibility of the deterministic Malliavin covariance matrix at the
minimizers;\newline
(iv) a global condition on $\mathrm{x}_{0}\in \mathbb{R}^{d},\,\mathrm{y}\in
\mathbb{R}^{l}$ which we call \textit{non-focality,} motivated from
terminology in Riemannian geometry.

Conditions (i)-(iii) will not surprise the reader familiar with the works
\cite{Benarous, Benarous2, BAL91, TW}. However, condition (iv)\footnote{%
More precisely, $\mathrm{x}_{0}\in \mathbb{R}^{d}$ must not be focal for the
submanifold $N_{y}:=$ $\left( \mathrm{y,\cdot }\right) \subset \mathbb{R}%
^{d} $. The classical example here is of course $\left( 0,0\right) \in
\mathbb{R}^{2}$ which is focal for the unit circle $S^{1}\subset \mathbb{R}%
^{2}$.} which guarantees \textit{non-degeneracy} of the minimizers (cf.
proposition \ref{BA1_18}), appears to be new in the context of density
expansions, to the best of our knowledge, even in the elliptic case. It
forms the essence of what is needed to extend the well-known \textit{%
point-point} concept of \textit{non-conjugacy }(crucial part of the "$\notin
$\textit{\ cut-locus condition" }familiar from heat kernel expansions) to a
(sub-Riemannian, with drift) \textit{point-subspace} setting. A simple
(elliptic) example where (iv) \textit{and} (\ref{feps_intronew}) fails, is given in
section \ref{EEwD}. A similar situation arises in the (hypoelliptic) example
of Brownian motion and L\'evy area, see section \ref{LevyAreaSection},
where we recover (and then extend) some expansions previously 
derived by Takanobu--Watanabe \cite{TW}.  We emphasize that our applications,
notably those discussed in \cite{DFJVpartII},
require us to introduce and characterize non-focality in a control-theoretic
generality; cf. section \ref{SectionNonFocality}. (The reader may still be
interested to consult geometry text books such as \cite{BiGr, Sa} or \cite[%
Section 4, p. 227-229]{doC} for more information on focality in the
Riemannian setting.)

As far as the expansion (\ref{feps_intronew}) is concerned, we draw
attention to the (in the context of density expansions) somewhat unusual
\textit{second order exponential factor} present when $c_{2}\neq 0$. As was
understood in the context of the general Laplace method on Wiener space,
\cite{Azencott, Az2, Benarous2}, this has to do with allowing the drift
vector field $b$ (and in the present paper also: the starting point) to
depend on $\varepsilon $ at first order; the special case that arises from
considering short time asymptotics - the small noise parameter $\varepsilon $
is then introduced by Brownian scaling - always leads to $c_{2}=0$. It is
interesting to note that the work of Kusuoka--Stroock \cite{KS}, concerning
precise asymptotics for Wiener functionals (in the small noise limit), see
also \cite{O, KO} for recent applications to projected diffusions, was set
up as an expansion in $\varepsilon ^{2}$. This is enough to cover the model
case of short time expansions, but cannot yield an expansion of the type (%
\ref{feps_intronew}) with $c_{2}\neq 0$. A similar remark applies to the
small noise expansions for projected diffusions due to Takanobu--Watanabe
\cite{TW}.

Density expansions of diffusions in the small noise regime seem to go back
(at least) to \cite{Ki}; density expansions for projected diffusions in the
small noise regime (which include the short time regime), with applications
to implied volatility expansions, were recently considered by Y. Osajima
\cite{O}, based on work with S. Kusuoka \cite{KO}. We partially improve on
these results. First, as was already mentioned, $c_{2}=0$ in these works
whereas expansions with $c_{2}\neq 0$ are crucial in understanding the tail
behaviour of certain stochastic volatility models, \cite{DFJVpartII} (see
also \cite{GuSt, FGGS}). Additionally, in comparison with \cite{O} we do not
assume $x_{0}$ near $\left( y,\cdot \right) $, nor ellipticity of the
problem.\ In further contrast to (the general results in) \cite{KO, KS} we
provide a checkable, finite-dimensional criterion that guarantees that the
crucial infinite-dimensional non-degeneracy assumption, left as such in \cite%
{KO, KS}, is actually satisfied. On the other hand, these authors give
somewhat explicit formulae for $c_{0}$ which we (presently) do not.

Finally, our expansion (\ref{feps_intronew}) leads to short time expansion
for projected diffusion densities, under global conditions on $\left(
\mathrm{x}_{0},\mathrm{y}\right) $, of the form
\begin{equation}
f\left( \mathrm{y},t\right) \sim c_{0}\left( \mathrm{x}_{0},\mathrm{y}%
\right) \frac{1}{t^{l/2}}\exp \left( -\frac{d^{2}\left( \mathrm{x}_{0},%
\mathrm{y}\right) }{2t}\right) \text{ as }t\downarrow 0.
\label{fshort_intronew}
\end{equation}%
When $l=d$, such expansions go back to classical works ranging from
Molchanov \cite{Mo75} to Ben Arous \cite{Benarous}. The leading order
behaviour $2t\log f\left( \mathrm{y},t\right) \sim -d^{2}\left( \mathrm{x}%
_{0},\mathrm{y}\right) $ is due to Varadhan \cite{Va67}. The case $l<d$, in
particular our global condition on $\left( \mathrm{x}_{0},\mathrm{y}\right) $%
, appears to be new. That said, expansions of this form have appeared in
\cite{TW, HLW, O}; the last two references aimed at implied volatility
expansions. In the context of time homogenous local volatility models ($%
l=d=1 $), the expansion (\ref{fshort_intronew}) holds trivially without any
conditions on $\left( \mathrm{x}_{0},\mathrm{y}\right) $; the resulting
expansion was derived (with explicit constant $c_{0}$) in \cite{GaEtAl}.
Subject to mild technical conditions on the diffusion coefficient, they show
how to deduce first a call price and then an implied volatility expansion in
the short time (to maturity) regime%
\begin{equation*}
\sigma _{BS}\left( k,t\right) =|k|/d\left( \mathrm{x}_{0},k\right) +c\left(
\mathrm{x}_{0},k\right) t+O\left( t^{2}\right) \text{ as }t\downarrow 0;
\end{equation*}%
where $d\left( \mathrm{x}_{0},k\right) $ is a point-point distance and $%
c\left( \mathrm{x}_{0},k\right) $ is explicitly given; $k$ is $\log $-strike
(better: log-forward in moneyness). The celebrated
Berestycki--Busca--Florent (BBF) formula \cite{BBF} asserts that $\sigma
_{BS}\left( k,t\right) \sim |k|/d\left( \mathrm{x}_{0},k\right) $ as $%
t\downarrow 0$, and is in fact valid in generic stochastic volatility
models, $d\left( \mathrm{x}_{0},k\right) $ is then understood as a
point-hyperplane distance. In fact, $|k|/d\left( \mathrm{x}_{0},k\right) $
arose as the initial condition of a non-linear evolution equation for the
entire implied volatility surface. As briefly indicated in \cite[Sec 6.3]%
{BBF} this can be used for a Taylor expansion of $\sigma _{BS}\left(
k,t\right) $ in $t$. Such expansions have also been discussed, based on heat
kernel expansions on Riemannian manifolds by \cite{BC, HL, Pau}, not always
in full mathematical rigor. Some mathematical results are given in \cite{O},
assuming ellipticity and \textit{close-to-the-moneyness} $\left\vert
k\right\vert <<1$; see also forthcoming work by Ben Arous--Laurence \cite%
{BenArousLaurence}. We suspect that our formula (\ref{fshort_intronew}),
potentially applicable far-from-the-money, will prove useful in this context
and shall return to this in future work.

It should be noted that the BBF formula alone can be obtained from soft
large deviation arguments, cf. \cite[Sec. 3.2.1]{Ph} and the references
therein. In a similar spirit, cf. \cite[Sec 5, Rmk 2.9]{VLec}, the Varadhan-type formula $2t\log f\left(
\mathrm{y},t\right) \sim -d^{2}\left( \mathrm{x}_{0},\mathrm{y}\right) $,
when $l<d$, can be shown, without any conditions on $(\mathrm{x}_{0},\mathrm{%
y})$ by large deviation methods, only relying on the existence of a
reasonable density.  \footnote{Care is necessary, however, if one returns to the hypoellipitic 
small noise with drift setting; even when $l=d$, the Varadhan type formula, 
$ \varepsilon^2 \log f^{\varepsilon }\left( \mathrm{y},T\right) \sim -c_1 \text{ as } \varepsilon \to 0$,  
in the notation of (\ref{feps_intronew}), may fail to hold true, see \cite{BALII}.}

As a final note, we recall that the (in general, non-Markovian) $\mathbb{R}%
^{l}$-valued It\^{o}-process $\left( \mathrm{Y}_{t}:t\geq 0\right) $ admits
- subject to some technical assumptions \cite{Gy, Pi} - a \textit{Markovian
(or Gy\"{o}ngy) projection}. That is, a time-inhomogeneous Markov diffusion $%
(\mathrm{\tilde{Y}}_{t}:t\geq 0)$ with matching time-marginals i.e $\mathrm{Y%
}_{t}=\mathrm{\tilde{Y}}_{t}$ \ (in law) for every fixed $t\geq 0$. In a
financial context, when $l=1$, this process is known as the (Dupire) local
volatility model and various authors \cite{BBF, BC, HL, BenArousLaurence}
have used this as an important intermediate step in computing implied
volatility in stochastic volatility models. Since all our expansions (small
noise, tail, short time ) are relative to such time-marginals they may also
be viewed as expansions for the corresponding Markovian projections.

\bigskip

\ \textbf{Acknowledgement:} JDD and AJ acknowledge (partial resp. full)
financial support from MATHEON. PKF\ acknowledges partial support from
MATHEON\ and the European Research Council under the European Union's
Seventh Framework Programme (FP7/2007-2013) / ERC grant agreement nr.
258237. PKF would like to thank G. Ben Arous for pointing out conceptual
similarities in \cite{FGGS, Benarous} and several discussions thereafter. It
is also a pleasure to thank F. Baudoin, J.P. Gauthier, A. Gulisashvili and
P. Laurence for their interest and feedback.

\section{The main result and its corollaries}

\bigskip Consider a $d$-dimensional diffusion $\left( \mathrm{X}%
_{t}^{\varepsilon }\right) _{t\geq 0}$ given by the stochastic differential
equation
\begin{equation}
d\mathrm{X}_{t}^{\varepsilon }=b\left( \varepsilon ,\mathrm{X}%
_{t}^{\varepsilon }\right) dt+\varepsilon \sigma \left( \mathrm{X}%
_{t}^{\varepsilon }\right) dW_{t},\quad \text{with }\mathrm{X}%
_{0}^{\varepsilon }=\mathrm{x}_{0}^{\varepsilon }\in \mathbb{R}^{d}
\label{SDEXeps}
\end{equation}%
and where $W=(W^{1},\dots ,W^{m}$) is an $m$-dimensional Brownian motion.
Unless otherwise stated, we assume $b:[0,1)\times \mathbb{R}^{d}\rightarrow
\mathbb{R}^{d},$ $\sigma =\left( \sigma _{1},\dots ,\sigma _{m}\right) :%
\mathbb{R}^{d}\rightarrow \mathrm{Lin}\left( \mathbb{R}^{m}\rightarrow
\mathbb{R}^{d}\right) $ and $\mathrm{x}_{0}^{\cdot }:[0,1)\rightarrow
\mathbb{R}^{d}$ to be smooth, bounded with bounded derivatives of all
orders. Set $\sigma _{0}=b\left( 0,\cdot \right) $ and assume that, for
every multiindex $\alpha $, the drift vector fields $\left\{ b\left(
\varepsilon ,\cdot \right) :\varepsilon >0\right\} $ converge to $\sigma
_{0} $ in the sense\footnote{%
If (\ref{SDEXeps}) is understood in Stratonovich sense, so that $dW$ is
replaced by $\circ dW$, the drift vector field $b\left( \varepsilon ,\cdot
\right) $ is changed to $\tilde{b}\left( \varepsilon ,\cdot \right) =b\left(
\varepsilon ,\cdot \right) -\left( \varepsilon ^{2}/2\right)
\sum_{i=1}^{m}\sigma _{i}\cdot \partial \sigma _{i}$. In particular, $\sigma
_{0}$ is also the limit of $\tilde{b}\left( \varepsilon ,\cdot \right) $ in
the sense of (\ref{bepsTob}) .}%
\begin{equation}
\partial _{x}^{\alpha }b\left( \varepsilon ,\cdot \right) \rightarrow
\partial _{x}^{\alpha }b\left( 0,\cdot \right) =\partial _{x}^{\alpha
}\sigma _{0}\left( \cdot \right) \text{ uniformly on compacts as }%
\varepsilon \downarrow 0\text{.}  \label{bepsTob}
\end{equation}%
We shall also assume that%
\begin{equation}
\partial _{\varepsilon }b\left( \varepsilon ,\cdot \right) \rightarrow
\partial _{\varepsilon }b\left( 0,\cdot \right) \text{ \ uniformly on
compacts as }\varepsilon \downarrow 0  \label{bepsC1}
\end{equation}%
and (one-sided) differentiability of the starting point in $\varepsilon $,%
\begin{equation}
\mathrm{x}_{0}^{\varepsilon }=\mathrm{x}_{0}+\varepsilon \mathrm{\hat{x}}%
_{0}+o\left( \varepsilon \right) \text{ as }\varepsilon \downarrow 0\text{. }
\label{x0eps_ass}
\end{equation}

Leaving applications to stochastic volatility models to \cite{DFJVpartII},
the main result of this paper is a density expansion in $\varepsilon $ of
the $\mathbb{R}^{l}$-valued projection\footnote{%
While $\left( \mathrm{X}_{t}^{\varepsilon }:t\geq 0\right) $ is Markovian,
this will not be true, in general, for the projected process $\left( \mathrm{%
Y}_{t}^{\varepsilon }:t\geq 0\right) $; as a consequence the probability
density function of $\mathrm{Y^\varepsilon_T}$ cannot be analyzed directly via Kolmogorov's
forward PDE.}
\begin{equation*}
\mathrm{Y}_{T}^{\varepsilon }:=\Pi _{l}\circ \mathrm{X}_{T}^{\varepsilon
}:=\left( X_{T}^{\varepsilon ,1},\dots ,X_{T}^{\varepsilon ,l}\right) \in
\mathbb{R}^{l};
\end{equation*}%
\bigskip where $\Pi _{l}$ denotes the projection $\left( x^{1},\dots
,x^{d}\right) \mapsto \left( x^{1},\dots ,x^{l}\right) $, for fixed $l\in
\left\{ 1,\dots ,d\right\} $ and $T>0$. Of course, we need to guarantee that
$\mathrm{Y}_{T}^{\varepsilon }$ indeed admits a density. We make the \textit{%
standing assumption} that the \textit{weak H\"{o}rmander condition} holds at
$\mathrm{x}_{0}$,%
\begin{equation}
\text{\textrm{span}}\left[ \sigma _{i}:1\leq i\leq m;\text{ }\left[ \sigma
_{j},\sigma _{k}\right] :0\leq j,k\leq m;...\right] _{\mathrm{x}_{0}}=%
\mathcal{T}_{\mathrm{x}_{0}}\mathbb{R}^{d};  \tag{H}  \label{H}
\end{equation}%
that is, the linear span of $\sigma _{1},\dots ,\sigma _{m}$ and all Lie
brackets of $\sigma _{0},\sigma _{1},\dots ,\sigma _{m}$ at the starting
point is full. Since this condition is "open" it also holds, thanks to (\ref%
{bepsTob}), for $\varepsilon >0$ small enough, with $\sigma _{0}$ and $%
\mathrm{x}_{0}$ replaced by ${b}\left( \varepsilon ,\cdot \right) $ (or $%
\tilde{b}\left( \varepsilon ,\cdot \right) $, cf. previous footnote) and $%
\mathrm{x}_{0}^{\varepsilon }$, respectively. It then is a classical result
(due to H\"{o}rmander, Malliavin; see e.g. \cite{Nu}) that the $\mathbb{R}%
^{d}$-valued r.v. $\mathrm{X}_{T}^{\varepsilon }$ admits a (smooth) density
for all times $T>0$ and so does its $\mathbb{R}^{l}$-valued projection $%
\mathrm{Y}_{T}^{\varepsilon }$. We denote the probability density of $%
\mathrm{Y}_{T}^{\varepsilon }$ by %
$
f^{\varepsilon }\left( \cdot ,T\right) \equiv f^{\varepsilon }\left( \mathrm{%
y},T\right)$ with $\mathrm{y}\in \mathbb{R}^{l}$. In theorem \ref{thm:MainThm} below, where we expand
$$ f^{\varepsilon }\left( \mathrm{%
y},T\right)|_{\mathrm{y}=\mathrm{a}} = f^{\varepsilon }\left( \mathrm{%
a},T\right)
$$ in $\varepsilon$ for some fixed $\mathrm{a}\in \mathbb{R}^{l}$,
a crucial assumption is that $\mathcal{K}_{%
\mathrm{a}}$, defined by

\begin{equation}
\mathcal{K}_{\mathrm{x}_{0},T;\mathrm{a}}=\mathcal{K}_{\mathrm{a}}:=\left\{
\mathrm{h}\in H:\Pi _{l}\circ \phi _{T}\left( h\right) =\mathrm{a}\right\},
\end{equation}%
is non-empty. Here, $H$ denotes the Cameron-Martin space, i.e. absolutely continuous paths
with derivative in $L^{2}\left( \left[ 0,T\right] ,\mathbb{R}^{m}\right) $,
with norm given by
\begin{equation*}
\left\Vert \mathrm{h}\right\Vert _{H}^{2}=\sum_{i=1}^{m}\int_{0}^{T}|\dot{h}%
_{t}^{i}|^{2}dt,\,\,\,\,\,\,\,\forall \mathrm{h}=\left( h^{1},\dots
,h^{m}\right) \in H.
\end{equation*}%
We write $\phi _{T}\left( \mathrm{h}\right) $ for the time-$T$ solution to
the controlled ordinary differential equation (e.g. \cite[Sec.3]{FV})%
\begin{equation}
d\phi _{t}^{\mathrm{h}}=\sigma _{0}\left( \phi _{t}^{\mathrm{h}}\right)
dt+\sum_{i=1}^{m}\sigma _{i}\left( \phi _{t}^{\mathrm{h}}\right)
dh_{t}^{i},\,\,\phi _{0}^{\mathrm{h}}=\mathrm{x}_{0}\in \mathbb{R}^{d}.
\label{dphih}
\end{equation}%
At occasions, we emphasize the starting point $\mathrm{x}_{0}$ by writing $%
\phi _{T}^{\mathrm{h}}\left( \mathrm{x}_{0}\right) $, and note that $\phi
_{T}\left( \mathrm{h}\right) =\phi _{T}^{\mathrm{h}}\left( \mathrm{\cdot }%
\right) $, i.e. the mappig $\mathrm{x}_{0}\in \mathbb{R}^{d}\mapsto $ $\phi
_{T}^{\mathrm{h}}\left( \mathrm{x}_{0}\right) \in \mathbb{R}^{d}$, is a
diffeomorphism. Similarly, we write $\phi _{T\leftarrow t}\left( \mathrm{h}%
\right) $ for the time-$T$ solution, but started at time $t$. Each such $%
\phi _{T\leftarrow t}\left( \mathrm{h}\right) $ is again a diffeomorphism,
and we denote its differential by $\Phi _{T\leftarrow t}\left( \mathrm{h}%
\right) $. A well-known sufficient condition for $\mathcal{K}_{\mathrm{a}%
}\neq \varnothing $ \ is the \textit{strong H\"{o}rmander condition} (at all
points)
\begin{equation}
\forall \mathrm{x}\in \mathbb{R}^{d}:\text{\textrm{Lie}}\left[ \sigma
_{1},\dots ,\sigma _{m}\right] |_{\mathrm{x}}=\mathcal{T}_{\mathrm{x}}%
\mathbb{R}^{d}\cong \mathbb{R}^{d};  \tag{H1}  \label{H1}
\end{equation}%
see \cite[p.106]{Ju2} or \cite[p.441]{IW},\cite{Ku} for instance.\footnote{%
A weak H\"{o}rmander type condition which ensures $\mathcal{K}_{\mathrm{a}%
}\neq \varnothing $ is found in \cite{Ju}.}

\bigskip Whenever $\mathcal{K}_{\mathrm{a}}\neq \varnothing $, it makes
sense to define the \textit{energy} and the set of \textit{minimizers}
\begin{eqnarray}
\Lambda _{\mathrm{x}_{0},T}\left( \mathrm{a}\right) &:&=\Lambda \left(
\mathrm{a}\right) :=\inf \left\{ \frac{1}{2}\Vert \mathrm{h}\Vert _{H}^{2}:%
\mathrm{h}\in \mathcal{K}_{\mathrm{a}}\right\} ,  \label{Lambdax0y} \\
\mathcal{K}_{\mathrm{a}}^{\min } &:&=\left\{ \mathrm{h}_{0}\in \mathcal{K}_{%
\mathrm{a}}:\frac{1}{2}\Vert \mathrm{h}_{0}\Vert _{H}^{2}=\Lambda \left(
\mathrm{a}\right) \right\} .  \notag
\end{eqnarray}

In words, $\Lambda \left( \mathrm{a}\right) $ is the minimal energy required
to go in time $T$ from $\mathrm{x}_{0}\in \mathbb{R}^{d}$ to the "target"
submanifold%
\begin{equation*}
N:=N_{\mathrm{a}}:=\left\{ \mathrm{x}\in \mathbb{R}^{d}:\Pi _{l}\left(
\mathrm{x}\right) =\left( x^{1},\dots ,x^{l}\right) =\mathrm{a}\right\} .
\end{equation*}%
Elements of $\mathcal{K}_{\mathrm{a}}^{\min }$ will be called \textit{%
minimizers} or \textit{minimizing controls}. A standard weak-compactness
argument (e.g. \cite[Thm 1.14]{Bismut}) shows that $\mathcal{K}_{\mathrm{a}%
}\neq \varnothing $ already implies that $\mathcal{K}_{\mathrm{a}}^{\min }$
is non-empty. (Throughout the paper, we shall only be concerned with the
situation that $\mathcal{K}_{\mathrm{a}}^{\min }$ contains one or finitely
many minimizers.)

It will be crucial that $\mathcal{K}_{\mathrm{a}}$ enjoys a (Hilbert)
manifold structure, locally around (each) $\mathrm{h}_{0}\in \mathcal{K}_{%
\mathrm{a}}^{\min }$ . Following Bismut \cite[Thm 1.5]{Bismut} this can be
guaranteed by assuming invertibility of $C_{\mathrm{x}_{0},T;\mathrm{a}%
}\left( \mathrm{h}_{0}\right) $, the deterministic Malliavin matrix given by%
\begin{equation*}
C_{\mathrm{x}_{0},T;\mathrm{a}}\left( \mathrm{h}\right) :=C\left( \mathrm{h}%
\right) :=\left\langle D\phi _{T}\left( \mathrm{h}\right) ,D\phi _{T}\left(
\mathrm{h}\right) \right\rangle _{H}\in \mathrm{Lin}\left( \mathcal{T}_{%
\mathrm{x}_{T}}^{\ast }\mathbb{R}^{d}\rightarrow \mathcal{T}_{\mathrm{x}_{T}}%
\mathbb{R}^{d}\right) \cong \mathbb{R}^{d\times d}
\end{equation*}%
where $\mathrm{x}_{T}:=\phi _{T}\left( \mathrm{h}\right) $ and $D$ will
always denote the ($H$-valued) Fr\'{e}chet derivative of some function
depending on $\mathrm{h}\in H$. We can also view $C\left( \mathrm{h}\right) $
as (positive semi-definite) quadratic form on $\mathcal{T}_{\mathrm{x}%
_{T}}^{\ast }\mathbb{R}^{d}$, in coordinates\footnote{%
Einstein's summation convention is used whenever convenient.}, with $\mathrm{%
p}=p_{i}dx^{i}\in \mathcal{T}_{\mathrm{x}_{T}}^{\ast }\mathbb{R}^{d}$,%
\begin{equation*}
\left\langle C\left( \mathrm{h}\right) \mathrm{p},\mathrm{p}\right\rangle
=\left\Vert \left\langle \mathrm{p},D\phi _{T}\left( \mathrm{h}\right)
\right\rangle \right\Vert _{H}^{2}=\left\Vert \sum_{i=1}^{d}p_{i}D\phi
_{T}^{i}\left( \mathrm{h}\right) \right\Vert _{H}^{2}\text{ . }
\end{equation*}%
In fact, large parts of our analysis only rely on non-degeneracy of $C\left(
\mathrm{h}_{0}\right) $ restricted to $\mathbb{R}^{l\times l}$ but we find
it more convenient to deal with the "full" matrix $C\left( \mathrm{h}%
_{0}\right) $. A sufficient condition for "$C\left( \mathrm{h}\right) $ is invertible for
every $\mathrm{h}\neq 0$" in a strictly sub-elliptic setting is given as
condition \textrm{(H2)}\ by \cite{Bismut}; although much stronger than H\"{o}%
rmander's condition, it does apply to examples such as the $3$-dimensional
Heisenberg group and thus L\'evy's area, cf. section \ref{LevyAreaSection}.
Most financial applications, as discussed in \cite{DFJVpartII}, are actually
locally elliptic, or "almost elliptic", and are  covered by the following condition.

\begin{proposition}
Assume $\mathrm{h}\in H$ and the condition%
\begin{equation*}
\exists t\in \left[ 0,T\right] :\mathrm{span}\left[ \sigma _{1},\dots
,\sigma _{m}\right] |_{\mathrm{x}_{t}}=\mathcal{T}_{\mathrm{x}_{t}}\mathbb{R}%
^{d}
\end{equation*}%
where $\mathrm{x}_{t}:=\phi _{t}\left( \mathrm{h}\right) $. Then $C\left(
\mathrm{h}\right) $ is invertible.
\end{proposition}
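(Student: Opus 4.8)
The plan is to upgrade the positive \emph{semi}-definiteness of $C(\mathrm{h})$ to strict positive definiteness, which is equivalent to invertibility since $C(\mathrm{h})$ is a symmetric $d\times d$ matrix. Write $\mathrm{x}_s=\phi_s(\mathrm{h})$ and linearise the controlled ODE (\ref{dphih}) in $\mathrm{h}$: by the variation-of-constants formula, the homogeneous part of the linearised equation has fundamental solution equal to the flow Jacobian $\Phi_{T\leftarrow s}(\mathrm{h})$, so for every $\mathrm{k}\in H$,
\begin{equation*}
D\phi_T(\mathrm{h})[\mathrm{k}]=\int_0^T\Phi_{T\leftarrow s}(\mathrm{h})\,\sigma(\mathrm{x}_s)\,\dot k_s\,ds ,
\end{equation*}
where $\sigma=(\sigma_1,\dots,\sigma_m)$ is read as a $d\times m$ matrix. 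Pairing with $\mathrm{p}=p_i\,dx^i\in\mathcal{T}^\ast_{\mathrm{x}_T}\mathbb{R}^d$ and moving adjoints inside the integral, the Riesz representative in $H$ of the functional $\langle\mathrm{p},D\phi_T(\mathrm{h})\rangle$ is the Cameron--Martin path whose derivative at time $s$ is $\sigma(\mathrm{x}_s)^\ast\,\Phi_{T\leftarrow s}(\mathrm{h})^\ast\,\mathrm{p}\in\mathbb{R}^m$. Combined with the identity $\langle C(\mathrm{h})\mathrm{p},\mathrm{p}\rangle=\|\langle\mathrm{p},D\phi_T(\mathrm{h})\rangle\|_H^2$ recalled just above the statement, this gives the explicit quadratic form
\begin{equation*}
\langle C(\mathrm{h})\mathrm{p},\mathrm{p}\rangle=\int_0^T\bigl|\sigma(\mathrm{x}_s)^\ast\,\Phi_{T\leftarrow s}(\mathrm{h})^\ast\,\mathrm{p}\bigr|_{\mathbb{R}^m}^2\,ds .
\end{equation*}

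Next I would argue by contradiction. If $C(\mathrm{h})$ is singular, choose $\mathrm{p}\neq 0$ with $\langle C(\mathrm{h})\mathrm{p},\mathrm{p}\rangle=0$; then the non-negative integrand above vanishes for almost every $s\in[0,T]$. Since $s\mapsto\mathrm{x}_s$ and $s\mapsto\Phi_{T\leftarrow s}(\mathrm{h})$ are continuous on $[0,T]$ — both solve ODEs driven by the absolutely continuous path $\mathrm{h}$, hence are themselves absolutely continuous in $s$ — and $\sigma$ is smooth, the map $s\mapsto\sigma(\mathrm{x}_s)^\ast\,\Phi_{T\leftarrow s}(\mathrm{h})^\ast\,\mathrm{p}$ is continuous, so ``a.e.'' improves to ``everywhere''; in particular, evaluating at the distinguished time $t$ from the hypothesis, $\sigma(\mathrm{x}_t)^\ast\,\Phi_{T\leftarrow t}(\mathrm{h})^\ast\,\mathrm{p}=0$.

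It remains to read off $\mathrm{p}=0$. The assumed span condition $\mathrm{span}[\sigma_1,\dots,\sigma_m]|_{\mathrm{x}_t}=\mathcal{T}_{\mathrm{x}_t}\mathbb{R}^d$ says precisely that $\sigma(\mathrm{x}_t)\colon\mathbb{R}^m\to\mathbb{R}^d$ is surjective, equivalently that the adjoint $\sigma(\mathrm{x}_t)^\ast$ is injective; hence $\Phi_{T\leftarrow t}(\mathrm{h})^\ast\mathrm{p}=0$. But $\phi_{T\leftarrow t}(\mathrm{h})$ is a diffeomorphism, so $\Phi_{T\leftarrow t}(\mathrm{h})$ and thus $\Phi_{T\leftarrow t}(\mathrm{h})^\ast$ are invertible, forcing $\mathrm{p}=0$ — a contradiction. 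Therefore $\langle C(\mathrm{h})\mathrm{p},\mathrm{p}\rangle>0$ for all $\mathrm{p}\neq 0$, i.e. $C(\mathrm{h})$ is invertible. The routine but essential ingredients are the linearisation/variation-of-constants identity and the fact that the flow and its Jacobian depend continuously on the time parameter even though $\mathrm{h}$ has only an $L^2$ derivative; the one genuinely delicate-looking point — upgrading the almost-everywhere vanishing to vanishing at the single good time $t$ — reduces to exactly this continuity, after which the linear-algebra conclusion is immediate.
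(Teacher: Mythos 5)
Your proof is correct and takes essentially the same route as the paper: both derive the explicit quadratic form $\left\langle C(\mathrm{h})\mathrm{p},\mathrm{p}\right\rangle=\int_0^T\sum_j\left\langle(\Phi_{T\leftarrow s}(\mathrm{h}))^\ast\mathrm{p},\sigma_j(\mathrm{x}_s)\right\rangle^2 ds$, observe that vanishing of this integral forces $(\Phi_{T\leftarrow t}(\mathrm{h}))^\ast\mathrm{p}$ to annihilate $\sigma_1(\mathrm{x}_t),\dots,\sigma_m(\mathrm{x}_t)$, and conclude $\mathrm{p}=0$ from the span hypothesis together with invertibility of the cotangent flow. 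The only difference is cosmetic: you upgrade ``a.e.\ $s$'' to ``every $s$'' via continuity of the integrand and then evaluate at the distinguished time $t$, whereas the paper instead spreads the span condition to a small open interval around $t$ and uses a.e.\ vanishing there --- both handle the same null-set subtlety and both are fine.
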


\begin{proof}
We have the well-known formula (e.g. \cite[(4.1)]{TW}), for any $\,\mathrm{k}%
=\left( k^{1},\dots ,k^{m}\right) \in H,$
\begin{equation*}
\left\langle D\phi _{T}\left( \mathrm{h}\right) ,\mathrm{k}\right\rangle
_{H}=D\phi _{T}\left( \mathrm{h}\right) \left[ \mathrm{k}\right]
=\int_{0}^{T}\sum_{j=1}^{m}\Phi _{T\leftarrow t}\left( \mathrm{h}\right)
\sigma _{j}\left( \mathrm{x}_{t}\right) \dot{k}_{t}^{j}dt\in \mathcal{T}_{%
\mathrm{x}_{T}}\mathbb{R}^{d}\,\,
\end{equation*}%
When pairing this with $\mathrm{p}=p_{i}dx^{i}\in \mathcal{T}_{\mathrm{x}%
_{T}}^{\ast }\mathbb{R}^{d}$, we have%
\begin{equation*}
\left\langle \left\langle \mathrm{p},D\phi _{T}\left( \mathrm{h}\right)
\right\rangle ,\mathrm{k}\right\rangle
_{H}=\int_{0}^{T}\sum_{j=1}^{m}\left\langle \mathrm{p},\Phi _{T\leftarrow
t}\left( \mathrm{h}\right) \sigma _{j}\left( \mathrm{x}_{t}\right)
\right\rangle \dot{k}_{t}^{j}dt\in \mathcal{T}_{\mathrm{x}_{T}}\mathbb{R}^{d}
\end{equation*}%
and it easily follows that $\left\langle C\left( \mathrm{h}\right) \mathrm{p}%
,\mathrm{p}\right\rangle $ is given by
\begin{equation*}
\left\Vert \left\langle \mathrm{p},D\phi _{T}\left( \mathrm{h}\right)
\right\rangle \right\Vert _{H}^{2}=\int_{0}^{T}\sum_{j=1}^{m}\left\langle
\mathrm{p},\Phi _{T\leftarrow t}\left( \mathrm{h}\right) \sigma _{j}\left(
\mathrm{x}_{t}\right) \right\rangle
^{2}dt=\int_{0}^{T}\sum_{j=1}^{m}\left\langle \left( \Phi _{T\leftarrow
t}\left( \mathrm{h}\right) \right) ^{\ast }\mathrm{p},\sigma _{j}\left(
\mathrm{x}_{t}\right) \right\rangle ^{2}dt.
\end{equation*}%
Assume now $\left\langle C\left( \mathrm{h}\right) \mathrm{p},\mathrm{p}%
\right\rangle =0$. By assumption \textrm{span}$\left[ \sigma _{1},\dots
,\sigma _{m}\right] |_{\mathrm{x}_{t}}=\mathcal{T}_{\mathrm{x}_{t}}\mathbb{R}%
^{d}$ for some $t\in \left[ 0,T\right] $, and this clearly remains valid in
a small enough open interval containing $t$ which is enough to conclude $%
\left( \Phi _{T\leftarrow t}\left( \mathrm{h}\right) \right) ^{\ast }\mathrm{%
p}\equiv 0$. By non-degeneracy of the (co-)tangent flow, this implies $%
\mathrm{p}=0$ and so $C\left( \mathrm{h}\right) $ is non-degenerate, as
claimed.
\end{proof}

We now introduce the \textit{Hamiltonian}%
\begin{eqnarray*}
\mathcal{H}\left( \mathrm{x},\mathrm{p}\right) &:&=\left\langle \mathrm{p}%
,\sigma _{0}\left( \mathrm{x}\right) \right\rangle +\frac{1}{2}%
\sum_{i=1}^{m}\left\langle \mathrm{p},\sigma _{i}\left( \mathrm{x}\right)
\right\rangle ^{2} \\
&=&\left\langle \mathrm{p},\sigma _{0}\left( \mathrm{x}\right) \right\rangle
+\frac{1}{2}\left\langle \mathrm{p},\left( \sigma \sigma ^{T}\right) \left(
\mathrm{x}\right) \mathrm{p}\right\rangle
\end{eqnarray*}%
and \textrm{H}$_{t\leftarrow 0}=$\textrm{H}$_{t\leftarrow 0}\left( \mathrm{x}%
_{0},\mathrm{p}_{0}\right) $ as the flow associated to the vector field $%
\left( \partial _{\mathrm{p}}\mathcal{H},-\partial _{\mathrm{x}}\mathcal{H}%
\right) $ on $\mathcal{T}^{\ast }\mathbb{R}^{d}$. (As in \cite[p.37]{Bismut}%
, this vector\ field is complete, i.e. \textrm{H}$_{\cdot \leftarrow 0}$
does not explode.)

The following propositions generalize the respective results in Bismut's
book \cite{Bismut} (see also Ben Arous \cite[Theorems 1.15 and 1.1.8]%
{Benarous}) from a drift-free ($\sigma _{0}\equiv 0$), point-to-point
setting ($\mathrm{x}_{0}\in \mathbb{R}^{d}$ to $\mathrm{y}\in \mathbb{R}^{d}$%
) to a point-to-subspace setting ($\mathrm{x}_{0}\in \mathbb{R}^{d}$ to $%
\left( \mathrm{y},\cdot \right) \in \mathbb{R}^{l}\oplus \mathbb{R}^{d-l}$)
with non-zero drift vector field $\sigma _{0}$. Note that the Bismut setting \cite[%
Chapter I]{Bismut} is recovered by taking zero drift, $\sigma _{0}\equiv 0$,
and $l=d$.

\begin{proposition}
\label{BA1_15}\bigskip If (i) $\mathrm{h}_{0}\in \mathcal{K}_{\mathrm{a}%
}^{\min }$ is a minimizing control and (ii) the deterministic Malliavin
covariance matrix $C\left( \mathrm{h}_{0}\right) $ is invertible then there
exists a unique $\mathrm{p}_{0}=\mathrm{p}_{0}\left( \mathrm{h}_{0}\right)
\in \mathcal{T}_{\mathrm{x}_{0}}^{\ast }\mathbb{R}^{d}$, in fact\footnote{%
The (global) coordinate chart $\left( x^{1},\dots ,x^{d}\right) $ of $%
\mathbb{R}^{d}$ induces coordinates co-vectors fields (or one-forms) $\left(
dx^{1},\dots ,dx^{d}\right) $.}%
\begin{equation*}
\mathrm{p}_{0}\in \left( \Phi _{T\leftarrow 0}\left( \mathrm{h}_{0}\right)
\right) ^{\ast }\mathrm{span}\left\{ dx^{1},\dots ,dx^{l}\right\} |_{\phi
_{T}^{h}\left( \mathrm{x}_{0}\right) },
\end{equation*}%
such that%
\begin{equation}
\phi _{t}^{\mathrm{h}_{0}}\left( \mathrm{x}_{0}\right) =\pi \mathrm{H}%
_{t\leftarrow 0}\left( \mathrm{x}_{0}\mathrm{,p}_{0}\right) ,\text{ }0\leq
t\leq T  \label{projection_bi_char}
\end{equation}%
($\pi $ denotes the projection from $\mathcal{T}^{\ast }\mathbb{R}^{d}$ onto
$\mathbb{R}^{d}$; in coordinates $\pi \left( \mathrm{x},\mathrm{p}\right) =%
\mathrm{x}$)\newline
Moreover, $\left( \mathrm{x}\left( t\right) ,\mathrm{p}\left( t\right)
\right) :=\mathrm{H}_{t\leftarrow 0}\left( \mathrm{x}_{0},\mathrm{p}%
_{0}\right) $ solves the \underline{Hamiltonian ODEs} in $\mathcal{T}^{\ast }%
\mathbb{R}^{d}\cong \mathbb{R}^{d}\oplus \mathbb{R}^{d}$
\begin{equation}
\left(
\begin{array}{c}
\mathrm{\dot{x}} \\
\mathrm{\dot{p}}%
\end{array}%
\right) =\left(
\begin{array}{c}
\partial _{\mathrm{p}}\mathcal{H}\left( \mathrm{x}\left( t\right) ,\mathrm{p}%
\left( t\right) \right) \\
-\partial _{\mathrm{x}}\mathcal{H}\left( \mathrm{x}\left( t\right) ,\mathrm{p%
}\left( t\right) \right)%
\end{array}%
\right) ,  \label{HamiltonianODEs}
\end{equation}%
the minimizing control $\mathrm{h}_{0}=\mathrm{h}_{0}\left( \cdot \right) $
is recovered by%
\begin{equation}
\mathrm{\dot{h}}_{0}=\left(
\begin{array}{c}
\,\left\langle \sigma _{1}\left( \mathrm{x}\left( \cdot \right) \right) ,%
\mathrm{p}\left( \cdot \right) \right\rangle \\
\dots \\
\,\,\left\langle \sigma _{m}\left( \mathrm{x}\left( \cdot \right) \right) ,%
\mathrm{p}\left( \cdot \right) \right\rangle%
\end{array}%
\right) .  \label{Formula_h0}
\end{equation}%
At last, crucial for actual computations, $\left( \mathrm{x}\left( t\right) ,%
\mathrm{p}\left( t\right) \right) =\mathrm{H}_{t\leftarrow 0}\left( \mathrm{x%
}_{0},\mathrm{p}_{0}\right) $ satisfies the Hamiltonian ODEs (\ref%
{HamiltonianODEs}) as \underline{boundary value problem}, subject to the
following initial -, terminal - and transversality conditions,%
\begin{eqnarray}
\mathrm{x}\left( 0\right) &=&\mathrm{x}_{0}\in \mathbb{R}^{d},  \notag \\
\mathrm{x}\left( T\right) &=&\left( \mathrm{a},\cdot \right) \in \mathbb{R}%
^{l}\mathbb{\oplus R}^{d-l}\cong \mathbb{R}^{d},  \notag \\
\mathrm{p}\left( T\right) &=&\left( \cdot ,0\right) \in \mathbb{R}^{l}%
\mathbb{\oplus R}^{d-l}\cong \mathcal{T}_{\mathrm{x}\left( T\right) }^{\ast }%
\mathbb{R}^{d}.  \label{ITTcond}
\end{eqnarray}
\end{proposition}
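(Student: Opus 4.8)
The plan is to run a Lagrange multiplier argument in the Hilbert space $H$, in the spirit of Bismut \cite[Thm 1.5]{Bismut} and Ben Arous \cite[Thm 1.15]{Benarous}, but adapted to the point-to-subspace target $N_{\mathrm{a}}$ and to the non-zero drift $\sigma_0$. Write $F(\mathrm{h}):=\Pi_l\circ\phi_T(\mathrm{h})\in\mathbb{R}^l$ and $E(\mathrm{h}):=\tfrac12\Vert\mathrm{h}\Vert_H^2$; both are smooth on $H$ (for $F$ because $\mathrm{h}\mapsto\phi_T(\mathrm{h})$ is smooth, by standard ODE theory under the smoothness and boundedness hypotheses on the $\sigma_i$), and by assumption $\mathrm{h}_0$ minimizes $E$ over $\mathcal{K}_{\mathrm{a}}=F^{-1}(\mathrm{a})$.

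First I would record the constraint qualification. Since $C(\mathrm{h}_0)=\langle D\phi_T(\mathrm{h}_0),D\phi_T(\mathrm{h}_0)\rangle_H$ is invertible (in particular positive definite), for every $\lambda\in(\mathbb{R}^l)^*\setminus\{0\}$ one has $\langle C(\mathrm{h}_0)\Pi_l^*\lambda,\Pi_l^*\lambda\rangle=\Vert (DF(\mathrm{h}_0))^*\lambda\Vert_H^2>0$, using $(DF(\mathrm{h}_0))^*=(D\phi_T(\mathrm{h}_0))^*\circ\Pi_l^*$ and the quadratic-form identity $\langle C(\mathrm{h})\mathrm{p},\mathrm{p}\rangle=\Vert\langle\mathrm{p},D\phi_T(\mathrm{h})\rangle\Vert_H^2$ recorded above. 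Hence $(DF(\mathrm{h}_0))^*$ is injective, i.e. $DF(\mathrm{h}_0):H\to\mathbb{R}^l$ is onto; this is exactly the submersion property that (as already noted following \cite[Thm 1.5]{Bismut}) makes $\mathcal{K}_{\mathrm{a}}$ a Hilbert submanifold near $\mathrm{h}_0$ with tangent space $\ker DF(\mathrm{h}_0)$. The first order condition then reads $\langle\mathrm{h}_0,\cdot\rangle_H\equiv 0$ on $\ker DF(\mathrm{h}_0)$, so $\mathrm{h}_0\in(\ker DF(\mathrm{h}_0))^\perp=\mathrm{range}\,(DF(\mathrm{h}_0))^*$, a closed subspace since it is finite-dimensional. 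Thus there is a unique $\lambda$ with $\mathrm{h}_0=(DF(\mathrm{h}_0))^*\lambda$; set $\mathrm{p}_T:=\Pi_l^*\lambda\in\mathrm{span}\{dx^1,\dots,dx^l\}|_{\mathrm{x}_T}$, so that $\mathrm{h}_0=(D\phi_T(\mathrm{h}_0))^*\mathrm{p}_T$.

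Next I would insert the explicit formula for $D\phi_T(\mathrm{h}_0)$ from the preceding proposition, which turns $\mathrm{h}_0=(D\phi_T(\mathrm{h}_0))^*\mathrm{p}_T$ into $\dot h_{0,t}^{\,j}=\langle(\Phi_{T\leftarrow t}(\mathrm{h}_0))^*\mathrm{p}_T,\sigma_j(\mathrm{x}_t)\rangle$ for a.e.\ $t$. Defining $\mathrm{p}(t):=(\Phi_{T\leftarrow t}(\mathrm{h}_0))^*\mathrm{p}_T$ and $\mathrm{p}_0:=\mathrm{p}(0)=(\Phi_{T\leftarrow 0}(\mathrm{h}_0))^*\mathrm{p}_T$ — already of the asserted form, and giving (\ref{Formula_h0}) — it remains to verify that $t\mapsto(\mathrm{x}(t),\mathrm{p}(t))$ solves the Hamiltonian system (\ref{HamiltonianODEs}). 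The $\dot{\mathrm{x}}$-equation is immediate from (\ref{dphih}) together with $\dot h_0^{\,j}=\langle\mathrm{p},\sigma_j(\mathrm{x})\rangle$, giving $\dot{\mathrm{x}}=\sigma_0(\mathrm{x})+\sum_j\langle\mathrm{p},\sigma_j(\mathrm{x})\rangle\sigma_j(\mathrm{x})=\partial_{\mathrm{p}}\mathcal{H}$. For the $\dot{\mathrm{p}}$-equation I would differentiate $\mathrm{p}(t)=(\Phi_{T\leftarrow t}(\mathrm{h}_0))^*\mathrm{p}_T$, using the Jacobi-type ODE $\partial_t\Phi_{T\leftarrow t}=-\Phi_{T\leftarrow t}\,A_t$ with $A_t=D\sigma_0(\mathrm{x}_t)+\sum_j\dot h_{0,t}^{\,j}D\sigma_j(\mathrm{x}_t)$, so that $\dot{\mathrm{p}}=-A_t^*\mathrm{p}$; substituting $\dot h_{0,t}^{\,j}=\langle\mathrm{p}(t),\sigma_j(\mathrm{x}_t)\rangle$ one recognizes the right-hand side as $-\partial_{\mathrm{x}}\mathcal{H}(\mathrm{x}(t),\mathrm{p}(t))$. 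Uniqueness for (\ref{HamiltonianODEs}) (the flow being complete, as noted) then gives $(\mathrm{x}(t),\mathrm{p}(t))=\mathrm{H}_{t\leftarrow 0}(\mathrm{x}_0,\mathrm{p}_0)$, hence (\ref{projection_bi_char}) after applying $\pi$. The conditions (\ref{ITTcond}) are read off at the endpoints: $\mathrm{x}(0)=\mathrm{x}_0$ by construction; $\Pi_l\mathrm{x}(T)=\Pi_l\phi_T(\mathrm{h}_0)=\mathrm{a}$ since $\mathrm{h}_0\in\mathcal{K}_{\mathrm{a}}$; and $\mathrm{p}(T)=\mathrm{p}_T=\Pi_l^*\lambda$ has vanishing $dx^{l+1},\dots,dx^d$-components, which is the transversality condition. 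Uniqueness of $\mathrm{p}_0$ follows from uniqueness of the multiplier $\lambda$: conversely, any $\mathrm{p}_0$ of the stated form satisfying (\ref{projection_bi_char}) produces, on reversing this computation, a co-state path whose terminal value is a Lagrange multiplier for the constrained problem, hence equals $\lambda$.

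I expect the main obstacle to be the bookkeeping of adjoints rather than any isolated hard estimate: one must correctly identify $(DF(\mathrm{h}_0))^*$, handle the pairing of co-vectors with the $H$-valued Fr\'{e}chet derivative $D\phi_T$, and use the transpose Jacobi equation, so that the abstract multiplier $\lambda$ is recognized as precisely the Pontryagin co-state $\mathrm{p}(\cdot)$ solving the Hamiltonian ODEs with the correct terminal and transversality data at $T$. The other delicate point is the infinite-dimensional constraint qualification — surjectivity of $DF(\mathrm{h}_0)$ and closedness of $\mathrm{range}\,(DF(\mathrm{h}_0))^*$ — which is exactly what invertibility of $C(\mathrm{h}_0)$ is brought in to supply, and which simultaneously yields uniqueness of $\lambda$ and hence of $\mathrm{p}_0$.
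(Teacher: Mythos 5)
Your argument is correct and is, in substance, the same Bismut--Takanobu--Watanabe Lagrange-multiplier argument that the paper invokes by reference rather than spelling out: invertibility of $C(\mathrm{h}_0)$ is used as the constraint qualification (ruling out the strictly abnormal case), the first-order condition forces $\mathrm{h}_0=(D\phi_T(\mathrm{h}_0))^{\ast}\mathrm{p}_T$ with $\mathrm{p}_T\in\mathrm{span}\{dx^1,\dots,dx^l\}|_{\mathrm{x}_T}$, and pulling back along the transpose Jacobi flow produces the Hamiltonian co-state and the boundary/transversality data (\ref{ITTcond}). The only cosmetic difference is that the paper, following Takanobu--Watanabe, absorbs the drift $\sigma_0$ as a $0$th control $h_t^0\equiv t$, whereas you carry $\sigma_0$ along directly in the ODEs; the resulting computations and conclusions are the same.
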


\begin{remark}
With $C:=\mathcal{H}\left( \mathrm{x}\left( t\right) ,\mathrm{p}\left(
t\right) \right) $ independent of $t\in \left[ 0,T\right] $, one has%
\begin{equation}
\Lambda \left( \mathrm{a}\right) =\frac{1}{2}\Vert \mathrm{h}_{0}\Vert
_{H}^{2}=TC-\frac{1}{2}\int_{0}^{T}\left\langle \sigma _{0}\left( \mathrm{x}%
\left( t\right) \right) ,\mathrm{p}\left( t\right) \right\rangle dt\text{.}
\label{EnergyFormula}
\end{equation}
\end{remark}

\begin{proof}
The key remark, due to Bismut \cite[Chapter I]{Bismut}, is that under the
assumption "$\exists C\left( \mathrm{h}_{0}\right) ^{-1}$" the set $\mathcal{%
K}_{\mathrm{a}}^{\min }$ can be described by Hamilton--Jacobi theory. It
then suffices to adapt the arguments of Bismut, as done in the drift-free
case by Takanobu--Watanabe, \cite[Prop. 4.1]{TW}. Let us note that the
additional drift vector field $\sigma _{0}$ is trivially incorporated in
their setting, cf. the evolution given in (\ref{dphih}), by adding a $0$th
component to the controls, i.e. $h_{t}^{0}\equiv t$. The boundary conditions
- in particular, transversality, have not been pointed out explicitly in
\cite{TW} although are implicitly contained in their formulation. In fact,
formal application of Pontryagin's maximum principle leads precisely to the
above boundary value problem; care is necessary, however, since without
assuming invertibility of $C\left( \mathrm{h}_{0}\right) $, one can be in
the so-called "strictly abnormal" case; the above approach is then not
possible.
\end{proof}

\begin{remark}
\label{RmkPartialEnergyEqualPT}Assume that $\mathrm{y}\mapsto \mathrm{h}%
_{0}\left( \mathrm{y}\right) \in \mathcal{K}_{\mathrm{y}}^{\min }$ is a
smooth map (in a neighbourhood of the fixed point $\mathrm{a}$). Writing $'$ for the derivative with respect to ${\mathrm{y}}$,
we have%
\begin{equation*}
\Lambda \left( \mathrm{y}\right) =\frac{1}{2}\left\Vert \mathrm{h}_{0}\left(
\mathrm{y}\right) \right\Vert ^{2}\implies \Lambda'
\left( \mathrm{a}\right) =\left\langle \mathrm{h}_{0}\left( \mathrm{a}%
\right) ,\mathrm{h}'_{0}\left( \mathrm{a}\right)
\right\rangle _{H}.
\end{equation*}%
On the other hand, it follows from $\Pi _{l}\phi _{T}\left( \mathrm{h}%
_{0}\left( \mathrm{y}\right) \right) =\mathrm{y}$ that\footnote{$\left( \Pi
_{l}\right) _{\ast }:T_{\mathrm{x}}\mathbb{R}^{d}\rightarrow T_{\Pi _{l}%
\mathrm{x}}\mathbb{R}^{l}$ is the differential of the projection map $\Pi
_{l}:\left( x^{1},\dots ,x^{d}\right) \rightarrow \left( x^{1},\dots
,x^{l}\right) .$
}
\begin{equation}
\left( \Pi _{l}\right) _{\ast }D\phi _{T}\left( \mathrm{h}_{0}\left( \mathrm{%
a}\right) \right) \left[ \mathrm{h}'_{0}\left( \mathrm{a%
}\right) \right] =\mathrm{Id}\text{ on }\mathbb{R}^{l}.
\label{EquUsefulRmkDerLambda}
\end{equation}%
Write $\mathrm{p}\left( T\right) =\left( \mathrm{q}\left( \mathrm{a}\right)
,0\right) \in \left( \cdot ,0\right) $ as determined by the previous
proposition, equation (\ref{ITTcond}), where our notation $\mathrm{q}=\mathrm{q}\left( \mathrm{%
a}\right) $ emphasizes the dependency on $\mathrm{a}$, with $T$ fixed. One
has (cf. lemma \ref{LemmaFormOfh})%
\begin{equation*}
\mathrm{h}_{0}\left( \mathrm{a}\right) =D\phi _{T}\left( \mathrm{h}%
_{0}\left( \mathrm{a}\right) \right) ^{\ast }\mathrm{p}\left( T\right)
=\left( \left( \Pi _{l}\right) _{\ast }D\phi _{T}\left( \mathrm{h}_{0}\left(
\mathrm{a}\right) \right) \right) ^{\ast }\mathrm{q}\left( \mathrm{a}\right)
\end{equation*}%
and it follows that%
\begin{equation*}
\Lambda'
\left( \mathrm{a}\right) =\left\langle \mathrm{h}_{0}\left( \mathrm{a}%
\right) ,\mathrm{h}'_{0}\left( \mathrm{a}\right)
\right\rangle _{H}
=\left\langle
\mathrm{q} \left( \mathrm{a}\right) ,\left( \Pi _{l}\right) _{\ast }D\phi _{T}\left(
\mathrm{h}_{0}\left( \mathrm{y}\right) \right) \mathrm{%
h}'_{0}\left( \mathrm{a}\right) \right\rangle .
\end{equation*}%
Thanks to (\ref{EquUsefulRmkDerLambda}) we now see that%
\begin{equation}
 \Lambda'
\left( \mathrm{a}\right) = \mathrm{q}\left( \mathrm{a}\right). \label{dEnergyIsPT}
\end{equation}%
This can be a useful short-cut when computing the energy from the
Hamiltonian system. If $\#\mathcal{K}_{\mathrm{a}}^{\min }=1$ for some $%
\mathrm{a}$, and our non-degeneracy condition \textrm{(ND)} as introduced
below is met, the existence of such a map $\mathrm{h}_{0}\left( \mathrm{%
\cdot }\right) $ can be shown along the lines of \cite[Thm 1.26]{Bismut}. We
shall not rely on formula (\ref{dEnergyIsPT}) in the sequel.
\end{remark}

\begin{remark}[How to compute optimal controls $h_{0}$]
\label{HowToComputeH0}Proposition \ref{BA1_15} - as it stands - requires $%
\mathrm{h}_{0}$ to be a minimizer and then, subject to condition (ii),
provides us with some information about $\phi ^{\mathrm{h}_{0}}\left(
\mathrm{x}_{0}\right) $ and in particular allows us to reconstruct $\mathrm{h%
}_{0}$ from the Hamiltonian flow%
\begin{equation*}
\left( \mathrm{x,p}\right) \equiv H_{\cdot \leftarrow 0}\left( \mathrm{x}%
_{0},\mathrm{p}_{0}\right) =H_{\cdot \leftarrow T}\left( \mathrm{x}_{T},%
\mathrm{p}_{T}\right) ,
\end{equation*}%
cf. equation (\ref{Formula_h0}). That said, we can consider any solution to
the boundary valued problem (\ref{HamiltonianODEs}),(\ref{ITTcond}), say $%
\left( \mathrm{\hat{x}},\mathrm{\hat{p}}\right) $, and define a (possibly
non-minimizing) control path \textrm{\^{h}}$_{0}$ via (\ref{Formula_h0}) i.e.%
\begin{equation*}
\mathrm{\hat{h}}_{0}^{i}=\int_{0}^{\cdot }\,\left\langle \sigma _{i}\left(
\mathrm{\hat{x}}\left( t\right) \right) ,\mathrm{\hat{p}}\left( t\right)
\right\rangle dt,\,\,\,\,i=1,\dots ,m.
\end{equation*}%
From (\ref{HamiltonianODEs}),
\begin{equation*}
d\mathrm{\hat{x}}_{t}=\partial _{\mathrm{p}}\mathcal{H}\left( \mathrm{\hat{x}%
}_{t},\mathrm{\hat{p}}_{t}\right) dt=\sigma _{0}\left( \mathrm{\hat{x}}%
_{t}\right) dt+\sum_{i=1}^{m}\sigma _{i}\left( \mathrm{\hat{x}}_{t}\right)
\left\langle \sigma _{i}\left( \mathrm{\hat{x}}_{t}\right) ,\mathrm{\hat{p}}%
_{t}\right\rangle dt
\end{equation*}%
and so relation (\ref{projection_bi_char}) remains valid i.e. $\phi _{t}^{%
\mathrm{\hat{h}}_{0}}\left( \mathrm{x}_{0}\right) =\mathrm{\hat{x}}_{t}$. It
follows that the boundary conditions valid for \textrm{\^{x}} (namely,
\textrm{\^{x}}$\left( 0\right) =\mathrm{x}_{0},\Pi _{l}$\textrm{\^{x}}$%
\left( T\right) =\mathrm{a}$) are also valid for $\phi ^{\mathrm{\hat{h}}%
_{0}}\left( \mathrm{x}_{0}\right) $ and hence $\mathrm{\hat{h}}_{0}$ $\in
\mathcal{K}_{\mathrm{a}}$. While we do not know if $\mathrm{\hat{h}}_{0}\in
\mathcal{K}_{\mathrm{a}}^{\min }$, proposition \ref{BA1_15} guarantees that
every minimizer $\mathrm{h}_{0}$ $\in \mathcal{K}_{\mathrm{a}}^{\min }$ can
be found be the above procedure. We thus have the following \textbf{recipe}:%
\newline
\textbf{(i)} Argue a priori that $C\left( \mathrm{h}_{0}\right) $ is
invertible (or ignore and check in the end).\newline
\textbf{(ii)} Solve Hamiltonian ODEs as boundary value problem, cf. (\ref%
{HamiltonianODEs}),(\ref{ITTcond}). Characterize all solutions via the
(non-empty!) set%
\begin{equation*}
\left\{ \mathrm{\hat{p}}_{0}:\mathrm{H}_{t\leftarrow 0}\left( \mathrm{x}_{0};%
\mathrm{\hat{p}}_{0}\right) \equiv \left( \mathrm{\hat{x}}_{t},\mathrm{\hat{p%
}}_{t}\right) \text{ satisfies (\ref{HamiltonianODEs}),(\ref{ITTcond}) }%
\right\} ;
\end{equation*}%
equivalently, characterize all solutions by $\left( \text{\textrm{\^{z}}}%
_{T},\mathrm{\hat{q}}_{T}\right) $ where $\left( \mathrm{\hat{x}}_{t},%
\mathrm{\hat{p}}_{t}\right) =\mathrm{H}_{t\leftarrow T}\left( \left( \mathrm{%
a},\mathrm{\hat{z}}_{T}\right) ;\left( \mathrm{\hat{q}}_{T},0\right) \right)
$.\newline
\textbf{(iii)} For each such solution $\left( \mathrm{\hat{x}}_{t},\mathrm{%
\hat{p}}_{t}\right) $, compute $\Vert \mathrm{\hat{h}}_{0}\Vert _{H}^{2}$
where $\mathrm{\hat{h}}_{0}$ is given by
\begin{equation*}
\mathrm{\hat{h}}_{0}^{i}=\int_{0}^{\cdot }\,\left\langle \sigma _{i}\left(
\mathrm{\hat{x}}_{t}\right) ,\mathrm{\hat{p}}_{t}\right\rangle
dt,\,\,\,\,\,i=1,\dots ,m.
\end{equation*}%
\textbf{(iv)} The minimizing $\mathrm{h}_{0}$ are precisely those elements
in $\{\mathrm{\hat{h}}_{0}:$ as constructed in (ii),(iii)$\}$ which minimize
the energy $\Vert \mathrm{\hat{h}}_{0}\Vert _{H}^{2}$. In particular then,%
\begin{equation*}
\Lambda \left( \mathrm{a}\right) =\frac{1}{2}\Vert \mathrm{h}_{0}\Vert
_{H}^{2}.
\end{equation*}
\end{remark}

The following proposition is crucial.

\begin{proposition}
\label{BA1_18}Under the assumptions of the proposition \ref{BA1_15}, in
particular $\mathrm{h}_{0}\in \mathcal{K}_{\mathrm{a}}^{\min }$ with
associated $\mathrm{p}_{0}=\mathrm{p}_{0}\left( \mathrm{h}_{0}\right) \in
\mathcal{T}_{\mathrm{x}_{0}}^{\ast }\mathbb{R}^{d}$, the following are
equivalent:\newline
(iii) $\mathrm{h}_{0}\in \mathcal{K}_{\mathrm{a}}$ is a non-degenerate
minimum of the energy $I:=\frac{1}{2}\Vert \cdot \Vert _{H}^{2}|_{\mathcal{K}%
_{\mathrm{a}}}$ in the sense that%
\begin{equation*}
I^{\prime \prime }\left( \mathrm{h}_{0}\right) \left[ \mathrm{k},\mathrm{k}%
\right] >0\text{ \ \ \ }\forall \,0\neq \mathrm{k}\in \mathcal{T}_{\mathrm{h}%
_{0}}\mathcal{K}_{\mathrm{a}}\subset H;
\end{equation*}%
(iii') $\mathrm{x}_{0}$ is \textbf{non-focal} for $N=\left( \mathrm{a},\cdot
\right) $ along $\mathrm{h}_{0}$ in the sense that, with $\left( \mathrm{x}%
_{T}\mathrm{,p}_{T}\right) :=\mathrm{H}_{T\leftarrow 0}\left( \mathrm{x}_{0}%
\mathrm{,p}_{0}\left( \mathrm{h}_{0}\right) \right) \in \mathcal{T}^{\ast }%
\mathbb{R}^{d}$,
\begin{equation*}
\partial _{\left( \mathfrak{z},\mathfrak{q}\right) }|_{\left( \mathfrak{z},%
\mathfrak{q}\right) \mathfrak{=}\left( 0,0\right) }\pi \mathrm{H}%
_{0\leftarrow T}\left( \mathrm{x}_{T}+\left(
\begin{array}{c}
0 \\
\mathfrak{z}%
\end{array}%
\right) ,\mathrm{p}_{T}+\left( \mathfrak{q},0\right) \right)
\end{equation*}%
is non-degenerate (as $d\times d$ matrix; here we think of $\left( \mathfrak{%
z},\mathfrak{q}\right) \in \mathbb{R}^{d-l}\times \mathbb{R}^{l}\cong
\mathbb{R}^{d}$ and recall that $\pi $ denotes the projection from $\mathcal{%
T}^{\ast }\mathbb{R}^{d}$ onto $\mathbb{R}^{d}$; in coordinates $\pi \left(
\mathrm{x},\mathrm{p}\right) =\mathrm{x}$).\newline
\end{proposition}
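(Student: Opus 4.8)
The plan is to carry over, to the present point-to-subspace and non-zero-drift situation, Bismut's characterisation \cite[Thm.~1.7]{Bismut} (see also Ben Arous \cite[Thm.~1.15]{Benarous}) of non-degenerate minimisers via the linearised Hamiltonian system, i.e. via Jacobi fields. Two modifications are needed relative to the drift-free point-to-point case of \cite{Bismut}: the terminal constraint is now $\mathrm x\left(T\right)\in N=\left(\mathrm a,\cdot\right)$ rather than $\mathrm x\left(T\right)=\mathrm y$, which replaces the two-point conjugacy condition by the transversal ("point-to-conormal") condition already visible in~(\ref{ITTcond}); and the drift $\sigma_0$ is non-zero, which is harmless exactly as in the proof of Proposition~\ref{BA1_15} (carry $\sigma_0$ through, or append the dummy control $h_t^0\equiv t$).

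First I would set up the geometry. Under assumptions (i)--(ii) of Proposition~\ref{BA1_15}, the set $\mathcal K_{\mathrm a}$ is, near $\mathrm h_0$, a Hilbert submanifold of $H$ \cite[Thm.~1.5]{Bismut} with tangent space
\[
\mathcal T_{\mathrm h_0}\mathcal K_{\mathrm a}=\ker\bigl[\left(\Pi_l\right)_\ast D\phi_T\left(\mathrm h_0\right)\bigr]\subset H.
\]
Being a minimiser, $\mathrm h_0$ is a critical point of $I=\tfrac12\Vert\cdot\Vert_H^2$ constrained to $\mathcal K_{\mathrm a}$; the Lagrange multiplier of the constraint $\Pi_l\phi_T\left(\mathrm h\right)=\mathrm a$ is exactly the covector $\mathrm q=\mathrm q\left(\mathrm a\right)$ of~(\ref{ITTcond}), which is a reformulation of~(\ref{Formula_h0}) together with $\mathrm h_0=D\phi_T\left(\mathrm h_0\right)^\ast\mathrm p\left(T\right)$. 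Hence $\mathrm h_0$ is an \emph{unconstrained} critical point of $J\left(\mathrm h\right):=I\left(\mathrm h\right)-\bigl\langle\mathrm q,\Pi_l\phi_T\left(\mathrm h\right)\bigr\rangle$, and at a critical point the Hessian of $I$ along $\mathcal K_{\mathrm a}$ coincides with $J''\left(\mathrm h_0\right)$ restricted to $\mathcal T_{\mathrm h_0}\mathcal K_{\mathrm a}$. Note $J''\left(\mathrm h_0\right)=\mathrm{Id}-K$ on $H$ with $K$ compact self-adjoint, so its kernel is finite dimensional; this is consistent with the forthcoming identification with a subspace of $\mathbb R^d$.

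Next is the heart of the matter, identifying that kernel. As $\mathrm h_0$ is a minimiser, $J''\left(\mathrm h_0\right)\geq 0$ on $\mathcal T_{\mathrm h_0}\mathcal K_{\mathrm a}$, so (iii) is equivalent to this form having trivial kernel (a null vector of a positive semidefinite form lies in the kernel of the associated bilinear form). For $\mathrm k\in\mathcal T_{\mathrm h_0}\mathcal K_{\mathrm a}$ put $\delta x\left(t\right):=D\phi_t\left(\mathrm h_0\right)\left[\mathrm k\right]$, which solves the linearised control ODE with $\delta x\left(0\right)=0$ and, by definition of $\mathcal T_{\mathrm h_0}\mathcal K_{\mathrm a}$, with $\delta x\left(T\right)\in\mathcal T_{\mathrm x_T}N=\{0\}^l\times\mathbb R^{d-l}$. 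The standard second-variation computation --- integrating by parts in $t$ and using the Euler--Lagrange/Pontryagin equations satisfied by $\mathrm h_0$, the boundary contribution at $t=0$ dropping out because $\delta x\left(0\right)=0$ --- shows that $\mathrm k$ is a null direction of $J''\left(\mathrm h_0\right)$ iff $\delta x$ solves the Jacobi (linearised Euler--Lagrange) equation; equivalently, iff there is a co-state $\delta p$ such that $\left(\delta x,\delta p\right)$ solves the linearised Hamiltonian ODEs of~(\ref{HamiltonianODEs}) and, moreover, $\delta p\left(T\right)\in\mathbb R^l\times\{0\}^{d-l}$ (the conormal of $N$ at $\mathrm x_T$), this last condition being the linearisation of the transversality condition in~(\ref{ITTcond}). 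Conversely, any such Jacobi field yields a unique $\mathrm k\in\mathcal T_{\mathrm h_0}\mathcal K_{\mathrm a}$ by linearising~(\ref{Formula_h0}), namely $\dot{\mathrm k}^i=\bigl\langle\sigma_i\left(\mathrm x\right),\delta p\bigr\rangle+\bigl\langle\partial\sigma_i\left(\mathrm x\right)\delta x,\mathrm p\bigr\rangle$, $\mathrm k\left(0\right)=0$, and $\mathrm k$ is then a null direction. Thus (iii) holds iff the only solution of the linearised Hamiltonian ODEs with $\delta x\left(0\right)=0$, $\delta x\left(T\right)\in\{0\}^l\times\mathbb R^{d-l}$ and $\delta p\left(T\right)\in\mathbb R^l\times\{0\}^{d-l}$ is the trivial one.

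Finally I would match this with (iii'). Define $\Psi\left(\mathfrak z,\mathfrak q\right):=\pi\,\mathrm H_{0\leftarrow T}\bigl(\mathrm x_T+\left(0,\mathfrak z\right),\,\mathrm p_T+\left(\mathfrak q,0\right)\bigr)$ on a neighbourhood of the origin in $\mathbb R^{d-l}\times\mathbb R^l\cong\mathbb R^d$; its differential at $\left(\mathfrak z,\mathfrak q\right)=\left(0,0\right)$ is precisely the $d\times d$ matrix in (iii'). For fixed $\left(\mathfrak z,\mathfrak q\right)$ the curve $t\mapsto d\mathrm H_{t\leftarrow T}\bigl(\left(0,\mathfrak z\right),\left(\mathfrak q,0\right)\bigr)=:\left(\delta x\left(t\right),\delta p\left(t\right)\right)$ solves the linearised Hamiltonian ODEs and has, by construction, $\delta x\left(T\right)=\left(0,\mathfrak z\right)\in\{0\}^l\times\mathbb R^{d-l}$, $\delta p\left(T\right)=\left(\mathfrak q,0\right)\in\mathbb R^l\times\{0\}^{d-l}$, while $\delta x\left(0\right)=D\Psi|_0\left(\mathfrak z,\mathfrak q\right)$. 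Hence $\left(\mathfrak z,\mathfrak q\right)\in\ker D\Psi|_0$ iff the associated Jacobi field additionally has $\delta x\left(0\right)=0$ --- i.e. iff it is of the kind isolated in the previous paragraph. So $\ker D\Psi|_0$ is canonically isomorphic to that space of Jacobi fields, and $D\Psi|_0$ is non-degenerate iff this space is trivial iff (iii) holds, proving the equivalence. The step requiring the most care --- hence the main obstacle --- is the constrained second-variation computation pinning down the transversality condition $\delta p\left(T\right)\in\mathbb R^l\times\{0\}^{d-l}$ on the co-state of a null direction; this is precisely where invertibility of $C\left(\mathrm h_0\right)$ is genuinely used (it makes $\mathrm h\mapsto\Pi_l\phi_T\left(\mathrm h\right)$ a submersion at $\mathrm h_0$, so that $\mathcal K_{\mathrm a}$, its tangent space and the Lagrange multiplier are as stated, and the co-state of a null direction is unambiguously defined), and where, as in Proposition~\ref{BA1_15}, one must stay clear of the strictly abnormal situation --- which assumption (ii) rules out.
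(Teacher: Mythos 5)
Your outline is correct and follows the same conceptual route as the paper: reduce (iii) to triviality of the null space $\mathcal N\left(\mathrm h_0\right)$ of the constrained Hessian, identify that null space with the space of Jacobi fields of the linearised Hamiltonian system subject to $\delta x\left(0\right)=0$, $\delta x\left(T\right)\in\{0\}^l\times\mathbb R^{d-l}$, $\delta p\left(T\right)\in\mathbb R^l\times\{0\}^{d-l}$, and finally observe that this space is, via $\left(\mathfrak z,\mathfrak q\right)\mapsto d\mathrm H_{\cdot\leftarrow T}\left(\left(0,\mathfrak z\right),\left(\mathfrak q,0\right)\right)$, canonically identified with $\ker D\Psi|_0$. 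The Lagrange-multiplier reformulation ($J=I-\left\langle\mathrm q,\Pi_l\phi_T\right\rangle$, the multiplier being precisely the covector of lemma \ref{LemmaFormOfh}) also matches the paper's explicit formula $I''\left(\mathrm h\right)\left[\mathrm k,\mathrm l\right]=\left\langle\mathrm k,\mathrm l\right\rangle_H-\left\langle\left(\mathrm q,0\right),D^2\phi_T\left(\mathrm h\right)\left[\mathrm k,\mathrm l\right]\right\rangle$. Where you and the paper diverge is in how the central step, the bijection "null direction $\leftrightarrow$ admissible Jacobi field," is actually carried out: you invoke "the standard second-variation computation, integrating by parts in $t$," whereas the paper avoids the Lagrangian/integration-by-parts route entirely. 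It instead computes $D^2\phi_T$ explicitly (via push-forward brackets and the variation-of-constants formula), shows that $\mathrm k\in\mathcal N\left(\mathrm h_0\right)$ is equivalent to a linear "backward" Volterra integral equation for $\dot{\mathrm k}$ parameterised by $\theta\in\mathcal T^\ast_{\mathrm x_T}N^\perp_{\mathrm a}$ and $\eta\in\mathcal T_{\mathrm x_T}N_{\mathrm a}$, checks that the Jacobi variation $\mathrm g:=\partial_{\left(\theta,\eta\right)}\mathrm h_0$ constructed from $\mathrm H_{\cdot\leftarrow T}$ satisfies the \emph{identical} Volterra equation, and concludes $\mathrm g=\mathrm k$ from Volterra uniqueness (the Volterra, rather than Fredholm, structure is what makes this work — the paper flags this in a remark). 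So the substance of what you call "standard" is exactly the content of proposition \ref{PropVolterraK} and the subsequent Jacobi-variation matching; it is not a classical one-liner here because one has to keep the $\eta$-term (coming from $\left(\Pi_l\right)_\ast D\phi_T\left(\mathrm h_0\right)\left[\mathrm k\right]$ not vanishing, since $l<d$) separate so as to land on a Volterra, not Fredholm, kernel. You correctly flag this as the main obstacle, but your proposal leaves it as an assertion rather than carrying it out; filling it in is essentially the whole of Section \ref{SectionNonFocality}.
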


\begin{proof}
Let us give a quick proof of (iii')$\implies $(iii) in the Riemannian
setting, the general (sub-Riemannian, with drift) case is new and full proof
is given in the next section. Since $\mathrm{h}_{0}\in \mathcal{K}_{\mathrm{a%
}}^{\min }$ we know that $I^{\prime \prime }\left( \mathrm{h}_{0}\right) $
must be positive semi-definite. In particular, the index of $\mathrm{h}_{0}$%
, relative to the point-submanifold problem $\mathrm{x}_{0}\times N$, is
zero. By the \textit{Morse index theorem} \cite{Sa, BiGr}, there cannot be
any focal point along the $\left( \mathrm{x}_{0}\times N\right) $-geodesic%
\begin{equation*}
\left\{ \pi \mathrm{H}_{t\leftarrow T}\left( \mathrm{x}_{T},\mathrm{p}%
_{T}\right) :t\in (0,T]\right\} .
\end{equation*}%
Condition (iii') guarantees that this extends to $t=0$, i.e. there is no
focal point along%
\begin{equation*}
\left\{ \pi \mathrm{H}_{t\leftarrow T}\left( \mathrm{x}_{T},\mathrm{p}%
_{T}\right) :t\in \left[ 0,T\right] \right\} .
\end{equation*}%
We can then use \cite[lemma 2.9 (b)]{Sa} to conclude that $I^{\prime \prime
}\left( \mathrm{h}_{0}\right) $ is positive definite.
\end{proof}

\begin{definition}[Condition (\textrm{ND); }generalized $\protect\notin $\
cut-locus condition]
\textit{\ \label{DefND}W}e say that $\left\{ \mathrm{x}_{0}\right\} \times
N_{\mathrm{a}}$ where $N_{\mathrm{a}}:=\left( \mathrm{a},\cdot \right)
:=\left\{ \mathrm{x}\in \mathbb{R}^{d}:\Pi _{l}\mathrm{x}=\mathrm{a}\in
\mathbb{R}^{l}\right\} $ satisfies condition \textrm{(ND)} if\newline
(i) $1\leq \#\mathcal{K}_{\mathrm{a}}^{\min }<\infty $,\newline
(ii) the deterministic Malliavin covariance matrix $C\left( \mathrm{h}%
\right) $ is invertible, $\forall \mathrm{h}\in \mathcal{K}_{\mathrm{a}%
}^{\min }$;\newline
(iii) $\mathrm{x}_{0}$ is not focal for $N_{\mathrm{a}}$ along $\mathrm{h}$,
for any $\mathrm{h}\in \mathcal{K}_{\mathrm{a}}^{\min }.$
\end{definition}

When $\sigma _{0}\equiv 0$ and $l=d$, i.e. $N=\left\{ \mathrm{a}\right\} $,
and $\#\mathcal{K}_{\mathrm{a}}^{\min }=1$, condition \textrm{(ND) }says
precisely that $\left( \mathrm{x}_{0},\mathrm{a}\right) $ is not contained
in the sub-Riemannian cut-locus in the sense of Ben Arous \cite{Benarous};
extending the usual Riemannian meaning. In this sense our (global) condition
\textrm{(ND) }is effectively a generalization of the well-known "$\notin $\
cut-locus" condition in the context of heat-kernel expansions.

\begin{theorem}
\label{thm:MainThm} \textbf{(Small noise)} Let $\left( \mathrm{X}%
^{\varepsilon }\right) $ be the solution process to%
\begin{equation*}
d\mathrm{X}_{t}^{\varepsilon }=b\left( \varepsilon ,\mathrm{X}%
_{t}^{\varepsilon }\right) dt+\varepsilon \sigma \left( \mathrm{X}%
_{t}^{\varepsilon }\right) dW_{t},\quad \text{with }\mathrm{X}%
_{0}^{\varepsilon }=\mathrm{x}_{0}^{\varepsilon }\in \mathbb{R}^{d}.
\end{equation*}%
Assume $b\left( \varepsilon ,\cdot \right) \rightarrow \sigma _{0}\left(
\cdot \right) $ in the sense of (\ref{bepsTob}), (\ref{bepsC1}), and $%
\mathrm{X}_{0}^{\varepsilon }\equiv \mathrm{x}_{0}^{\varepsilon }\rightarrow
\mathrm{x}_{0}$ as $\varepsilon \rightarrow 0$ in the sense of (\ref%
{x0eps_ass}). Assume the weak H\"{o}rmander condition (\ref{H}) at $\mathrm{x%
}_{0}\in \mathbb{R}^{d}$. Fix $T>0$ and also
\begin{equation*}
\mathrm{a}\in \mathbb{R}^{l}\text{ \ \ and \ \ \ }N_{\mathrm{a}}:=\left(
\mathrm{a},\cdot \right) \subset \mathbb{R}^{d}
\end{equation*}
and assume that $\left\{ \mathrm{x}_{0}\right\} \times N_{\mathrm{a}}$
satisfies \textrm{(ND), }i.e. the\textrm{\ }generalized $\notin $\ cut-locus
condition (in particular then, $\#\mathcal{K}_{\mathrm{a}}^{\min }\geq 1$).
Then the energy
\begin{equation*}
\Lambda \left( \mathrm{y}\right) =\inf \left\{ \frac{1}{2}\Vert \mathrm{h}%
\Vert _{H}^{2}:\mathrm{h}\in \mathcal{K}_{\mathrm{y}}\right\} =\frac{1}{2}%
\Vert \mathrm{h}_{0}\Vert _{H}^{2}.
\end{equation*}%
is smooth as a function of
$\mathrm{y}$ in a neighbourhood of $\mathrm{a}$ provided $\#\mathcal{K}_{%
\mathrm{a}}^{\min }=1$; otherwise i.e. when $\#\mathcal{K}_{\mathrm{a}%
}^{\min }>1$, we assume so.\footnote{%
It will not be true in general, when $\#\mathcal{K}_{\mathrm{a}}^{\min }>1$,
that $\Lambda \left( \cdot \right) $ is automatically smooth in a neighbourhood of $\mathrm{a}$.
 To wit consider, $\mathcal{K}_{\mathrm{y}}^{\min }=\{\mathrm{h}%
_{0}\left( \mathrm{y}\right) ,\mathrm{\tilde{h}}_{0}\left( \mathrm{y}\right)
\}$. Then $\Lambda \left( \mathrm{y}\right) =\min \left( \frac{1}{2}\Vert
\mathrm{h}_{0}\left( \mathrm{y}\right) \Vert _{H}^{2},\frac{1}{2}\Vert
\mathrm{\tilde{h}}_{0}\left( \mathrm{y}\right) \Vert _{H}^{2}\right) $ and
even if $\Vert \mathrm{h}_{0}\left( \mathrm{\cdot }\right) \Vert _{H}^{2}$
and $\Vert \mathrm{\tilde{h}}_{0}\left( \mathrm{\cdot }\right) \Vert
_{H}^{2} $ \ are smooth near $\mathrm{a}$, this need not be the case for the
minimum.}
Then there exists $%
c_{0}=c_{0}\left( \mathrm{x}_{0},\mathrm{a},T\right) >0$ such that%
\begin{equation*}
\mathrm{Y}_{T}^{\varepsilon }=\Pi _{l}\mathrm{X}_{T}^{\varepsilon }=\left(
X_{T}^{\varepsilon ,1},\dots ,X_{T}^{\varepsilon ,l}\right) ,\,\,\,\,1\leq
l\leq d
\end{equation*}%
admits a density with expansion (for fixed $\mathrm{x}_{0},\mathrm{a}$ and $T>0$)
\begin{equation*}
f^{\varepsilon }\left( \mathrm{a},T\right) =e^{-\frac{\Lambda \left( \mathrm{%
a}\right) }{\varepsilon ^{2}}}e^{\,\frac{\max \left\{ \Lambda ^{\prime
}\left( \mathrm{a}\right) \cdot \,\mathrm{\hat{Y}}_{T}\left( \mathrm{h}%
_{0}\right) :\mathrm{h}_{0}\in \mathcal{K}_{\mathrm{a}}^{\min }\right\} }{%
\varepsilon }}\varepsilon ^{-l}\left( c_{0}+O\left( \varepsilon \right)
\right) \text{ as }\varepsilon \downarrow 0.
\end{equation*}%
Here $\mathrm{\hat{Y}=\hat{Y}}\left( \mathrm{h}_{0}\right) =\left( \hat{Y}%
^{1},\dots ,\hat{Y}^{l}\right) $ is the projection, $\mathrm{\hat{Y}=}\Pi
_{l}\mathrm{\hat{X}}$, of the solution to the following (ordinary)
differential equation
\begin{eqnarray}
d\mathrm{\hat{X}}_{t} &=&\Big(\partial _{\mathrm{x}}b\left( 0,\phi _{t}^{%
\mathrm{h}_{0}}\left( \mathrm{x}_{0}\right) \right) +\partial _{\mathrm{x}%
}\sigma (\phi _{t}^{\mathrm{h}_{0}}\left( \mathrm{x}_{0}\right) )\mathrm{%
\dot{h}}_{0}\left( t\right) \Big)\mathrm{\hat{X}}_{t}dt+\partial
_{\varepsilon }b\left( 0,\phi _{t}^{\mathrm{h}_{0}}\left( \mathrm{x}%
_{0}\right) \right) dt,  \label{eq:SDEYHat} \\
\,\,\,\,\,\mathrm{\hat{X}}_{0} &=&\mathrm{\hat{x}}_{0}\text{ \ \ as given in
(\ref{x0eps_ass}).}  \notag
\end{eqnarray}
\end{theorem}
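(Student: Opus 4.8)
The plan is to run the Laplace (and stationary-phase) method on Wiener space of Azencott \cite{Azencott} and Ben Arous \cite{Benarous, Benarous2}, the two novelties being the point-to-subspace geometry --- governed by Proposition~\ref{BA1_18} and condition \textrm{(ND)} --- and the second order exponential factor produced by the $\varepsilon$-dependence of $b$ and $\mathrm{x}_0^\varepsilon$. As recalled above, $\mathrm{Y}_T^\varepsilon$ has a smooth density, and $f^\varepsilon(\mathrm{a},T)=\mathbb{E}[\delta_{\mathrm{a}}(\mathrm{Y}_T^\varepsilon)]$ is a legitimate identity in Watanabe's calculus of generalized Wiener functionals. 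First I would \emph{localize}: fix a smooth partition of unity $1=\sum_{\mathrm{h}_0\in\mathcal{K}_{\mathrm{a}}^{\min}}\chi_{\mathrm{h}_0}+\chi_\infty$ on path space, $\chi_{\mathrm{h}_0}$ supported in a small tube around the minimizing trajectory $\phi^{\mathrm{h}_0}(\mathrm{x}_0)$. By \textrm{(ND)}(i) there are finitely many minimizers, and by \textrm{(ND)}(ii)--(iii) --- using Bismut's Hilbert-manifold structure of $\mathcal{K}_{\mathrm{a}}$ near each $\mathrm{h}_0$ \cite[Thm 1.5]{Bismut} together with Proposition~\ref{BA1_18} --- each is a strict, isolated local minimum of the energy on $\mathcal{K}_{\mathrm{a}}$; a weak-compactness argument then gives a uniform gap, so that any path in $\operatorname{supp}\chi_\infty$ reaching $N_{\mathrm{a}}$ at time $T$ costs at least $\Lambda(\mathrm{a})+\delta$ for some $\delta>0$. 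A Freidlin--Wentzell-type upper bound, carried out at the level of the generalized functional by controlling the Watanabe--Sobolev norms of $\mathbb{E}[\chi_\infty(\mathrm{X}^\varepsilon)\,\delta_{\mathrm{a}}(\mathrm{Y}_T^\varepsilon)]$, then shows this term is $O(e^{-(\Lambda(\mathrm{a})+\delta)/\varepsilon^2})$, negligible against the claimed main term; it remains to treat $f^\varepsilon_{\mathrm{h}_0}(\mathrm{a},T):=\mathbb{E}[\chi_{\mathrm{h}_0}(\mathrm{X}^\varepsilon)\,\delta_{\mathrm{a}}(\mathrm{Y}_T^\varepsilon)]$ for each $\mathrm{h}_0$.

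Next, for fixed $\mathrm{h}_0\in\mathcal{K}_{\mathrm{a}}^{\min}$ I would apply Girsanov's theorem with the shift $W\mapsto\mathrm{h}_0/\varepsilon+W$: the Cameron--Martin density is $\exp(-\tfrac{1}{2\varepsilon^2}\|\mathrm{h}_0\|_H^2-\tfrac1\varepsilon\langle\mathrm{h}_0,W\rangle_H)$ with $\tfrac12\|\mathrm{h}_0\|_H^2=\Lambda(\mathrm{a})$, and under the shifted measure the equation reads $d\mathrm{X}^\varepsilon=b(\varepsilon,\cdot)\,dt+\sigma(\cdot)\,d\mathrm{h}_0+\varepsilon\sigma(\cdot)\,dW$. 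Thus $\mathrm{X}^\varepsilon|_{\varepsilon=0}=\phi^{\mathrm{h}_0}(\mathrm{x}_0)$, and the $O(\varepsilon)$ term of the stochastic Taylor expansion splits as $\mathrm{\hat{X}}+Z$, where $\mathrm{\hat{X}}$ is precisely the deterministic solution of (\ref{eq:SDEYHat}) --- the contributions $\partial_\varepsilon b(0,\cdot)$ and the shift $\mathrm{\hat{x}}_0$ of the starting point land here --- and $Z$ is the centred Gaussian process solving the same linear equation with $Z_0=0$, no $\partial_\varepsilon b$ term, driven by $\sigma(\phi^{\mathrm{h}_0})\,dW$; variation of constants against the tangent flow $\Phi_{T\leftarrow\cdot}(\mathrm{h}_0)$ gives $Z_T=D\phi_T(\mathrm{h}_0)[W]$. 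Hence $\mathrm{X}^\varepsilon_t=\phi^{\mathrm{h}_0}_t(\mathrm{x}_0)+\varepsilon(\mathrm{\hat{X}}_t+Z_t)+\varepsilon^2\mathrm{R}^\varepsilon_t$ with $\mathrm{R}^\varepsilon$ bounded in every $L^p$ and Malliavin--Sobolev norm uniformly in small $\varepsilon$; projecting and using $\Pi_l\phi^{\mathrm{h}_0}_T(\mathrm{x}_0)=\mathrm{a}$ gives $\mathrm{Y}_T^\varepsilon=\mathrm{a}+\varepsilon(\mathrm{\hat{Y}}_T+\Pi_l Z_T)+\varepsilon^2\Pi_l\mathrm{R}^\varepsilon_T$ with $\mathrm{\hat{Y}}_T=\Pi_l\mathrm{\hat{X}}_T$, so $\delta_{\mathrm{a}}(\mathrm{Y}_T^\varepsilon)=\varepsilon^{-l}\delta_0(\mathrm{\hat{Y}}_T+\Pi_l Z_T+\varepsilon\Pi_l\mathrm{R}^\varepsilon_T)$, which accounts for the $\varepsilon^{-l}$. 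For the linear Girsanov term, Proposition~\ref{BA1_15} and Lemma~\ref{LemmaFormOfh} give $\mathrm{h}_0=D\phi_T(\mathrm{h}_0)^\ast\mathrm{p}(T)$ with $\mathrm{p}(T)=(\mathrm{q}(\mathrm{a}),0)$, whence $\langle\mathrm{h}_0,W\rangle_H=\langle\mathrm{p}(T),Z_T\rangle=\mathrm{q}(\mathrm{a})\cdot\Pi_l Z_T$ (as Wiener integrals); since $\delta_0(\mathrm{\hat{Y}}_T+\Pi_l Z_T+\varepsilon\Pi_l\mathrm{R}^\varepsilon_T)$ pins $\Pi_l Z_T$ to $-\mathrm{\hat{Y}}_T-\varepsilon\Pi_l\mathrm{R}^\varepsilon_T$, multiplication by $e^{-\frac1\varepsilon\langle\mathrm{h}_0,W\rangle_H}$ yields $e^{\frac1\varepsilon\mathrm{q}(\mathrm{a})\cdot\mathrm{\hat{Y}}_T}$ times the bounded functional $e^{\mathrm{q}(\mathrm{a})\cdot\Pi_l\mathrm{R}^\varepsilon_T}$, and $\mathrm{q}(\mathrm{a})=\Lambda'(\mathrm{a})$ by Remark~\ref{RmkPartialEnergyEqualPT}. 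Collecting, $f^\varepsilon_{\mathrm{h}_0}(\mathrm{a},T)=e^{-\Lambda(\mathrm{a})/\varepsilon^2}e^{\Lambda'(\mathrm{a})\cdot\mathrm{\hat{Y}}_T(\mathrm{h}_0)/\varepsilon}\varepsilon^{-l}\,\mathbb{E}[\Xi^\varepsilon\,\delta_0(\mathrm{\hat{Y}}_T+\Pi_l Z_T+\varepsilon\Pi_l\mathrm{R}^\varepsilon_T)]$ for a bounded, genuinely $\varepsilon$-expandable $\Xi^\varepsilon$ absorbing $\chi_{\mathrm{h}_0}$ and $e^{\mathrm{q}(\mathrm{a})\cdot\Pi_l\mathrm{R}^\varepsilon_T}$.

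Finally, the remaining expectation I would handle by Ben Arous' Laplace/stationary-phase asymptotics for Wiener functionals \cite{Benarous2}: it admits a full expansion in $\varepsilon$ with strictly positive leading coefficient $c_0^{(\mathrm{h}_0)}>0$, the required non-degeneracy being exactly the two parts of \textrm{(ND)} beyond finiteness. Invertibility of $C(\mathrm{h}_0)$ makes $\Pi_l D\phi_T(\mathrm{h}_0)$ onto $\mathbb{R}^l$, so $\Pi_l Z_T$ is a non-degenerate Gaussian and $\delta_0(\mathrm{\hat{Y}}_T+\Pi_l Z_T+\dots)$ a genuine $O(1)$ density, with the inverse-moment control needed to absorb $\varepsilon\Pi_l\mathrm{R}^\varepsilon_T$; positive-definiteness of the Hessian $I''(\mathrm{h}_0)$ on $\mathcal{T}_{\mathrm{h}_0}\mathcal{K}_{\mathrm{a}}$, equivalent by Proposition~\ref{BA1_18} to non-focality of $\mathrm{x}_0$ for $N_{\mathrm{a}}$ along $\mathrm{h}_0$, guarantees that the quadratic (second order) term of the Taylor expansion enters the effective finite-dimensional integral with a positive-definite form, so the Gaussian integrals defining the expansion converge. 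Summing over $\mathrm{h}_0\in\mathcal{K}_{\mathrm{a}}^{\min}$ and keeping, on the $e^{\cdot/\varepsilon}$ scale, only those minimizers attaining $c_2:=\max\{\Lambda'(\mathrm{a})\cdot\mathrm{\hat{Y}}_T(\mathrm{h}_0):\mathrm{h}_0\in\mathcal{K}_{\mathrm{a}}^{\min}\}$ --- whose coefficients $c_0^{(\mathrm{h}_0)}$ add to a strictly positive $c_0$ --- yields the stated expansion. The main obstacle is exactly this reduction: on the one hand the uniform-in-$\varepsilon$ Watanabe--Sobolev bookkeeping of $\delta_{\mathrm{a}}(\mathrm{Y}_T^\varepsilon)$ throughout (localization, tails of $\mathrm{R}^\varepsilon$, legitimacy of composing $\delta_0$ with the expanded functional), which is the technical core of \cite{Benarous,Benarous2} and transcribes with effort; on the other hand the passage to a \emph{non-degenerate} finite-dimensional stationary-phase integral, where the genuinely new input is Proposition~\ref{BA1_18} (non-focality $\Leftrightarrow I''(\mathrm{h}_0)>0$, the point-to-subspace analogue of the $\notin$ cut-locus condition), which is already in hand. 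By comparison, extracting the $e^{c_2/\varepsilon}$ factor from the correlation between $\langle\mathrm{h}_0,W\rangle_H$ and the fluctuation $\Pi_l Z_T$ is then routine.
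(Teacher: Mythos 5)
Your proposal is correct in spirit and mechanism, but it takes a genuinely different technical route from the paper. The paper does \emph{not} pass through Watanabe's calculus of generalized Wiener functionals; instead it starts from the Fourier inversion identity (\ref{fespExpF}), multiplies by $e^{-F(\mathrm{a})/\varepsilon^2}$ for a carefully chosen $C^\infty$-bounded auxiliary function $F$ with $F(\mathrm{a})=0$ and $F'(\mathrm{a})=-\Lambda'(\mathrm{a})$, and thereby converts the \emph{constrained} minimization over $\mathcal{K}_{\mathrm{a}}$ into an \emph{unconstrained} one: the map $H\ni \mathrm{h}\mapsto F(\Pi_l\phi_T^{\mathrm{h}}(\mathrm{x}_0))+\tfrac12\|\mathrm{h}\|^2_H$ has a non-degenerate minimum at $\mathrm{h}_0$ (this is where Prop.~\ref{BA1_18} is invoked, exactly as in your argument). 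The Girsanov shift and the cancellation of the $\tfrac1\varepsilon\int\dot{\mathrm{h}}_0\,dW$ term are then the same: in the paper's route the $\int\dot{\mathrm{h}}_0\,dW$ appears by Taylor-expanding $e^{-F(\Pi_l\mathrm{Z}_T^\varepsilon)/\varepsilon^2}$ (equation (\ref{bigO1})), using $F'(\mathrm{a})\cdot\Pi_l D\phi_T(\mathrm{h}_0)[W]=-\langle\mathrm{h}_0,W\rangle_H$, which is precisely your ``pinning'' identity $\langle\mathrm{h}_0,W\rangle_H=\mathrm{q}(\mathrm{a})\cdot\Pi_l Z_T$ read off differently. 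What your route buys is conceptual directness --- the delta function $\delta_{\mathrm{a}}(\mathrm{Y}_T^\varepsilon)$ is the object of interest and you never have to invent the somewhat artificial $F$ --- at the cost of shifting all the weight onto uniform-in-$\varepsilon$ Watanabe--Sobolev bookkeeping (in the style of Takanobu--Watanabe and Kusuoka--Stroock rather than Ben Arous). One genuine omission relative to the theorem's claims: you never establish the asserted smoothness of $\Lambda(\cdot)$ near $\mathrm{a}$ when $\#\mathcal{K}_{\mathrm{a}}^{\min}=1$. The paper derives this from Bismut \cite[Thm 1.26]{Bismut} together with the implicit function theorem applied to $(\mathrm{z},\mathrm{q})\mapsto\pi\mathrm{H}_{0\leftarrow T}((\mathrm{y},\mathrm{z});(\mathrm{q},0))$, whose non-degeneracy is exactly the non-focality condition. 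Your proof silently sidesteps this by carrying $\mathrm{q}(\mathrm{a})$ through to the end and only then citing Remark~\ref{RmkPartialEnergyEqualPT} --- but that remark \emph{presupposes} the smoothness of $\mathrm{y}\mapsto\mathrm{h}_0(\mathrm{y})$, so the identification $\mathrm{q}(\mathrm{a})=\Lambda'(\mathrm{a})$ and the smoothness assertion still need the implicit-function-theorem step you left out.
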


\begin{remark}
The assumption $\mathcal{K}_{\mathrm{a}}\neq \varnothing $, implicit through
$\#\mathcal{K}_{\mathrm{a}}^{\min }\geq 1$ in the statement of the above
theorem, is known to be necessary for the existence of a positive density;
in presence of (\ref{H}) and invertibility of $C\left( \mathrm{h}\right) $,
for some $\mathrm{h}\in \mathcal{K}_{\mathrm{a}}$, it is actually
sufficient; \cite{BAL91}. As noted earlier, the strong H\"{o}rmander
condition at all points (\ref{H1}) is sufficient for $\mathcal{K}_{\mathrm{a}%
}\neq \varnothing $; a less well-known condition of weak-H\"{o}rmander type
is given in \cite{Ju}.
\end{remark}

\begin{proof}
Assume $\#\mathcal{K}_{\mathrm{a}}^{\min }=1$ and see remark \ref%
{RmkMultipleMin} below for the reduction of $\#\mathcal{K}_{\mathrm{a}%
}^{\min }<\infty $ to this case. The basic observation is that $%
f^{\varepsilon }(\mathrm{y},T)$ is the Fourier inverse of its characteristic
function,
\begin{equation*}
\mathbb{E}\left[ \exp \left( i\mathrm{\xi }\cdot \mathrm{Y}_{T}^{\varepsilon
}\right) \right] =\mathbb{E}\left[ \exp \left( i(\mathrm{\xi },0)\cdot
\mathrm{X}_{T}^{\varepsilon }\right) \right]
\end{equation*}%
where we write $\left( \mathrm{\xi },0\right) =\left( \xi ^{1},\dots ,\xi
^{l},0,\dots ,0\right) \in \mathbb{R}^{d}$. In other words, it suffices to
\textit{restrict} the characteristic function of $\mathrm{X}%
_{T}^{\varepsilon }$, the full (Markovian) process evaluated at time $T,$ to
obtain the c.f. of $\mathrm{Y}_{T}^{\varepsilon }$. The density is then
obtained by Fourier-inversion. When $\mathrm{X}_{T}^{\varepsilon }$ is
affine the c.f. is analytically described by ODEs; (approximate) saddle
points are easy to compute and the Fourier inversion - after shifting the
contour through the saddle point - becomes a finite-dimensional Laplace
method which leads to the desired expansion of $f^{\varepsilon }(\mathrm{a}%
,T)$; in essence, this approach was carried out by Friz et al. in \cite{FGGS}%
. In our present situation, of course, $\mathrm{X}$ does not enjoy any
affine structure, but - following Ben Arous \cite{Benarous}, who considers
the "point-to-point" case $l=d$; a similar approach works and ultimately
boils down to applying the Laplace method on\ Wiener space \cite{Benarous2}.
The differences to the setting of \cite{Benarous}, aside from (i) allowing
for $l<d$, is that (ii) our drift-term does not vanish of order $\varepsilon
^{2}$ (which is typical when aiming for short time asymptotics; cf. also
proposition \ref{CorShortTime} below) and (iii) that the starting point is
allowed to depend on $\varepsilon $. In fact, (ii),(iii) are responsible for
the additional exponential $\exp \left\{ \left( ...\right) /\varepsilon
\right\} $ factor in our expansion (Such a factor was already seen in the
general context of the Laplace method on\ Wiener space \cite{Benarous2}.)
Also, (ii) implies that the limiting vector field $\sigma
_{0}=\lim_{\varepsilon \rightarrow 0}b\left( \varepsilon ,\cdot \right) $
affects the leading order behaviour in that the energy $\Lambda \left(
\mathrm{a}\right) $ has no geometric interpretation as square of some
(sub)Riemannian point-subspace distance. In particular, if we want to
implement the strategy of \cite{Benarous} we are forced to revisit the
meaning of all geometric concepts (cut-locus, geodesics, conjugate points
...) upon which the work \cite{Benarous} is based. The key observation now
is that essentially all geometric concepts channel through the
(non-geometric, but infinite-dimensional) condition (iii) of proposition \ref%
{BA1_18} into the application of Laplace's method. Now, the whole point of
proposition \ref{BA1_18} was to provide checkable conditions for $(\mathrm{x}%
_{0},\mathrm{a})$ to satisfy (iii). Having made these part of our assumptions
we are in fact ready to proceed along the lines of Ben Arous \cite{Benarous}.

Fix $\mathrm{y}=\mathrm{a}$ and note that for any $C^{\infty }$-bounded function $%
\mathrm{y}\mapsto F\left( \mathrm{y}\right) $ on $\mathbb{R}^{l}$, by
Fourier inversion,
\begin{align}
f^{\varepsilon }(\mathrm{a},T)e^{-F(\mathrm{a})/\varepsilon ^{2}}& =\frac{1}{%
\left( 2\pi \right) ^{l}}\int_{\mathbb{R}^{l}}\mathbb{E}\left[ \exp \left( i%
\mathrm{\xi }\cdot \left( \mathrm{Y}_{T}^{\varepsilon }-\mathrm{a}\right) -%
\frac{F\left( \mathrm{Y}_{T}^{\varepsilon }\right) }{\varepsilon ^{2}}%
\right) \right] d\mathrm{\xi }  \label{fespExpF} \\
& =\frac{1}{\left( 2\pi \varepsilon \right) ^{l}}\int_{\mathbb{R}^{l}}%
\mathbb{E}\left[ \exp \left( i\mathrm{\zeta }\cdot \left( \frac{\mathrm{Y}%
_{T}^{\varepsilon }-\mathrm{a}}{\varepsilon }\right) -\frac{F\left( \mathrm{Y%
}_{T}^{\varepsilon }\right) }{\varepsilon ^{2}}\right) \right] d\mathrm{%
\zeta }.  \notag \\
& =\frac{1}{\left( 2\pi \varepsilon \right) ^{l}}\int_{\mathbb{R}^{l}}%
\mathbb{E}\left[ \exp \left( i\left( \mathrm{\zeta },0\right) \cdot \left(
\frac{\mathrm{X}_{T}^{\varepsilon }-\left( \mathrm{a},0\right) }{\varepsilon
}\right) \right) e^{-\frac{F\left( \Pi _{l}\mathrm{X}_{T}^{\varepsilon
}\right) }{\varepsilon ^{2}}}\right] d\mathrm{\zeta }.
\label{ThisIntegrandForLaplaceMethod}
\end{align}%
In particular, the last integrand can be computed, as asymptotic expansion
in $\varepsilon $ for fixed $\mathrm{\zeta }$, by Laplace method in Wiener
space, cf. \cite{Benarous}, \cite{Benarous2}, based on the full (Markovian)
process $\mathrm{X}_{T}^{\varepsilon }$. We pick $F$ (for fixed $\mathrm{a}$%
) such that $F\left( \cdot \right) +\Lambda \left( \cdot \right) $ has
minimum at $\mathrm{a}$, i.e.
\begin{equation*}
F\left( \mathrm{a}\right) + \Lambda \left( \mathrm{a}\right) =\inf \left\{ F\left( \mathrm{y}\right)
+\Lambda \left( \mathrm{y}\right) :\mathrm{y}\in \mathbb{R}^{l}\right\}
\end{equation*}%
and such that this minimum is non-degenerate; a natural candidate for $%
F\left( \mathrm{y}\right) $ would then be given (at least for $\mathrm{y}$
near $\mathrm{a}$) by
\begin{eqnarray*}
\mathrm{y} &\mapsto &\lambda \left\vert \mathrm{y}-\mathrm{a}\right\vert
^{2}-\Lambda \left( \mathrm{a}\right) \text{, some }\lambda >0\text{;} \\
\text{or }\mathrm{y} &\mapsto &\lambda \left\vert \mathrm{y}-\mathrm{a}%
\right\vert ^{2}-[\Lambda \left( \mathrm{y}\right) -\Lambda \left( \mathrm{a}%
\right) ]\text{,}
\end{eqnarray*}%
since adding constants is irrelevant here (recall that $\mathrm{a}$ is kept
fixed). The trouble with the above candidate is their potential lack of
(global) smoothness; even in the classical Riemannian setting $\Lambda $ may
not be smooth at the cut-locus. On the other hand, $\Lambda \left( \cdot
\right) $ is smooth near $\mathrm{a}$ in case $\#\mathcal{K}_{\mathrm{a}%
}^{\min }=1$; this is a consequence of \cite[Thm 1.26]{Bismut}. Let us give
some detail. First note that our non-focality condition implies
non-conjugacy of $\left( \mathrm{x}_{0},\phi _{T}^{\mathrm{h}_{0}}\left(
\mathrm{x}_{0}\right) \right) $ along $\mathrm{h}_{0}$ and write $\phi _{T}^{%
\mathrm{h}_{0}}\left( \mathrm{x}_{0}\right) =\left( \mathrm{a,z}\right) $
where $\mathrm{z}=\mathrm{z}\left( \mathrm{a}\right) $, keeping $\mathrm{x}%
_{0}$ fixed. Using crucially $\#\mathcal{K}_{\mathrm{a}}^{\min }=1$, Bismut
shows that the point-point energy function, which he calls $\bar{E}$, is a
smooth function in a neighbourhood of $\left( \mathrm{a,z}\right) $. (His
proof extends without difficulty to non-zero drift, i.e. $\sigma _{0}\neq 0$
in the Hamiltonian; it suffices to use (\ref{EnergyFormula}) at the final
stage of his argument.) Noting $\Lambda \left( \mathrm{a}\right) =\bar{E}%
\left( \mathrm{a},\mathrm{z}\left( \mathrm{a}\right) \right) $ only
smoothness of $\mathrm{y}\mapsto \mathrm{z}\left( \mathrm{y}\right) $
near $\mathrm{a}$ remains to be seen. But since%
\begin{equation*}
\pi \mathrm{H}_{0\leftarrow T}\left( \left( \mathrm{y,z}\left( \mathrm{y}%
\right) \right) ;\left( \mathrm{q}\left( \mathrm{y}\right) ,0\right) \right)
=\mathrm{x}_{0}\text{ (fixed)}
\end{equation*}%
and the derivative with respect to $\mathrm{z},\mathrm{q}$ (non-focality!)
is non-degenerate, this is an immediate consequence from the implict
function theorem.
When $\#\mathcal{K}_{\mathrm{a}}^{\min }>1$, smoothness of
$\Lambda \left( \cdot \right) $ near $\mathrm{a}$ was in fact part of our
assumptions. It is thus natural to localize the above candidates around $%
\mathrm{a}$ which leads us to define $F$, at least in a neighbourhood of $%
\mathrm{a}$, by \footnote{As before, we write $'$ for the derivative with respect to $\mathrm{y}$.
The Hessian of the energy is then written as $\Lambda''$.}
\begin{equation*}
F\left( \mathrm{y}\right) =\lambda \left\vert \mathrm{y}-\mathrm{a}%
\right\vert ^{2}-\left[ \Lambda' \left(
\mathrm{a}\right) \left( \mathrm{y}-\mathrm{a}\right) +\frac{1}{2}
\Lambda'' \left( \mathrm{a}\right)
\left( \mathrm{y}-\mathrm{a},\mathrm{y}-\mathrm{a}\right) \right] ;
\end{equation*}%
a routine modification of $F$, away from $\mathrm{y}$, then guarantees $%
C^{\infty }$-boundedness of $F$. (Since $F\left( \mathrm{a}\right) =0$ with
this last choice of $F$, the l.h.s. of (\ref{fespExpF}) is actually
precisely $f^{\varepsilon }(\mathrm{a},T)$.) Non-degeneracy of the minimum $%
\mathrm{a}$ of $F$ entails that the functional $H\ni \mathrm{h}\mapsto
F\left( \Pi _{l}\circ \phi _{T}^{\mathrm{h}}\left( \mathrm{x}_{0}\right)
\right) +\frac{1}{2}\left\Vert \mathrm{h}\right\Vert _{H}^{2}$ has a
non-degenerate minimum at $\mathrm{h}_{0}\in H$. (The argument is identical
to \cite[Thm 2.6]{Benarous} and makes crucial use of proposition \ref{BA1_18}%
.) The Laplace method is then applicable: we replace $\varepsilon dW$ by $%
\varepsilon dW+d\mathrm{h}_{0}$ in (\ref{SDEXeps}) and call the resulting
diffusion process $Z^{\varepsilon }$. The integrand of (\ref%
{ThisIntegrandForLaplaceMethod}) can then be expressed in terms with $%
\mathrm{X}^{\varepsilon }$ replaced by \textrm{Z}$^{\varepsilon }$; of
course at the price of including the Girsanov factor%
\begin{equation*}
\mathcal{G}:=\exp \left( -\frac{1}{\varepsilon }\int_{0}^{T}\mathrm{\dot{h}}%
_{0}\left( t\right) dW_{t}-\frac{1}{2\varepsilon ^{2}}\int_{0}^{T}\left\vert
\mathrm{\dot{h}}_{0}\left( t\right) \right\vert ^{2}dt\right) =\exp \left( -%
\frac{1}{\varepsilon }\int_{0}^{T}\mathrm{\dot{h}}_{0}\left( t\right) dW_{t}-%
\frac{1}{\varepsilon ^{2}}\Lambda \left( \mathrm{a}\right) \right) .
\end{equation*}%
A stochastic Taylor expansion of \textrm{Z}$^{\varepsilon }$, noting right
away that%
\begin{equation*}
F\left( \Pi _{l}\mathrm{Z}_{T}^{\varepsilon }\right) |_{\varepsilon
=0}=F\left( \Pi _{l}\phi _{T}^{h}\left( \mathrm{x}_{0}\right) \right)
=F\left( \mathrm{a}\right) =0,
\end{equation*}%
then leads to (cf. \cite[Lemme 1.43]{Benarous2})
\begin{eqnarray}
&&\exp \left( -\frac{1}{\varepsilon ^{2}}F\left( \Pi _{l}\mathrm{Z}%
_{T}^{\varepsilon }\right) \right)  \notag \\
&=&\exp \left( -\frac{1}{\varepsilon ^{2}}\left[ F\left( \mathrm{a}\right)
-\varepsilon \int_{0}^{T}\dot{h}_{0}\left( t\right) dW_{t}-\varepsilon \Pi
_{l}\mathrm{\hat{X}}_{T}\cdot \Lambda' \left( \mathrm{a}%
\right) +O\left( \varepsilon ^{2}\right) \right] \right)  \notag \\
&=&\exp \left( \frac{1}{\varepsilon }\int_{0}^{T}\mathrm{\dot{h}}_{0}\left(
t\right) dW_{t}+\frac{1}{\varepsilon }\left( \mathrm{\hat{Y}}_{T}\right)
\cdot \Lambda' \left( \mathrm{a}\right) +O\left(
1\right) \right) .  \label{bigO1}
\end{eqnarray}%
Putting things together, we have, using $F\left( \mathrm{a}\right) =0$, and
noting cancellation of $\int_{0}^{T}\mathrm{\dot{h}}_{0}\left( t\right)
dW_{t}$ in (\ref{bigO1}) with the identical term in the Girsanov factor $%
\mathcal{G}$,
\begin{eqnarray}
f^{\varepsilon }(\mathrm{a},T) &=&\frac{1}{\left( 2\pi \varepsilon \right)
^{l}}\int_{\mathbb{R}^{l}}\mathbb{E}\left[ \mathcal{G\times }\exp \left(
i\left( \mathrm{\zeta },0\right) \cdot \left( \frac{\mathrm{Z}%
_{T}^{\varepsilon }-\left( \mathrm{a},0\right) }{\varepsilon }\right)
\right) e^{-\frac{F\left( \Pi _{l}\mathrm{Z}_{T}^{\varepsilon }\right) }{%
\varepsilon ^{2}}}\right] d\mathrm{\zeta }  \notag \\
&=&\frac{1}{\varepsilon ^{l}}\exp \left( -\frac{1}{\varepsilon ^{2}}\Lambda
\left( \mathrm{a}\right) \right) \exp \left( \frac{1}{\varepsilon }\left(
\mathrm{\hat{Y}}_{T}\right) \cdot \Lambda' \left(
\mathrm{a}\right) \right)  \notag \\
&&\times \underset{=:c_{0}}{\underbrace{\frac{1}{\left( 2\pi \right) ^{l}}%
\int_{\mathbb{R}^{l}}\mathbb{E}\left[ \exp \left( i\left( \mathrm{\zeta }%
,0\right) \cdot \left( \frac{\mathrm{Z}_{T}^{\varepsilon }-\left( \mathrm{a}%
,0\right) }{\varepsilon }\right) \right) \exp \left( O\left( 1\right)
\right) \right] d\mathrm{\zeta }}}  \label{finalFactorc0}
\end{eqnarray}%
where $O\left( 1\right) $ denotes the term, bounded as $\varepsilon
\downarrow 0$, from (\ref{bigO1}). What is left to show, of course, is that $%
c_{0}$, i.e. the final factor in the above expression, is indeed a strictly
positive and finite real number. But since our analysis is based on the full
Markovian process $\mathrm{X}^{\varepsilon }$ (resp. $\mathrm{Z}%
^{\varepsilon }$ after change of measure), the arguments of \cite[Lemme
(3.25)]{Benarous} apply with essentially no changes. In particular, one uses
large deviations as in \cite[Lemme (3.25)]{Benarous}) and, crucially,
non-degeneracy of the minimizer $\mathrm{h}_{0}\in H$, guaranteed by
proposition \ref{BA1_18}. Finally, integrating the asymptotic expansion with
respect to $\mathrm{\zeta }\in \mathbb{R}^{l}$ is justified using the
estimates of \cite[Lemme (3.48)]{Benarous}, obtained using Malliavin
calculus techniques. (There is a slip in \cite[Lemme (3.36)]{Benarous}; the
correction was given in \cite[p.23]{Me}). At last one sees $c_{0}>0$, as in
\cite[p. 330]{Benarous}.
\end{proof}

\begin{remark}[Finitely many multiple minimizers]
\label{RmkMultipleMin}The case
\begin{equation*}
\mathcal{K}_{\mathrm{a}}^{\min }=\left\{ \mathrm{h}_{0}^{\left( 1\right)
},\dots ,\mathrm{h}_{0}^{\left( n\right) }\right\} ,
\end{equation*}%
is handled as in \cite{Benarous2}. Assuming invertibility of the Malliavin
matrix as well as non-focality along each of these minimizers, the expansion
for $f^{\varepsilon }\left( \mathrm{y},T\right) $ as given in theorem \ref%
{thm:MainThm} remains valid. Indeed, after localization around each of these
$n$ minimizers,%
\begin{eqnarray*}
f^{\varepsilon }\left( \mathrm{y},T\right) &=&\left( \sum_{\mathrm{h}_{0}\in
\mathcal{K}_{\mathrm{a}}^{\min }}e^{-\frac{\Lambda \left( \mathrm{y}\right)
}{\varepsilon ^{2}}}e^{\frac{\Lambda ^{\prime }\left( \mathrm{y}\right)
\cdot \,\mathrm{\hat{Y}}_{T}\left( \mathrm{h}_{0}\right) }{\varepsilon }%
}\varepsilon ^{-l}c_{0}\left( \mathrm{h}_{0}\right) \right) (1+O\left(
\varepsilon \right) ) \\
&\sim &\mathrm{(const)}e^{-\frac{\Lambda \left( \mathrm{y}\right) }{%
\varepsilon ^{2}}}e^{\,\max \left\{ \frac{\Lambda ^{\prime }\left( \mathrm{y}%
\right) \cdot \,\mathrm{\hat{Y}}_{T}\left( \mathrm{h}_{0}\right) }{%
\varepsilon }:\mathrm{h}_{0}\in \mathcal{K}_{\mathrm{a}}^{\min }\right\}
}\varepsilon ^{-l}
\end{eqnarray*}%
where $\mathrm{\hat{Y}}_{T}\left( \mathrm{h}_{0}\right) $ denotes the
solution of (\ref{eq:SDEYHat}).
\end{remark}

\begin{remark}[Localization] \label{rem:loc}
The assumptions on the coefficients $b,\sigma $ in theorem \ref{thm:MainThm}
(smooth, bounded with bounded derivatives of all orders) are typical in this
context (cf. Ben\ Arous \cite{Benarous, Benarous2} for instance) but rarely
met in practical examples. This difficulty can be resolved by a
suitable localization which we now outline. Set $\tau _{R}:=\inf \left\{
t\in \left[ 0,T\right] :\sup_{s\in \left[ 0,t\right] }\left\vert \mathrm{X}%
_{s}^{\varepsilon }\right\vert \geq R\right\} $ and assume
\begin{equation*}
\mathbb{P}\left[ \tau _{R}\leq T\right] \lesssim e^{-J_{R}/\varepsilon ^{2}}%
\text{ as }\varepsilon \downarrow 0
\end{equation*}%
with $J_{R}\rightarrow \infty $ as $R\rightarrow \infty $ by this we mean,
more precisely,
\begin{equation}
\lim_{R\rightarrow \infty }\lim \sup_{\varepsilon \rightarrow 0}\varepsilon
^{2}\log \mathbb{P}\left[ \tau _{R}\leq T\right] =-\infty .  \label{Assloc}
\end{equation}%
In that case, we can pick $R$ large enough so that $c_1 = \Lambda \left( \mathrm{a}%
\right) <J_{R}$, uniformly for $\varepsilon $ near $0+$, and can expect that
the behaviour beyond some big ball of radius $R$ will not influence the
expansion. In particular, if the coefficients $b,\sigma $ are smooth, but
fail to be bounded resp. have bounded derivatives, we can modify them
outside a ball of radius $R$ such as to have this property; call $\tilde{b},%
\tilde{\sigma}$ these new coefficients and $\mathrm{\tilde{X}}^{\varepsilon
} $ the associated diffusion. To illustrate the localization, consider $l=1$%
, i.e. $\mathrm{Y}_{T}^{\varepsilon }\equiv \mathrm{X}_{T}^{\varepsilon ,1}$%
, and the distribution function for $\mathrm{Y}_{T}^{\varepsilon }$.
Clearly, one has the two-sided estimates
\begin{equation*}
\mathbb{P}\left[ \mathrm{Y}_{T}^{\varepsilon }\geq \mathrm{a};\tau _{R}>T%
\right] \leq \mathbb{P}\left[ \mathrm{Y}_{T}^{\varepsilon }\geq \mathrm{a}%
\right] \leq \mathbb{P}\left[ \mathrm{Y}_{T}^{\varepsilon }\geq \mathrm{a}%
;\tau _{R}>T\right] +\mathbb{P}\left[ \tau _{R}\leq T\right] ,
\end{equation*}%
and similar for $\mathrm{\tilde{Y}}_{T}^{\varepsilon }$ $\equiv \mathrm{%
\tilde{X}}_{T}^{\varepsilon ,1}$. Since $\mathbb{P}\left[ \mathrm{Y}%
_{T}^{\varepsilon }\geq \mathrm{a};\tau _{R}>T\right] =\mathbb{P}\left[
\mathrm{\tilde{Y}}_{T}^{\varepsilon }\geq \mathrm{a};\tau _{R}>T\right] $ it
then follows%
\begin{equation*}
\left\vert \mathbb{P}\left[ \mathrm{Y}_{T}^{\varepsilon }\geq \mathrm{a}%
\right] -\mathbb{P}\left[ \mathrm{\tilde{Y}}_{T}^{\varepsilon }\geq \mathrm{y%
}\right] \right\vert \leq \mathbb{P}\left[ \tau _{R}\leq T\right] \lesssim
e^{-J_{R}/\varepsilon ^{2}}.
\end{equation*}%
In particular, any expansion for $\mathrm{\tilde{Y}}_{T}^{\varepsilon }$ of
the form
\begin{equation*}
\mathbb{P}\left[ \mathrm{\tilde{Y}}_{T}^{\varepsilon }\geq \mathrm{a}\right]
=e^{-c_{1}/\varepsilon ^{2}}e^{\,c_{2}/\varepsilon ^{2}}\varepsilon
^{-l}c_{0}\left( 1+O\left( \varepsilon \right) \right)
\end{equation*}%
leads, upon taking $R$ large enough so that $J_{R}>c_{1}$, to the same
expansion for $\mathbb{P}\left[ \mathrm{Y}_{T}^{\varepsilon }\geq \mathrm{a}%
\right] $. With more work of routine type, this localization can also be
employed for the density expansion in theorem \ref{thm:MainThm}.
\end{remark}

\subsection{Corollary on short time expansions}

We have the following application to short time asymptotics.

\begin{corollary}
\textbf{(Short time)} \label{CorShortTime}Consider $d\mathrm{X}_{t}=b\left(
\mathrm{X}_{t}\right) dt+\sigma \left( \mathrm{X}_{t}\right) dW$, started at
$\mathrm{X}_{0}=\mathrm{x}_{0}\in \mathbb{R}^{d}$, with $C^{\infty }$%
-bounded vector fields such that the strong H\"{o}rmander condition holds,
\begin{equation}
\forall \mathrm{x}\in \mathbb{R}^{d}:\text{\textrm{Lie}}\left[ \sigma
_{1},\dots ,\sigma _{m}\right] |_{\mathrm{x}}=\mathcal{T}_{\mathrm{x}}%
\mathbb{R}^{d}.  \tag{H1}  \label{H1cond}
\end{equation}%
For fixed $l\in \left\{ 1,\dots ,d\right\} $ assume $\left\{ \mathrm{x}%
_{0}\right\} \times N_{\mathrm{a}}$, where $N_{\mathrm{a}}:=\left( \mathrm{a}%
,\cdot \right) $ for some $\mathrm{a}\in \mathbb{R}^{l}$, satisfies
condition \textrm{(ND)}. Let $f\left( t,\mathrm{y}%
\right) $ be the density of $\mathrm{Y}_{t}=\left( \mathrm{X}_{t}^{1},\dots ,%
\mathrm{X}_{t}^{l}\right)$. Then the following expansion holds at $\mathrm{y}=\mathrm{a}$,
\begin{equation*}
f\left( t,\mathrm{a}\right) \sim \text{(const)}\frac{1}{t^{l/2}}\exp \left( -%
\frac{d^{2}\left( \mathrm{x}_{0},\mathrm{a}\right) }{2t}\right) \text{ as }%
t\downarrow 0,
\end{equation*}%
where $d\left( \mathrm{x}_{0},\mathrm{a}\right) $ is the sub-Riemannian
distance, based on $\left( \sigma _{1},\dots ,\sigma _{m}\right) $, from the
point $\mathrm{x}_{0}$ to the affine subspace $N_{\mathrm{a}}$.
\end{corollary}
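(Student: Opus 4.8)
The plan is to reduce the statement to the small-noise Theorem \ref{thm:MainThm} by Brownian rescaling, which turns small time into small noise with a drift that vanishes in the limit. Fix $t>0$, set $\varepsilon:=\sqrt{t}$, and put $\mathrm{X}_{s}^{\varepsilon}:=\mathrm{X}_{\varepsilon^{2}s}$ for $s\in[0,1]$. By Brownian scaling $(\mathrm{X}_{s}^{\varepsilon})_{s\in[0,1]}$ has the law of the solution on $[0,1]$ of
\begin{equation*}
d\mathrm{X}_{s}^{\varepsilon}=\varepsilon^{2}b\big(\mathrm{X}_{s}^{\varepsilon}\big)\,ds+\varepsilon\,\sigma\big(\mathrm{X}_{s}^{\varepsilon}\big)\,dW_{s},\qquad \mathrm{X}_{0}^{\varepsilon}=\mathrm{x}_{0},
\end{equation*}
which is an instance of (\ref{SDEXeps}) with time horizon $T=1$, with $\varepsilon$-dependent drift $\varepsilon\mapsto\varepsilon^{2}b(\cdot)$ --- so that $\sigma_{0}=b(0,\cdot)\equiv 0$ and $\partial_{\varepsilon}b(0,\cdot)\equiv 0$ --- and with constant starting point, i.e.\ $\mathrm{\hat{x}}_{0}=0$ in (\ref{x0eps_ass}). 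First I would check that the hypotheses of Theorem \ref{thm:MainThm} hold: conditions (\ref{bepsTob}) and (\ref{bepsC1}) are immediate since $\partial_{\mathrm{x}}^{\alpha}(\varepsilon^{2}b)\to 0$ and $\partial_{\varepsilon}(\varepsilon^{2}b)=2\varepsilon b\to 0$ uniformly on compacts; the weak H\"{o}rmander condition (\ref{H}) at $\mathrm{x}_{0}$ and the non-emptiness of $\mathcal{K}_{\mathrm{a}}$ both follow from the strong H\"{o}rmander condition (\ref{H1cond}); and condition (ND) of Definition \ref{DefND} --- which for the rescaled problem refers to the \emph{driftless} control system (\ref{dphih}) with $\sigma_{0}\equiv 0$, and which, that system being invariant under the reparametrisation $\mathrm{h}(\cdot)\mapsto\mathrm{h}(\lambda\,\cdot)$, does not depend on the choice of horizon --- is precisely the standing hypothesis of the corollary (together with, when $\#\mathcal{K}_{\mathrm{a}}^{\min}>1$, smoothness of $\Lambda$ near $\mathrm{a}$, exactly as in Theorem \ref{thm:MainThm}).

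The next step is to evaluate the two exponents in the expansion in this driftless, fixed-starting-point situation. Since $\sigma_{0}\equiv 0$, $\mathrm{\hat{x}}_{0}=0$ and $\partial_{\varepsilon}b(0,\cdot)\equiv 0$, the linear ODE (\ref{eq:SDEYHat}) for $\mathrm{\hat{X}}$ is homogeneous with zero initial datum, hence $\mathrm{\hat{X}}\equiv 0$ and $\mathrm{\hat{Y}}_{1}(\mathrm{h}_{0})=0$ for every $\mathrm{h}_{0}\in\mathcal{K}_{\mathrm{a}}^{\min}$; therefore the ``second order'' exponential factor $\exp\big(\max\{\Lambda'(\mathrm{a})\cdot\mathrm{\hat{Y}}_{1}(\mathrm{h}_{0})\}/\varepsilon\big)$ equals $1$, i.e.\ $c_{2}=0$, as anticipated in the introduction for short-time scalings. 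For the leading exponent I would identify $\Lambda(\mathrm{a})$ with Carnot--Carath\'{e}odory geometry: by Cauchy--Schwarz $\tfrac12\int_{0}^{1}|\dot{\mathrm{h}}|^{2}\ge\tfrac12\big(\int_{0}^{1}|\dot{\mathrm{h}}|\big)^{2}$, with equality for constant-speed controls, and any horizontal path may be reparametrised to constant speed over $[0,1]$ without leaving $\mathcal{K}_{\mathrm{a}}$; hence $\Lambda(\mathrm{a})=\tfrac12\,d(\mathrm{x}_{0},N_{\mathrm{a}})^{2}=\tfrac12\,d(\mathrm{x}_{0},\mathrm{a})^{2}$, with $d(\mathrm{x}_{0},\mathrm{a})$ the sub-Riemannian distance from $\mathrm{x}_{0}$ to the affine subspace $N_{\mathrm{a}}$ based on $\sigma_{1},\dots,\sigma_{m}$.

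Finally, since $\mathrm{Y}_{t}$ has the same law as $\mathrm{Y}_{1}^{\varepsilon}:=\Pi_{l}\mathrm{X}_{1}^{\varepsilon}$, its density at $\mathrm{a}$ equals $f^{\varepsilon}(\mathrm{a},1)$, and substituting the two facts above into the expansion of Theorem \ref{thm:MainThm} (with $\varepsilon=\sqrt{t}$, so that $\varepsilon^{2}=t$ and $\varepsilon^{-l}=t^{-l/2}$) yields
\begin{equation*}
f(t,\mathrm{a})=f^{\sqrt{t}}(\mathrm{a},1)=e^{-\Lambda(\mathrm{a})/t}\,t^{-l/2}\big(c_{0}+O(\sqrt{t})\big)\sim(\text{const})\,t^{-l/2}\exp\!\Big(-\tfrac{d^{2}(\mathrm{x}_{0},\mathrm{a})}{2t}\Big)\quad\text{as }t\downarrow 0,
\end{equation*}
with $(\text{const})=c_{0}(\mathrm{x}_{0},\mathrm{a},1)>0$. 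There is no deep obstacle here --- the corollary is a specialisation of the main theorem --- and the only points that deserve care are (a) checking that the Brownian rescaling produces an admissible instance of (\ref{SDEXeps}) and that the corollary's condition (ND) coincides with (ND) for the driftless control problem (\ref{dphih}), and (b) the two simplifications $\mathrm{\hat{Y}}\equiv 0$ and $\Lambda(\mathrm{a})=\tfrac12 d^{2}(\mathrm{x}_{0},\mathrm{a})$ that collapse the general expansion to the heat-kernel form; of these, the identification of the $L^{2}$-energy with the square of the sub-Riemannian distance to the subspace $N_{\mathrm{a}}$ is the one I would present most carefully.
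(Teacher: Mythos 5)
Your proof is correct and follows essentially the same route as the paper's (Brownian scaling to put the problem in the form of Theorem \ref{thm:MainThm} with $T=1$, $\varepsilon^2=t$, drift $\varepsilon^2 b \to 0$, fixed starting point, hence $\mathrm{\hat{Y}}\equiv 0$ and $c_2=0$, and the classical identification of the energy with half the squared sub-Riemannian distance). The paper's own proof is just a terser version of exactly this argument; you have filled in the verification of the hypotheses of the theorem and the Cauchy--Schwarz/reparametrisation step, all correctly.
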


\begin{proof}
After Brownian scaling, we apply the theorem with $T=1,\varepsilon ^{2}=t$
so that%
\begin{equation*}
b\left( \varepsilon ,\cdot \right) =\varepsilon ^{2}b\left( \cdot \right)
\rightarrow \sigma _{0}\left( \cdot \right) \equiv 0;
\end{equation*}%
which explains why there is no drift vector field in the present H\"{o}%
rmander condition (\ref{H1cond}). Also $\mathrm{x}_{0}^{\varepsilon }=%
\mathrm{x}_{0}$ here. The identification of the energy with $1/2$ times the
square of the sub-Riemannian (or: control - , Carnot-Caratheodory - )
distance from $\mathrm{x}_{0}$ to $N_{\mathrm{a}}$ is classical. At last,
the unique ODE solution to (\ref{eq:SDEYHat}) is then given by $\mathrm{\hat{%
Y}}\equiv 0$ and there is no $\exp \left\{ \left( ...\right) /\varepsilon
\right\} $ factor.
\end{proof}

\section{Non-focality and infinite-dimensional non-degeneracy}

\label{SectionNonFocality}\textit{In this section we give the complete proof
of the crucial proposition \ref{BA1_18}}. \textit{To lighten notation, we
write }$\mathrm{h}$\textit{\ (rather than }$\mathrm{h}_{0}$\textit{) for an
arbitrary fixed element in }$K_{\mathrm{a}}^{\min }.$ \bigskip

By assumption the deterministic Malliavin matrix $C\left( \mathrm{h}\right) $
is invertible and so (cf. Bismut \cite[Thm 1.5]{Bismut}) the space $\mathcal{%
K}_{\mathrm{a}}$ enjoys a (Hilbert) manifold structure, locally around the
minimizer $\mathrm{h}$, and the tangent space at $\mathrm{h}$
\begin{equation*}
\mathcal{T}_{\mathrm{h}}\mathcal{K}_{\mathrm{a}}\cong \ker D\left( \Pi
_{l}\circ \phi _{T}\right) \left( \mathrm{h}\right) =:H_{0}.
\end{equation*}%
can be identified as%
\begin{equation*}
H_{0}=\left\{ \mathrm{k}\in H:\int_{0}^{T}\sum_{j=1}^{m}\left\langle \mathrm{%
p},\Phi _{T\leftarrow t}\left( \mathrm{h}\right) \sigma _{j}\left( \mathrm{x}%
_{t}\right) \right\rangle \text{\textrm{\.{k}}}_{t}^{j}dt=0\,\,\forall
\mathrm{p}\in \mathrm{span}\left[ dx^{1},\dots ,dx^{l}\right] |_{\mathrm{x}%
_{T}}\subset \mathcal{T}_{\mathrm{x}_{T}}^{\ast }\mathbb{R}^{d}\text{ }%
\right\} .
\end{equation*}%
Let us also write $\psi _{T}\equiv \Pi _{l}\phi _{T}$. Since $\mathrm{h}$ is
a minimizer of the energy, we have the first order optimality condition,
\begin{equation*}
I^{\prime }\left( \mathrm{h}\right) =DI\left( \mathrm{h}\right) =0\text{ on }%
\mathcal{T}_{\mathrm{h}}\mathcal{K}_{\mathrm{a}}=H_{0}.\text{ }
\end{equation*}%
We write $\mathrm{x}_{T}=\phi _{T\leftarrow 0}^{\mathrm{h}}\left( \mathrm{x}%
_{0}\right) $ or $\phi _{T}\left( \mathrm{h}\right) $ with $\mathrm{x}_{0}$
fixed. Given%
\begin{equation*}
\mathrm{q}\in \mathrm{span}\left\{ dx^{1}|_{\mathrm{x}_{T}},\dots ,dx^{l}|_{%
\mathrm{x}_{T}}\right\}
\end{equation*}%
with $1\leq l\leq d$ we shall write
\begin{equation*}
\left( \mathrm{q},0\right) \in \mathrm{span}\left\{ dx^{1}|_{\mathrm{x}%
_{T}},\dots ,dx^{d}|_{\mathrm{x}_{T}}\right\} =\mathcal{T}_{\mathrm{x}%
_{T}}^{\ast }\mathbb{R}^{d}
\end{equation*}%
for \textrm{q} "viewed" as element in $\mathcal{T}_{\mathrm{x}_{T}}^{\ast }%
\mathbb{R}^{d}$. We can describe $H_{0}$ as the set of those $\mathrm{k}%
=\left( k^{1},\dots ,k^{m}\right) \in H$ such that, for any \textrm{q}$\in
\mathrm{span}\left\{ dx^{1}|_{\mathrm{x}_{T}},\dots ,dx^{l}|_{\mathrm{x}%
_{T}}\right\} $,%
\begin{equation*}
\int_{0}^{T}\left\langle \left( \mathrm{q},0\right) ,\Phi _{T\leftarrow t}^{%
\mathrm{h}}\sigma _{i}\left( \phi _{t\leftarrow T}^{\mathrm{h}}\left(
\mathrm{x}_{T}\right) \right) \right\rangle \dot{k}_{t}^{i}dt=0;
\end{equation*}%
where, of course, $\left\{ x^{1},\dots ,x^{d}\right\} $ denotes the standard
coordinate chart of $\mathbb{R}^{d}$ and we tacitly use Einstein's summation
convention.

\begin{lemma}
The linear map $\tilde{\rho}_{\mathrm{h}}:$ $\mathrm{span}\left\{ dx^{1}|_{%
\mathrm{x}_{T}},\dots ,dx^{l}|_{\mathrm{x}_{T}}\right\} \rightarrow H$ given
by%
\begin{equation*}
\tilde{\rho}_{\mathrm{h}}\left( \mathrm{q}\right) :=\left(
\begin{array}{c}
\int_{0}^{\cdot }\left\langle \left( \mathrm{q},0\right) ,\Phi _{T\leftarrow
t}^{\mathrm{h}}\sigma _{1}\left( \phi _{t\leftarrow T}^{\mathrm{h}}\left(
\mathrm{x}_{T}\right) \right) \right\rangle dt \\
\cdots \\
\int_{0}^{\cdot }\left\langle \left( \mathrm{q},0\right) ,\Phi _{T\leftarrow
t}^{\mathrm{h}}\sigma _{m}\left( \phi _{t\leftarrow T}^{\mathrm{h}}\left(
\mathrm{x}_{T}\right) \right) \right\rangle dt%
\end{array}%
\right)
\end{equation*}%
for $i=1,\dots ,m$ and \thinspace $t\in \left[ 0,T\right] $ is one-one with
range $H_{0}^{\bot }$.
\end{lemma}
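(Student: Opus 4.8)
The plan is to recognise $\tilde{\rho}_{\mathrm{h}}(\mathrm{q})$ as the Riesz representative $\langle (\mathrm{q},0),D\phi _{T}(\mathrm{h})\rangle \in H$ that already appeared in the proof that $C(\mathrm{h})$ is invertible, and then read off injectivity from invertibility of $C(\mathrm{h})$ and the range statement from a one-line orthogonality computation.

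First I would recall the formula used there: for every $\mathrm{k}\in H$ and every $\mathrm{p}\in \mathcal{T}_{\mathrm{x}_{T}}^{\ast }\mathbb{R}^{d}$,
\begin{equation*}
\left\langle \left\langle \mathrm{p},D\phi _{T}(\mathrm{h})\right\rangle ,\mathrm{k}\right\rangle _{H}=\int_{0}^{T}\sum_{j=1}^{m}\left\langle \mathrm{p},\Phi _{T\leftarrow t}(\mathrm{h})\sigma _{j}(\mathrm{x}_{t})\right\rangle \dot{k}_{t}^{j}\,dt,
\end{equation*}
so that $\langle \mathrm{p},D\phi _{T}(\mathrm{h})\rangle $ is the element of $H$ whose derivative at time $t$ equals $(\langle \mathrm{p},\Phi _{T\leftarrow t}(\mathrm{h})\sigma _{j}(\mathrm{x}_{t})\rangle )_{j=1,\dots ,m}$. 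Comparing this with the definition of $\tilde{\rho}_{\mathrm{h}}$, and using $\phi _{t\leftarrow T}^{\mathrm{h}}(\mathrm{x}_{T})=\mathrm{x}_{t}$, one gets $\tilde{\rho}_{\mathrm{h}}(\mathrm{q})=\langle (\mathrm{q},0),D\phi _{T}(\mathrm{h})\rangle $, where $(\mathrm{q},0)=(\Pi _{l})^{\ast }\mathrm{q}\in \mathcal{T}_{\mathrm{x}_{T}}^{\ast }\mathbb{R}^{d}$.

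Injectivity is then immediate: if $\tilde{\rho}_{\mathrm{h}}(\mathrm{q})=0$ then $\langle C(\mathrm{h})(\mathrm{q},0),(\mathrm{q},0)\rangle =\Vert \langle (\mathrm{q},0),D\phi _{T}(\mathrm{h})\rangle \Vert _{H}^{2}=0$, and since $C(\mathrm{h})$ is positive semi-definite and invertible, hence positive definite, this forces $(\mathrm{q},0)=0$, i.e. $\mathrm{q}=0$. For the range, recall $D\psi _{T}(\mathrm{h})=(\Pi _{l})_{\ast }\circ D\phi _{T}(\mathrm{h})$; thus for $\mathrm{k}\in H$ one has $\mathrm{k}\in H_{0}=\ker D\psi _{T}(\mathrm{h})$ if and only if $\langle \mathrm{q},D\psi _{T}(\mathrm{h})[\mathrm{k}]\rangle =\langle (\mathrm{q},0),D\phi _{T}(\mathrm{h})[\mathrm{k}]\rangle =\langle \tilde{\rho}_{\mathrm{h}}(\mathrm{q}),\mathrm{k}\rangle _{H}$ vanishes for every $\mathrm{q}\in \mathrm{span}\{dx^{1}|_{\mathrm{x}_{T}},\dots ,dx^{l}|_{\mathrm{x}_{T}}\}$. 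Hence $H_{0}=(\mathrm{range}\,\tilde{\rho}_{\mathrm{h}})^{\bot }$; since $\mathrm{range}\,\tilde{\rho}_{\mathrm{h}}$ is a finite-dimensional, hence closed, subspace of $H$, passing to orthogonal complements gives $H_{0}^{\bot }=\mathrm{range}\,\tilde{\rho}_{\mathrm{h}}$.

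The only point that needs a little care — bookkeeping rather than a genuine obstacle — is the identification of $\mathrm{q}$, an element of the $l$-dimensional cotangent space $\mathrm{span}\{dx^{1}|_{\mathrm{x}_{T}},\dots ,dx^{l}|_{\mathrm{x}_{T}}\}$, with the covector $(\mathrm{q},0)\in \mathcal{T}_{\mathrm{x}_{T}}^{\ast }\mathbb{R}^{d}$, i.e. the fact that the adjoint $(\Pi _{l})^{\ast }$ is exactly this inclusion; once that is pinned down, everything reduces to the Riesz formula above and positive-definiteness of the deterministic Malliavin matrix. One may remark in passing that the two conclusions together yield $\dim H_{0}^{\bot }=l$, i.e. $D\psi _{T}(\mathrm{h})$ is onto, consistently with the fact that the $l\times l$ principal block of the positive-definite matrix $C(\mathrm{h})$ is itself positive definite.
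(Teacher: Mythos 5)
Your proof is correct and takes essentially the same route as the paper: identify $\tilde{\rho}_{\mathrm{h}}(\mathrm{q})$ with the Riesz representative $\langle(\mathrm{q},0),D\phi_T(\mathrm{h})\rangle$, read off injectivity from positive-definiteness of $C(\mathrm{h})$, and obtain the range statement by rewriting the defining orthogonality condition for $H_0$. You are slightly more explicit than the paper in two places — spelling out that $\tilde{\rho}_{\mathrm{h}}(\mathrm{q})=0$ forces $\langle C(\mathrm{h})(\mathrm{q},0),(\mathrm{q},0)\rangle=0$ and hence $\mathrm{q}=0$, and noting that the passage from $H_0=(\operatorname{range}\tilde{\rho}_{\mathrm{h}})^{\perp}$ to $H_0^{\perp}=\operatorname{range}\tilde{\rho}_{\mathrm{h}}$ needs the range to be closed (which it is, being finite-dimensional) — but these are refinements of the same argument, not a different one.
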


\begin{proof}
Since $H_{0}$ is the set of those \textrm{k}$\in H$ such that, for any
\textrm{q}$\in \mathrm{span}\left\{ dx^{1}|_{\mathrm{x}_{T}},\dots ,dx^{l}|_{%
\mathrm{x}_{T}}\right\} $,%
\begin{equation*}
\int_{0}^{T}\left\langle \left( \mathrm{q},0\right) ,\Phi _{T\leftarrow t}^{%
\mathrm{h}}\sigma _{i}\left( \phi _{t\leftarrow T}^{\mathrm{h}}\left(
\mathrm{x}_{T}\right) \right) \right\rangle \dot{k}_{t}^{i}dt=0
\end{equation*}%
we see that $H_{0}$ is the orthogonal complement in $H$ of%
\begin{equation*}
\left\{ \tilde{\rho}_{\mathrm{h}}\left( \mathrm{q}\right) :\mathrm{q}\in
\mathrm{span}\left\{ dx^{1}|_{\mathrm{x}_{T}},\dots ,dx^{l}|_{\mathrm{x}%
_{T}}\right\} \right\} ;
\end{equation*}%
i.e. $H_{0}^{\bot }$ is the range of $\tilde{\rho}_{\mathrm{h}}$.
Invertibility of the\ deterministic Malliavin matrix (along $h$) then
implies $\ker \tilde{\rho}_{\mathrm{h}}=\left\{ 0\right\} $ which shows that
$\tilde{\rho}_{\mathrm{h}}$ is one-one (and also that $H_{0}^{\bot }$ has
dimension $l$).
\end{proof}

\begin{lemma}
\label{LemmaFormOfh}For each minimizer $\mathrm{h}\in \mathcal{K}_{\mathrm{a}%
}^{\min }$, there exists a unique $\mathrm{q}=\mathrm{q}\left( \mathrm{h}%
\right) \in \mathrm{span}\left\{ dx^{1}|_{\mathrm{x}_{T}},\dots ,dx^{l}|_{%
\mathrm{x}_{T}}\right\} $ s.t.%
\begin{equation*}
\mathrm{h}=D\phi _{T}\left( \mathrm{h}\right) ^{\ast }\left[ \left( \mathrm{q%
},0\right) \right] .
\end{equation*}%
(Recall $D\phi _{T}\left( \mathrm{h}\right) :H\rightarrow \mathcal{T}_{%
\mathrm{x}_{T}}\mathbb{R}^{d};$ its adjoint then maps $\mathcal{T}_{\mathrm{x%
}_{T}}^{\ast }\mathbb{R}^{d}\rightarrow H$ where we identify $H^{\ast }$
with $H$.)
\end{lemma}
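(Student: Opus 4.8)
The plan is to read the identity off the first-order optimality condition for $\mathrm{h}$ together with the description of $H_0^{\bot}$ supplied by the preceding lemma. Since $C(\mathrm{h})$ is invertible, $\mathcal{K}_{\mathrm{a}}$ carries a Hilbert manifold structure near $\mathrm{h}$ with $\mathcal{T}_{\mathrm{h}}\mathcal{K}_{\mathrm{a}}=H_0$, and since $\mathrm{h}$ minimizes the smooth functional $I=\tfrac12\Vert\cdot\Vert_H^2$ restricted to $\mathcal{K}_{\mathrm{a}}$, one has $DI(\mathrm{h})[\mathrm{k}]=\langle\mathrm{h},\mathrm{k}\rangle_H=0$ for every $\mathrm{k}\in H_0$. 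Hence $\mathrm{h}\in H_0^{\bot}$.

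By the preceding lemma, $H_0^{\bot}$ is exactly the range of the injective linear map $\tilde\rho_{\mathrm{h}}$ defined on $\mathrm{span}\{dx^1|_{\mathrm{x}_T},\dots,dx^l|_{\mathrm{x}_T}\}$; therefore there is a \emph{unique} $\mathrm{q}=\mathrm{q}(\mathrm{h})$ in that span with $\mathrm{h}=\tilde\rho_{\mathrm{h}}(\mathrm{q})$. It then remains only to identify $\tilde\rho_{\mathrm{h}}(\mathrm{q})$ with $D\phi_T(\mathrm{h})^{\ast}[(\mathrm{q},0)]$. I would do this by testing against an arbitrary $\mathrm{k}=(k^1,\dots,k^m)\in H$: on one side, the variation-of-constants formula for $D\phi_T(\mathrm{h})$ recalled above (cf. \cite[(4.1)]{TW}) gives, after pairing with the covector $(\mathrm{q},0)\in\mathcal{T}^{\ast}_{\mathrm{x}_T}\mathbb{R}^d$, the expression $\int_0^T\sum_{j=1}^m\langle(\mathrm{q},0),\Phi_{T\leftarrow t}(\mathrm{h})\sigma_j(\mathrm{x}_t)\rangle\dot k_t^j\,dt$, which by definition of the adjoint equals $\langle D\phi_T(\mathrm{h})^{\ast}[(\mathrm{q},0)],\mathrm{k}\rangle_H$; on the other side, computing $\langle\tilde\rho_{\mathrm{h}}(\mathrm{q}),\mathrm{k}\rangle_H$ directly from the definition of $\tilde\rho_{\mathrm{h}}$ (and using $\phi^{\mathrm{h}}_{t\leftarrow T}(\mathrm{x}_T)=\mathrm{x}_t$) yields the very same integral. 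Since this holds for all $\mathrm{k}\in H$, we get $\tilde\rho_{\mathrm{h}}(\mathrm{q})=D\phi_T(\mathrm{h})^{\ast}[(\mathrm{q},0)]$, and combining with $\mathrm{h}=\tilde\rho_{\mathrm{h}}(\mathrm{q})$ proves both existence and uniqueness.

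The argument is essentially bookkeeping with adjoints and the explicit kernel representation of $D\phi_T(\mathrm{h})$, so there is no serious obstacle; the one point deserving attention — and the closest thing to a difficulty — is the passage from ``$\mathrm{h}$ is a constrained minimizer'' to ``$DI(\mathrm{h})$ annihilates $H_0$''. This relies on the Hilbert manifold structure of $\mathcal{K}_{\mathrm{a}}$ near $\mathrm{h}$, which is precisely where invertibility of $C(\mathrm{h})$ (via Bismut \cite[Thm 1.5]{Bismut}, already invoked) enters; once that is in place the first-order condition is standard and the remainder of the proof is the identification computation above.
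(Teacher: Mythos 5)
Your proof is correct and follows essentially the same route as the paper: first-order optimality puts $\mathrm{h}$ in $H_0^{\bot}$, the preceding lemma identifies $H_0^{\bot}$ with the range of the injective map $\tilde\rho_{\mathrm{h}}$ (giving existence and uniqueness of $\mathrm{q}$), and the identity $\tilde\rho_{\mathrm{h}}(\mathrm{q})=D\phi_T(\mathrm{h})^{\ast}[(\mathrm{q},0)]$ is read off by pairing against an arbitrary $\mathrm{k}\in H$ via the variation-of-constants kernel. The paper's proof is precisely this argument, so no further comment is needed.
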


\begin{proof}
By assumption, $\mathrm{h}=\left( h^{1},\dots ,h^{m}\right) $ is a
minimizer, and so its differential $I^{\prime }\left( \mathrm{h}\right) $ is
$0$ on $T_{\mathrm{h}}\mathcal{K}_{a}\equiv H_{0}$. It follows that for
every \textrm{k}$\in H_{0}$,%
\begin{equation*}
\left\langle dI\left( \mathrm{h}\right) ,\mathrm{k}\right\rangle
=\int_{0}^{T}\sum_{i=1}^{m}\dot{h}_{t}^{i}\dot{k}_{t}^{i}dt=0
\end{equation*}%
so that $\mathrm{h}$ is in the orthogonal complement of $H_{0}$. It follows
that there exists a (unique, thanks to invertibility of the\ deterministic
Malliavin matrix along $\mathrm{h}$)%
\begin{equation*}
\mathrm{q}=\mathrm{q}\left( \mathrm{h}\right) \in \mathrm{span}\left\{
dx^{1}|_{\mathrm{x}_{T}},\dots ,dx^{l}|_{\mathrm{x}_{T}}\right\}
\end{equation*}%
such that $\mathrm{h}=\tilde{\rho}_{\mathrm{h}}\left( \mathrm{q}\right) $.
It follows that
\begin{equation*}
\dot{h}_{t}^{i}=\left\langle \left( \mathrm{q},0\right) ,\Phi _{T\leftarrow
t}^{\mathrm{h}}\sigma _{i}\left( \phi _{t\leftarrow T}^{\mathrm{h}}\left(
\mathrm{x}_{T}\right) \right) \right\rangle .
\end{equation*}%
It remains to see that, for any \textrm{k}$\in H$,%
\begin{equation*}
\left\langle \mathrm{k,h}\right\rangle _{H}=\left\langle \mathrm{k},D\phi
_{T}\left( \mathrm{h}\right) ^{\ast }\left[ \left( \mathrm{q},0\right) %
\right] \right\rangle _{H}=\left\langle \left( \mathrm{q},0\right) ,D\phi
_{T}\left( \mathrm{h}\right) \left[ \mathrm{k}\right] \right\rangle ,
\end{equation*}%
but this follows immediately from the computation%
\begin{eqnarray*}
\left\langle \mathrm{k,h}\right\rangle _{H} &=&\,\left\langle \mathrm{k},%
\hat{\rho}_{\mathrm{h}}\left( \mathrm{q}\right) \right\rangle _{H} \\
&=&\int_{0}^{T}\dot{k}_{t}^{i}\left\langle \left( \mathrm{q},0\right) ,\Phi
_{T\leftarrow t}^{\mathrm{h}}\sigma _{i}\left( \phi _{t\leftarrow T}^{%
\mathrm{h}}\left( \mathrm{x}_{T}\right) \right) \right\rangle dt \\
&=&\left\langle \left( \mathrm{q},0\right) ,\int_{0}^{T}\Phi _{T\leftarrow
t}^{\mathrm{h}}\sigma _{i}\left( \phi _{t\leftarrow T}^{\mathrm{h}}\left(
\mathrm{x}_{T}\right) \right) \dot{k}_{t}^{i}dt\right\rangle .
\end{eqnarray*}
\end{proof}

\begin{lemma}
\bigskip\ $I^{\prime \prime }\left( \mathrm{h}\right) $ is a bilinear form
on $H_{0}$ given by%
\begin{eqnarray*}
I^{\prime \prime }\left( \mathrm{h}\right) \left[ \mathrm{k},\mathrm{l}%
\right] &=&\left\langle \mathrm{k},\mathrm{l}\right\rangle _{H}-\left\langle
\left( \mathrm{q}\left( \mathrm{h}\right) ,0\right) ,D^{2}\phi _{T}\left(
h\right) \left[ \mathrm{k},\mathrm{l}\right] \right\rangle \\
&=&\left\langle \mathrm{k},\mathrm{l}\right\rangle _{H}-\left( \mathrm{q}%
\left( \mathrm{h}\right) ,D^{2}\psi _{T}\left( h\right) \left[ \mathrm{k},%
\mathrm{l}\right] \right)
\end{eqnarray*}%
where $\left( \mathrm{q}\left( \mathrm{h}\right) ,0\right) \in \mathcal{T}_{%
\mathrm{x}_{T}}^{\ast }\mathbb{R}^{d}$ was constructed in lemma \ref%
{LemmaFormOfh}. In particular, an element $\mathrm{k}\in H_{0}$ is in the
null-space $\mathcal{N}\left( \mathrm{h}\right) $ of $I^{\prime \prime
}\left( \mathrm{h}\right) $,
\begin{eqnarray*}
\mathrm{k} &\in &\mathcal{N}\left( \mathrm{h}\right) :=\left\{ \mathrm{k}\in
H_{0}:I^{\prime \prime }\left( \mathrm{h}\right) \left[ \mathrm{k},\mathrm{k}%
\right] =0\right\} \\
&=&\left\{ \mathrm{k}\in H_{0}:I^{\prime \prime }\left( \mathrm{h}\right) %
\left[ \mathrm{k},\cdot \right] \equiv 0\text{ on }H_{0}\text{ }\right\} .
\end{eqnarray*}%
if and only if (identifying $H^{\ast }$ with $H$)
\begin{equation*}
\left\langle \mathrm{k},\cdot \right\rangle _{H}-\left( \mathrm{p}%
_{T},D^{2}\psi _{T}\left( h\right) \left[ \mathrm{k},\cdot \right] \right)
\in H_{0}^{\bot }.
\end{equation*}
\end{lemma}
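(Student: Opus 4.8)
The plan is to compute the constrained Hessian directly, differentiating the energy twice along curves inside $\mathcal{K}_{\mathrm{a}}$ and using the constraint $\psi _{T}(\cdot )=\mathrm{a}$ to trade the acceleration term for the covector $\mathrm{q}=\mathrm{q}(\mathrm{h})$ furnished by Lemma \ref{LemmaFormOfh}; this is, in effect, the bordered-Hessian computation with $\mathrm{q}$ acting as the Lagrange multiplier of the constraint. Since $C(\mathrm{h})$ is invertible, $\mathcal{K}_{\mathrm{a}}$ carries a Hilbert-manifold structure near $\mathrm{h}$ with $\mathcal{T}_{\mathrm{h}}\mathcal{K}_{\mathrm{a}}=H_{0}$, so for any $\mathrm{k}\in H_{0}$ there is a smooth curve $s\mapsto \mathrm{h}(s)\in \mathcal{K}_{\mathrm{a}}$ with $\mathrm{h}(0)=\mathrm{h}$ and $\dot{\mathrm{h}}(0)=\mathrm{k}$. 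Because $\mathrm{h}$ is a minimizer, hence a critical point, of $I|_{\mathcal{K}_{\mathrm{a}}}$, the number $\frac{d^{2}}{ds^{2}}\big|_{0}I(\mathrm{h}(s))$ depends on $\mathrm{k}$ alone and defines $I^{\prime \prime }(\mathrm{h})[\mathrm{k},\mathrm{k}]$; polarization (using symmetry of second derivatives) recovers the bilinear form. Expanding $I=\tfrac{1}{2}\Vert \cdot \Vert _{H}^{2}$ gives $\frac{d^{2}}{ds^{2}}\big|_{0}\tfrac{1}{2}\Vert \mathrm{h}(s)\Vert _{H}^{2}=\Vert \mathrm{k}\Vert _{H}^{2}+\langle \mathrm{h},\ddot{\mathrm{h}}(0)\rangle _{H}$, which accounts for the $\langle \mathrm{k},\mathrm{l}\rangle _{H}$ term after polarization.

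Next I would identify the acceleration term using the constraint. Differentiating $\psi _{T}(\mathrm{h}(s))\equiv \mathrm{a}$ twice at $s=0$ yields $D\psi _{T}(\mathrm{h})[\ddot{\mathrm{h}}(0)]=-D^{2}\psi _{T}(\mathrm{h})[\mathrm{k},\mathrm{k}]$. By Lemma \ref{LemmaFormOfh}, $\mathrm{h}=D\phi _{T}(\mathrm{h})^{\ast }[(\mathrm{q},0)]$, so by the definition of the adjoint, by $\Pi _{l}\circ D\phi _{T}=D\psi _{T}$, and since $(\mathrm{q},0)$ pairs trivially with the last $d-l$ coordinates,
\[
\langle \mathrm{h},\ddot{\mathrm{h}}(0)\rangle _{H}=\langle (\mathrm{q},0),D\phi _{T}(\mathrm{h})[\ddot{\mathrm{h}}(0)]\rangle =\langle \mathrm{q},D\psi _{T}(\mathrm{h})[\ddot{\mathrm{h}}(0)]\rangle =-\bigl(\mathrm{q},D^{2}\psi _{T}(\mathrm{h})[\mathrm{k},\mathrm{k}]\bigr).
\]
Combining and polarizing gives $I^{\prime \prime }(\mathrm{h})[\mathrm{k},\mathrm{l}]=\langle \mathrm{k},\mathrm{l}\rangle _{H}-(\mathrm{q}(\mathrm{h}),D^{2}\psi _{T}(h)[\mathrm{k},\mathrm{l}])$, and rewriting $(\mathrm{q},D^{2}\psi _{T}[\mathrm{k},\mathrm{l}])=\langle (\mathrm{q},0),D^{2}\phi _{T}[\mathrm{k},\mathrm{l}]\rangle$ — again because $(\mathrm{q},0)$ kills the last $d-l$ components of $D^{2}\phi _{T}$ and $\psi _{T}=\Pi _{l}\phi _{T}$ — gives the first displayed form.

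For the null-space description I would note that $\mathrm{h}$ being a minimizer makes $I^{\prime \prime }(\mathrm{h})$ positive semi-definite on $H_{0}$, so by the Cauchy--Schwarz inequality for semi-definite forms its isotropic vectors coincide with its kernel, which legitimizes the two equivalent definitions of $\mathcal{N}(\mathrm{h})$. Then $\mathrm{k}\in H_{0}$ lies in $\mathcal{N}(\mathrm{h})$ iff the bounded linear functional $\mathrm{l}\mapsto \langle \mathrm{k},\mathrm{l}\rangle _{H}-(\mathrm{q},D^{2}\psi _{T}(h)[\mathrm{k},\mathrm{l}])$ on $H$ vanishes on $H_{0}$; identifying $H^{\ast }$ with $H$ and recalling that $(\mathrm{q},0)=\mathrm{p}_{T}$ in the notation of Proposition \ref{BA1_15}, equation (\ref{ITTcond}), this is exactly the assertion $\langle \mathrm{k},\cdot \rangle _{H}-(\mathrm{p}_{T},D^{2}\psi _{T}(h)[\mathrm{k},\cdot ])\in H_{0}^{\bot }$.

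The inputs that are merely routine — smoothness of $\mathrm{h}\mapsto \phi _{T}(\mathrm{h}):H\rightarrow \mathbb{R}^{d}$ and existence of smooth curves in $\mathcal{K}_{\mathrm{a}}$ realizing any prescribed tangent vector in $H_{0}$ — are classical, the latter coming from the Hilbert-manifold structure already granted by invertibility of $C(\mathrm{h})$. The one point deserving genuine care is the well-definedness of the second variation, i.e. independence of $\frac{d^{2}}{ds^{2}}\big|_{0}I(\mathrm{h}(s))$ from the chosen curve through $\mathrm{h}$ with given velocity; this follows from the critical-point property of $\mathrm{h}$, or can simply be read off the final formula, which involves only $\mathrm{k}=\dot{\mathrm{h}}(0)$ and not the curve's acceleration.
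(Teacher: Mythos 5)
Your proof is correct and follows essentially the same route as the paper: differentiate the energy twice along a curve in $\mathcal{K}_{\mathrm{a}}$, use the twice-differentiated constraint $\psi_T(\cdot)\equiv\mathrm{a}$ to trade the acceleration term for $D^2\psi_T[\mathrm{k},\mathrm{k}]$, and invoke Lemma \ref{LemmaFormOfh} to pair with $(\mathrm{q},0)$; the null-space characterization likewise comes from positive semi-definiteness. You are a bit more explicit than the paper on the routine points (well-definedness of the second variation, polarization, the Cauchy--Schwarz argument for semi-definite forms), but the underlying argument is the same.
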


\begin{proof}
Take a smooth curve $c:\left( -\varepsilon ,\varepsilon \right) \rightarrow $
$\mathcal{K}_{\mathrm{a}}$ s.t. $c\left( 0\right) =\mathrm{h},\dot{c}\left(
0\right) =\mathrm{k}$. Then%
\begin{equation*}
I^{\prime \prime }\left( h\right) \left[ \mathrm{k},\mathrm{k}\right]
=\left\Vert \mathrm{k}\right\Vert _{H}^{2}+\left\langle \mathrm{h},\overset{%
..}{c}\left( 0\right) \right\rangle .
\end{equation*}%
From the previous lemma
\begin{eqnarray*}
I^{\prime \prime }\left( \mathrm{h}\right) \left[ \mathrm{k},\mathrm{k}%
\right] &=&\left\Vert \mathrm{k}\right\Vert _{H}^{2}+\left\langle \left(
\mathrm{q},0\right) ,D\phi _{T}\left( \mathrm{h}\right) \left[ \overset{..}{c%
}\left( 0\right) \right] \,\right\rangle \\
&=&\left\Vert \mathrm{k}\right\Vert _{H}^{2}+\left\langle \mathrm{q},D\psi
_{T}\left( \mathrm{h}\right) \left[ \overset{..}{c}\left( 0\right) \right]
\,\right\rangle .
\end{eqnarray*}%
On the other hand, since $\psi _{T}\left( c\left( t\right) \right) =\Pi
_{l}\phi _{T}\left( c\left( t\right) \right) \equiv \mathrm{a}$ for $t\in
\left( -\varepsilon ,\varepsilon \right) $ we have%
\begin{eqnarray*}
0 &=&\frac{d^{2}}{dt^{2}}\psi _{T}\left( c\left( t\right) \right) |_{t=0} \\
&=&\frac{d}{dt}D\psi _{T}\left( c\left( t\right) \right) \left[ \dot{c}%
\left( t\right) \right] |_{t=0} \\
&=&D^{2}\psi _{T}\left( \mathrm{h}\right) \left[ \mathrm{k},\mathrm{k}\right]
+D\psi _{T}\left( \mathrm{h}\right) \left[ \overset{..}{c}\left( 0\right) %
\right]
\end{eqnarray*}%
and hence%
\begin{eqnarray*}
I^{\prime \prime }\left( \mathrm{h}\right) \left[ \mathrm{k},\mathrm{k}%
\right] &=&\left\Vert \mathrm{k}\right\Vert _{H}^{2}-\left\langle \mathrm{q}%
,D^{2}\psi _{T}\left( \mathrm{h}\right) \left[ \mathrm{k},\mathrm{k}\right]
\,\right\rangle \\
&=&\left\Vert \mathrm{k}\right\Vert _{H}^{2}-\left\langle \left( \mathrm{q}%
,0\right) ,D^{2}\phi _{T}\left( \mathrm{h}\right) \left[ \mathrm{k},\mathrm{k%
}\right] \,\right\rangle .
\end{eqnarray*}%
The characterization of elements in $\mathcal{N}\left( \mathrm{h}\right) $
is then clear. Let us just remark that $\mathcal{N}\left( \mathrm{h}\right) $
is indeed equal to the space $\left\{ \mathrm{k}\in H_{0}:I^{\prime \prime
}\left( \mathrm{h}\right) \left[ \mathrm{k},\cdot \right] \equiv 0\text{ on }%
H_{0}\text{ }\right\} $ as is easily seen from the fact that $I^{\prime
\prime }\left( \mathrm{h}\right) $ is positive semi-definite, since $\mathrm{%
h}$ is (by assumption)\ a minimizer.
\end{proof}

If $U$ is a vector field on $\mathbb{R}^{d}$ we define the push-forward,
under the diffeomorphism $\left( \phi _{s\leftarrow T}^{\mathrm{h}}\right)
^{-1}$, by
\begin{equation*}
\left( \phi _{s\leftarrow T}^{\mathrm{h}}\right) _{\ast }^{-1}U\left(
\mathrm{z}\right) :=\left( \Phi _{s\leftarrow T}^{\mathrm{h}}\right)
^{-1}U\left( \phi _{s\leftarrow T}^{\mathrm{h}}\left( \mathrm{z}\right)
\right) \in \mathcal{T}_{\mathrm{z}}\mathbb{R}^{d}
\end{equation*}%
We shall then need the following known formula, cf. \cite[1.21]{Bismut}
combined with trivial time reparameterization $t\leadsto T-t$;
\begin{equation}
D\left( \phi _{t\leftarrow T}^{\mathrm{h}}\right) _{\ast }^{-1}U\left(
\mathrm{z}\right) \left[ \mathrm{k}\right] =\int_{t}^{T}\left[ \left( \phi
_{s\leftarrow T}^{\mathrm{h}}\right) _{\ast }^{-1}\sigma _{j},\left( \phi
_{t\leftarrow T}^{\mathrm{h}}\right) _{\ast }^{-1}U\right] \left( \mathrm{z}%
\right) \dot{k}_{s}^{j}ds.  \label{Bimsut121}
\end{equation}

\begin{lemma}
For \textrm{k}$,\mathrm{l}\in H$ we have, with $\mathrm{x}_{T}=\phi
_{T}\left( \mathrm{h}\right) $,%
\begin{eqnarray*}
D^{2}\phi _{T}\left( \mathrm{h}\right) \left[ \mathrm{k},\mathrm{l}\right]
&=&\int_{0}^{T}\int_{t}^{T}\left[ \left( \phi _{s\leftarrow T}^{\mathrm{h}%
}\right) _{\ast }^{-1}\sigma _{j},\left( \phi _{t\leftarrow T}^{\mathrm{h}%
}\right) _{\ast }^{-1}\sigma _{i}\right] \left( \mathrm{x}_{T}\right) \dot{k}%
_{s}^{j}\dot{l}_{t}^{i}dsdt \\
&&+\int_{0}^{T}\Phi _{T\leftarrow t}^{\mathrm{h}}\partial \sigma _{i}\left(
\phi _{t\leftarrow T}^{\mathrm{h}}\left( \mathrm{x}_{T}\right) \right) \Phi
_{t\leftarrow T}^{\mathrm{h}}D\phi _{T}\left( \mathrm{h}\right) \left[
\mathrm{k}\right] \dot{l}_{t}^{i}dt.
\end{eqnarray*}
\end{lemma}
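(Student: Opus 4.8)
The plan is to differentiate the first-variation formula for $\phi_T$ a second time. Recall from earlier (the formula $D\phi_T(\mathrm{h})[\mathrm{k}]=\int_0^T\Phi_{T\leftarrow t}(\mathrm{h})\sigma_j(\mathrm{x}_t)\dot k_t^j\,dt$, Einstein convention, with $\mathrm{x}_t=\phi_t(\mathrm{h})$, established in the proof of the first proposition of Section~2) that
\begin{equation*}
D\phi_T(\mathrm{h})[\mathrm{l}]=\int_0^T\Phi_{T\leftarrow t}^{\mathrm{h}}\sigma_i(\mathrm{x}_t)\,\dot l_t^i\,dt=\int_0^T\big(\phi_{t\leftarrow T}^{\mathrm{h}}\big)_{\ast}^{-1}\sigma_i(\mathrm{x}_T)\,\dot l_t^i\,dt,
\end{equation*}
the second identity being the chain rule together with the group property $\phi_{T\leftarrow t}^{\mathrm{h}}\circ\phi_{t\leftarrow T}^{\mathrm{h}}=\mathrm{id}$, so that $\Phi_{T\leftarrow t}^{\mathrm{h}}\sigma_i(\mathrm{x}_t)=D\phi_{T\leftarrow t}^{\mathrm{h}}\big(\phi_{t\leftarrow T}^{\mathrm{h}}(\mathrm{x}_T)\big)\,\sigma_i\big(\phi_{t\leftarrow T}^{\mathrm{h}}(\mathrm{x}_T)\big)$. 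Since $\mathrm{h}\mapsto\phi_T(\mathrm{h})$ is smooth on $H$, the Hessian $D^2\phi_T(\mathrm{h})[\mathrm{k},\mathrm{l}]$ is obtained by differentiating the displayed expression in the direction $\mathrm{k}$; symmetry in $\mathrm{k},\mathrm{l}$ (Schwarz) will serve as a check on the bookkeeping below.

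First I would split the $\mathrm{h}$-dependence of the integrand $\big(\phi_{t\leftarrow T}^{\mathrm{h}}\big)_{\ast}^{-1}\sigma_i(\mathrm{x}_T)$, with $\mathrm{x}_T=\phi_T(\mathrm{h})$, into the dependence of the push-forward vector field and the dependence of the evaluation point. For the field, formula (\ref{Bimsut121}) with $U=\sigma_i$ and the \emph{fixed} point $\mathrm{z}=\mathrm{x}_T$ gives $D\big(\phi_{t\leftarrow T}^{\mathrm{h}}\big)_{\ast}^{-1}\sigma_i(\mathrm{x}_T)[\mathrm{k}]=\int_t^T\big[\big(\phi_{s\leftarrow T}^{\mathrm{h}}\big)_{\ast}^{-1}\sigma_j,\,\big(\phi_{t\leftarrow T}^{\mathrm{h}}\big)_{\ast}^{-1}\sigma_i\big](\mathrm{x}_T)\,\dot k_s^j\,ds$; inserting this under $\int_0^T(\cdots)\dot l_t^i\,dt$ produces the iterated-bracket term in the claimed identity. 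For the evaluation point, differentiating $\mathrm{x}_T=\phi_T(\mathrm{h})$ in the direction $\mathrm{k}$ contributes the spatial Jacobian of the push-forward field applied to $D\phi_T(\mathrm{h})[\mathrm{k}]$; writing that field as $\mathrm{z}\mapsto D\phi_{T\leftarrow t}^{\mathrm{h}}\big(\phi_{t\leftarrow T}^{\mathrm{h}}(\mathrm{z})\big)\sigma_i\big(\phi_{t\leftarrow T}^{\mathrm{h}}(\mathrm{z})\big)$ and differentiating in $\mathrm{z}$, the $\partial\sigma_i$ summand of the Jacobian — using $D\phi_{t\leftarrow T}^{\mathrm{h}}(\mathrm{x}_T)=\Phi_{t\leftarrow T}^{\mathrm{h}}$ and $D\phi_{T\leftarrow t}^{\mathrm{h}}(\mathrm{x}_t)=\Phi_{T\leftarrow t}^{\mathrm{h}}$ along the reference trajectory — gives precisely $\Phi_{T\leftarrow t}^{\mathrm{h}}\partial\sigma_i\big(\phi_{t\leftarrow T}^{\mathrm{h}}(\mathrm{x}_T)\big)\Phi_{t\leftarrow T}^{\mathrm{h}}D\phi_T(\mathrm{h})[\mathrm{k}]$, i.e.\ the second term of the claimed identity.

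The step I expect to be the main obstacle is reconciling the remaining pieces. The Jacobian of the push-forward field carries, besides the $\partial\sigma_i$ summand above, a second-derivative-of-the-flow summand (involving $\partial^2\phi_{T\leftarrow t}^{\mathrm{h}}$); conversely, once the Lie brackets $\big[\big(\phi_{s\leftarrow T}^{\mathrm{h}}\big)_{\ast}^{-1}\sigma_j,\big(\phi_{t\leftarrow T}^{\mathrm{h}}\big)_{\ast}^{-1}\sigma_i\big]$ are expanded, they too carry such flow-Hessian contributions. One has to check that these match, which comes down to keeping straight which portion of the control each flow map sees: $\phi_{T\leftarrow t}^{\mathrm{h}}$ and $\phi_{t\leftarrow T}^{\mathrm{h}}$ depend only on $\mathrm{h}|_{[t,T]}$, whereas $\mathrm{x}_t=\phi_t(\mathrm{h})$ depends only on $\mathrm{h}|_{[0,t]}$, so that the "past" part of the evaluation-point variation and the "future" part captured by (\ref{Bimsut121}) recombine correctly (and, relatedly, one rewrites $\Phi_{t\leftarrow T}^{\mathrm{h}}D\phi_T(\mathrm{h})[\mathrm{k}]-D\phi_t(\mathrm{h})[\mathrm{k}]$ as a pure future integral). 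An equivalent organization, making the same accounting explicit, is to differentiate the $\Phi$-form of the first-variation formula directly and to invoke the variation-of-tangent-flow identity — itself a restatement of (\ref{Bimsut121}) — to rewrite the $\mathrm{k}$-derivative of $\Phi_{T\leftarrow t}^{\mathrm{h}}$; both routes yield the same expression. In either case the remaining work is routine differentiation and collection of terms given (\ref{Bimsut121}), which produces the stated formula.
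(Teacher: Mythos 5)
Your blind proof takes the same route as the paper's: start from the first‑variation formula in its ``push--forward'' form
$D\phi_T(\mathrm{h})[\mathrm{l}]=\int_0^T(\phi_{t\leftarrow T}^{\mathrm{h}})_{\ast}^{-1}\sigma_i(\mathrm{x}_T)\,\dot l_t^i\,dt$
with $\mathrm{x}_T=\phi_T(\mathrm{h})$, perturb $\mathrm{h}\to\mathrm{h}+\varepsilon\mathrm{k}$ (so $\mathrm{x}_T\to\mathrm{x}_T+\varepsilon D\phi_T(\mathrm{h})[\mathrm{k}]+o(\varepsilon)$), and split the resulting $\varepsilon$-derivative of the integrand into a field‑variation at fixed base point (handled via (\ref{Bimsut121})) and an evaluation‑point contribution $\partial_{\mathrm z}[(\phi_{t\leftarrow T}^{\mathrm{h}})_{\ast}^{-1}\sigma_i](\mathrm{x}_T)\cdot D\phi_T(\mathrm{h})[\mathrm{k}]$, whose $\partial\sigma_i$‑summand supplies the second term. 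This is exactly the paper's derivation; the paper simply displays the two‑term intermediate identity and says ``the proof is then finished using (\ref{Bimsut121})''.

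Where your proof differs is in explicitly flagging what you call ``the main obstacle'': the spatial Jacobian of $\mathrm{z}\mapsto(\phi_{t\leftarrow T}^{\mathrm{h}})_{\ast}^{-1}\sigma_i(\mathrm{z})$ has, besides the $\partial\sigma_i$ summand, a second summand involving $D^2\phi_{t\leftarrow T}^{\mathrm{h}}$. You are right that this piece exists — it is not zero in general — and the paper's very terse proof passes over it silently, so you have identified a genuine subtlety. However, your proposed reconciliation is not correct as a plan. You suggest that this flow‑Hessian piece will ``recombine'' with flow‑Hessian contributions hidden inside the Lie brackets once they are expanded, and that the accounting comes down to tracking which part of the control each flow sees. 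But in the decomposition you (and the paper) are using, formula (\ref{Bimsut121}) is applied \emph{exactly} to the field‑variation at fixed base point and produces the bracket as a closed form: the bracket term is one additive summand of the total $\mathrm{k}$-derivative, the evaluation‑point term is the other, and they do not overlap. There is nothing inside the already‑closed bracket integral for the extra $D^2\phi_{t\leftarrow T}^{\mathrm{h}}$ piece to cancel against. Showing that the two displayed terms alone give $D^2\phi_T(\mathrm{h})[\mathrm{k},\mathrm{l}]$ therefore requires a separate argument that the flow‑Hessian summand either vanishes or is encoded elsewhere (e.g.\ in the precise meaning of $\Phi_{T\leftarrow t}^{\mathrm{h}},\Phi_{t\leftarrow T}^{\mathrm{h}}$, or via a restriction such as $\mathrm{k}\in H_0$), and neither your argument nor the paper's proof supplies it. So you correctly diagnosed where the difficulty lies, but ``one has to check that these match'' followed by ``the remaining work is routine'' leaves the step that actually carries the weight unresolved, and the mechanism you propose for resolving it is the wrong one.
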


\begin{proof}
Clearly%
\begin{equation*}
D\phi _{T}\left( \mathrm{h}\right) \left[ \mathrm{l}\right]
=\int_{0}^{T}\Phi _{T\leftarrow t}^{\mathrm{h}}\sigma _{i}\left( \phi
_{t\leftarrow T}^{\mathrm{h}}\left( \mathrm{x}_{T}\right) \right) \dot{l}%
_{t}^{i}dt
\end{equation*}%
where $\phi _{T}\left( \mathrm{h}\right) =\mathrm{x}_{T}$. Perturbing\textrm{%
\ }$\mathrm{h}$ implies
\begin{equation*}
\phi _{T}\left( \mathrm{h}+\varepsilon \mathrm{k}\right) =\mathrm{x}%
_{T}+\varepsilon D\phi _{T}\left( \mathrm{h}\right) \left[ \mathrm{k}\right]
+o\left( \varepsilon \right)
\end{equation*}%
and then%
\begin{equation*}
D\phi _{T}\left( \mathrm{h}+\varepsilon \mathrm{k}\right) \left[ \mathrm{l}%
\right] =\int_{0}^{T}\Phi _{T\leftarrow t}^{\mathrm{h}+\varepsilon \mathrm{k}%
}\sigma _{i}\left( \phi _{t\leftarrow T}^{\mathrm{h}+\varepsilon \mathrm{k}%
}\left( \mathrm{x}_{T}+\varepsilon D\phi _{T}\left( \mathrm{h}\right) \left[
\mathrm{k}\right] +o\left( \varepsilon \right) \right) \right) \dot{l}%
_{t}^{i}dt.
\end{equation*}%
Taking derivatives then leads us to\footnote{%
It should be noted that the term $D\phi _{T}\left( \mathrm{h}\right) \left[ k%
\right] $ is zero for $k\in H_{1}=\ker D\phi _{T}\left( \mathrm{h}\right) $;
in particular the second summand will vanish when $D^{2}\phi _{T}\left(
\mathrm{h}\right) \left[ \cdot ,\cdot \right] $ is restricted to $H_{1}$
i.e. when considering the point-point case $l=d$.}%
\begin{eqnarray*}
D^{2}\phi _{T}\left( \mathrm{h}\right) \left[ \mathrm{k},\mathrm{l}\right]
&=&\int_{0}^{T}D\left\{ \Phi _{T\leftarrow t}^{\mathrm{h}}\sigma _{i}\left(
\phi _{t\leftarrow T}^{\mathrm{h}}\left( \mathrm{x}_{T}\right) \right)
\right\} \left[ \mathrm{k}\right] \dot{l}_{t}^{i}dt \\
&&+\int_{0}^{T}\Phi _{T\leftarrow t}^{\mathrm{h}}\partial \sigma _{i}\left(
\phi _{t\leftarrow T}^{\mathrm{h}}\left( \mathrm{x}_{T}\right) \right) \Phi
_{t\leftarrow T}^{\mathrm{h}}D\phi _{T}\left( \mathrm{h}\right) \left[
\mathrm{k}\right] \dot{l}_{t}^{i}dt.
\end{eqnarray*}%
The proof is then finished using (\ref{Bimsut121}).
\end{proof}

Given \textrm{k}$\in H_{0}$, set%
\begin{equation}
\left(
\begin{array}{c}
0 \\
\eta%
\end{array}%
\right) :=D\phi _{T}\left( \mathrm{h}\right) \left[ \mathrm{k}\right]
\label{etak}
\end{equation}%
where the notation is meant to suggest that%
\begin{equation*}
\eta \in \mathcal{T}_{\mathrm{x}_{T}}N_{\mathrm{a}}\text{ where }N_{\mathrm{a%
}}=\left( \mathrm{a,\cdot }\right) \subset \mathbb{R}^{l}\times \mathbb{R}%
^{d-l}\cong \mathbb{R}^{d}.
\end{equation*}

\begin{proposition}
\label{PropVolterraK}Elements \textrm{k}$\in \mathcal{N}\left( \mathrm{h}%
\right) \subset H_{0}$ are characterized by (inhomogeneous, linear
"backward")\ Volterra equation\footnote{%
... which takes the usual form upon reparameterizing time $\tau \leftarrow
T-t$ ...}%
\begin{eqnarray*}
\dot{k}_{t}^{i} &=&\left\langle \left( \mathrm{q}\left( \mathrm{h}\right)
,0\right) ,\int_{t}^{T}\left[ \left( \phi _{s\leftarrow T}^{\mathrm{h}%
}\right) _{\ast }^{-1}\sigma _{j},\left( \phi _{t\leftarrow T}^{\mathrm{h}%
}\right) _{\ast }^{-1}\sigma _{i}\right] \left( \mathrm{x}_{T}\right) \dot{k}%
_{s}^{j}ds\right\rangle \\
&&+\left\langle \left( \mathrm{q}\left( \mathrm{h}\right) ,0\right) ,\Phi
_{T\leftarrow t}^{\mathrm{h}}\partial \sigma _{i}\left( \phi _{t\leftarrow
T}^{\mathrm{h}}\left( \mathrm{x}_{T}\right) \right) \Phi _{t\leftarrow T}^{%
\mathrm{h}}\left(
\begin{array}{c}
0 \\
\eta%
\end{array}%
\right) \right\rangle \\
&&+\left\langle \left( \theta ,0\right) ,\Phi _{T\leftarrow t}^{\mathrm{h}%
}\sigma _{i}\left( \phi _{t\leftarrow T}^{\mathrm{h}}\left( \mathrm{x}%
_{T}\right) \right) \right\rangle .
\end{eqnarray*}%
where%
\begin{equation*}
\eta =\eta \left( \mathrm{k}\right) \in \mathrm{span}\left\{ \partial
_{l+1}|_{\mathrm{x}_{T}},\dots ,\partial _{d}|_{\mathrm{x}_{T}}\right\} =%
\mathcal{T}_{\mathrm{x}_{T}}N_{\mathrm{a}}
\end{equation*}%
is given by (\ref{etak}) and
\begin{equation*}
\theta =\theta \left( \mathrm{k}\right) \in \mathrm{span}\left\{ dx^{1}|_{%
\mathrm{x}_{T}},\dots ,dx^{l}|_{\mathrm{x}_{T}}\right\} =\mathcal{T}_{%
\mathrm{x}_{T}}^{\ast }N_{\mathrm{a}}^{\perp }.
\end{equation*}
\end{proposition}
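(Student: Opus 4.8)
The plan is to obtain the Volterra equation purely by \emph{assembling} the three lemmas just established and then reading off Riesz representatives in $H$; no new analytic ingredient is needed. Recall from the lemma computing $I''(\mathrm{h})$ that, for $\mathrm{k}\in H_0$, one has $\mathrm{k}\in\mathcal{N}(\mathrm{h})$ if and only if
\begin{equation*}
\langle \mathrm{k},\cdot\rangle_H-\big(\mathrm{q}(\mathrm{h}),\,D^2\psi_T(\mathrm{h})[\mathrm{k},\cdot]\big)\in H_0^\perp,
\end{equation*}
where $\psi_T=\Pi_l\phi_T$ and $\mathrm{q}(\mathrm{h})\in\mathrm{span}\{dx^1|_{\mathrm{x}_T},\dots,dx^l|_{\mathrm{x}_T}\}$ is the covector from Lemma \ref{LemmaFormOfh}, identified when convenient with $(\mathrm{q}(\mathrm{h}),0)\in\mathcal{T}_{\mathrm{x}_T}^\ast\mathbb{R}^d$. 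I will express each of the three terms in this relation as an explicit path in $H$, described by its derivative in $t$ and component $i\in\{1,\dots,m\}$.

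First, the Riesz representative of $\langle\mathrm{k},\cdot\rangle_H$ is $\mathrm{k}$ itself, with derivative $\dot{k}_t^i$. Second, since $\psi_T=\Pi_l\phi_T$ we have $\big(\mathrm{q}(\mathrm{h}),D^2\psi_T(\mathrm{h})[\mathrm{k},\mathrm{l}]\big)=\big\langle(\mathrm{q}(\mathrm{h}),0),D^2\phi_T(\mathrm{h})[\mathrm{k},\mathrm{l}]\big\rangle$; inserting the formula for $D^2\phi_T(\mathrm{h})[\mathrm{k},\mathrm{l}]$ from the preceding lemma, viewing the result as a bounded linear functional of $\mathrm{l}$ (that is, collecting the coefficient of $\dot{l}_t^i$), and using that $\mathrm{k}\in H_0$ forces the first $l$ components of $D\phi_T(\mathrm{h})[\mathrm{k}]$ to vanish, so that $D\phi_T(\mathrm{h})[\mathrm{k}]=\left(\begin{array}{c} 0 \\ \eta \end{array}\right)$ with $\eta=\eta(\mathrm{k})\in\mathcal{T}_{\mathrm{x}_T}N_{\mathrm{a}}$ as in (\ref{etak}), the Riesz representative of $\mathrm{l}\mapsto\big(\mathrm{q}(\mathrm{h}),D^2\psi_T(\mathrm{h})[\mathrm{k},\mathrm{l}]\big)$ has derivative
\begin{equation*}
\left\langle(\mathrm{q}(\mathrm{h}),0),\int_t^T[(\phi_{s\leftarrow T}^{\mathrm{h}})_{\ast}^{-1}\sigma_j,(\phi_{t\leftarrow T}^{\mathrm{h}})_{\ast}^{-1}\sigma_i](\mathrm{x}_T)\dot{k}_s^j\,ds\right\rangle+\left\langle(\mathrm{q}(\mathrm{h}),0),\Phi_{T\leftarrow t}^{\mathrm{h}}\partial\sigma_i(\phi_{t\leftarrow T}^{\mathrm{h}}(\mathrm{x}_T))\Phi_{t\leftarrow T}^{\mathrm{h}}\left(\begin{array}{c} 0 \\ \eta \end{array}\right)\right\rangle.
\end{equation*}
Third, by the lemma on $\tilde\rho_{\mathrm{h}}$, a path $\mathrm{g}\in H$ lies in $H_0^\perp$ precisely when $\mathrm{g}=\tilde\rho_{\mathrm{h}}(\theta)$ for a unique $\theta\in\mathrm{span}\{dx^1|_{\mathrm{x}_T},\dots,dx^l|_{\mathrm{x}_T}\}$, i.e. when $\dot{g}_t^i=\langle(\theta,0),\Phi_{T\leftarrow t}^{\mathrm{h}}\sigma_i(\phi_{t\leftarrow T}^{\mathrm{h}}(\mathrm{x}_T))\rangle$.

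Substituting these three descriptions into the membership relation and equating $i$-th derivatives at $t$ produces exactly the asserted Volterra equation for $\dot{k}_t^i$, with $\eta=\eta(\mathrm{k})$ from (\ref{etak}) and $\theta=\theta(\mathrm{k})$ the unique covector of the third step; the stated membership of $\eta$ in $\mathcal{T}_{\mathrm{x}_T}N_{\mathrm{a}}=\mathrm{span}\{\partial_{l+1}|_{\mathrm{x}_T},\dots,\partial_d|_{\mathrm{x}_T}\}$ and of $\theta$ in $\mathcal{T}_{\mathrm{x}_T}^{\ast}N_{\mathrm{a}}^{\perp}$ is immediate from their construction, and the time reversal $\tau\leftarrow T-t$ (cf.\ the footnote) casts the equation in the standard form of an inhomogeneous linear Volterra equation of the second kind. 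There is no genuine obstacle here; the one point that requires care --- and it is essentially the whole argument --- is the functional-analytic bookkeeping in the second and third steps, namely that one may differentiate the relation ``$\,\in H_0^\perp$'' in $t$ term by term: this uses that $D^2\phi_T(\mathrm{h})$, already presented as an integral against $\dot{l}_t^i$, contributes exactly its integrand to the Riesz representative, that $\mathrm{k}\in H_0$ pins down $D\phi_T(\mathrm{h})[\mathrm{k}]=\left(\begin{array}{c} 0 \\ \eta \end{array}\right)$, and that invertibility of the deterministic Malliavin matrix makes $\tilde\rho_{\mathrm{h}}$ injective, hence $\theta=\theta(\mathrm{k})$ well defined.
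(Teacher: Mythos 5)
Your proposal is correct and follows essentially the same route as the paper's own proof: it assembles the three preceding lemmas (the null-space characterization via $I''(\mathrm{h})$, the explicit formula for $D^2\phi_T(\mathrm{h})[\mathrm{k},\cdot]$ with $D\phi_T(\mathrm{h})[\mathrm{k}]=\left(\begin{smallmatrix}0\\ \eta\end{smallmatrix}\right)$ for $\mathrm{k}\in H_0$, and the description of $H_0^\perp$ as the range of $\tilde\rho_{\mathrm{h}}$) and reads off the $i$-th component of the Riesz representatives to obtain the Volterra equation.
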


\begin{remark}
When \textrm{k}$\in \mathcal{N}\left( \mathrm{h}\right) $ is also in $%
H_{1}=\ker D\phi _{T}\left( \mathrm{h}\right) $ (which is always true in the
point-point setting!) we have $\eta =0;\,$the equation for \textrm{k}
simplifies accordingly and matches precisely the Bismut's equation \cite[1.65%
]{Bismut}.
\end{remark}

\begin{remark}
It is an important step in our argument to single out $\eta $. In fact, we
must not use%
\begin{equation*}
\left(
\begin{array}{c}
0 \\
\eta%
\end{array}%
\right) =\int_{0}^{T}\Phi _{T\leftarrow s}^{\mathrm{h}}\sigma _{j}\left(
\phi _{s\leftarrow T}^{\mathrm{h}}\left( \mathrm{x}_{T}\right) \right) \dot{k%
}_{s}^{j}ds
\end{equation*}%
as integral term for \textrm{\.{k}} in the above integral equation for
\textrm{\.{k}}. Indeed, doing so would lead to a Fredholm integral equation
(of the second kind) for $\dot{k}$ whereas it will be crucial for the
subsequent argument to have a Volterra structure. (Solutions to such
Volterra equations are unique; the same is not true for Fredholm integral
equations.)
\end{remark}

\begin{proof}
For fixed $\mathrm{k}\in H_{0}$, we write%
\begin{equation*}
\left(
\begin{array}{c}
0 \\
\eta%
\end{array}%
\right) :=D\phi _{T}\left( \mathrm{h}\right) \left[ \mathrm{k}\right] .
\end{equation*}%
With slight abuse of notation (Riesz!) the previous result then implies that%
\begin{eqnarray}
\left\{ D^{2}\phi _{T}\left( \mathrm{h}\right) \left[ \mathrm{k},\cdot %
\right] \right\} _{t}^{i} &=&\int_{t}^{T}\left[ \left( \phi _{s\leftarrow
T}^{\mathrm{h}}\right) _{\ast }^{-1}\sigma _{j},\left( \phi _{t\leftarrow
T}^{\mathrm{h}}\right) _{\ast }^{-1}\sigma _{i}\right] \left( \mathrm{x}%
_{T}\right) \dot{k}_{s}^{j}ds  \label{D2phiT} \\
&&+\Phi _{T\leftarrow t}^{\mathrm{h}}\partial \sigma _{i}\left( \phi
_{t\leftarrow T}^{\mathrm{h}}\left( \mathrm{x}_{T}\right) \right) \Phi
_{t\leftarrow T}^{\mathrm{h}}\left(
\begin{array}{c}
0 \\
\eta%
\end{array}%
\right) .  \notag
\end{eqnarray}%
On the other hand, for $\mathrm{k}\in \mathcal{N}\left( \mathrm{h}\right) $,
we know that
\begin{equation*}
\left\langle \mathrm{k},\cdot \right\rangle _{H}-\left\langle \left( \mathrm{%
q}\left( \mathrm{h}\right) ,0\right) ,D^{2}\phi _{T}\left( h\right) \left[
\mathrm{k},\cdot \right] \right\rangle \in H_{0}^{\perp }=\text{range}\left(
\tilde{\rho}_{\mathrm{h}}\right) .
\end{equation*}%
Hence, recalling%
\begin{equation*}
\tilde{\rho}_{\mathrm{h}}\left( \theta \right) =\left\langle \left( \theta
,0\right) ,\Phi _{T\leftarrow t}^{\mathrm{h}}\sigma _{i}\left( \phi
_{t\leftarrow T}^{\mathrm{h}}\left( \mathrm{x}_{T}\right) \right)
\right\rangle ,\,\,\,
\end{equation*}%
it follows from (\ref{D2phiT}) that%
\begin{eqnarray*}
\dot{k}_{t}^{i} &=&\left\langle \left( \mathrm{q}\left( \mathrm{h}\right)
,0\right) ,\int_{t}^{T}\left[ \left( \phi _{s\leftarrow T}^{\mathrm{h}%
}\right) _{\ast }^{-1}\sigma _{j},\left( \phi _{t\leftarrow T}^{\mathrm{h}%
}\right) _{\ast }^{-1}\sigma _{i}\right] \left( \mathrm{x}_{T}\right) \dot{k}%
_{s}^{j}ds\right\rangle \\
&&+\left\langle \left( \mathrm{q}\left( \mathrm{h}\right) ,0\right) ,\Phi
_{T\leftarrow t}^{\mathrm{h}}\partial \sigma _{i}\left( \phi _{t\leftarrow
T}^{\mathrm{h}}\left( \mathrm{x}_{T}\right) \right) \Phi _{t\leftarrow T}^{%
\mathrm{h}}\left(
\begin{array}{c}
0 \\
\eta%
\end{array}%
\right) \right\rangle \\
&&+\left\langle \left( \theta ,0\right) ,\Phi _{T\leftarrow t}^{\mathrm{h}%
}\sigma _{i}\left( \phi _{t\leftarrow T}^{\mathrm{h}}\left( \mathrm{x}%
_{T}\right) \right) \right\rangle
\end{eqnarray*}
\end{proof}

\begin{remark}
If we introduce the orthogonal complement $H_{2}$ so that%
\begin{equation*}
H_{0}=H_{1}\oplus H_{2}\text{ (orthogonal)}
\end{equation*}%
the map%
\begin{equation*}
\mathrm{k}\mapsto D\phi _{T}\left( \mathrm{h}\right) \left[ \mathrm{k}\right]
=\left(
\begin{array}{c}
0 \\
\eta%
\end{array}%
\right) \mapsto \eta
\end{equation*}%
is a bijection from $H_{2}\rightarrow $ $\mathcal{T}_{\mathrm{x}_{T}}N_{%
\mathrm{a}}$.
\end{remark}

\subsection{Jacobi variation}

Again, the starting point is the formula
\begin{eqnarray*}
\dot{h}_{t}^{i} &=&\left\langle \mathrm{p}_{T},\Phi _{T\leftarrow t}^{%
\mathrm{h}}\sigma _{i}\left( \phi _{t\leftarrow T}^{\mathrm{h}}\left(
\mathrm{x}_{T}\right) \right) \right\rangle \\
&=&\left\langle \left( \mathrm{q}(\mathrm{h}),0\right) ,\Phi _{T\leftarrow
t}^{\mathrm{h}}\sigma _{i}\left( \phi _{t\leftarrow T}^{\mathrm{h}}\left(
\mathrm{x}_{T}\right) \right) \right\rangle
\end{eqnarray*}%
where we recall%
\begin{equation*}
\mathrm{p}_{T}=\left( \mathrm{q}(\mathrm{h}),0\right) ,\,\,\mathrm{x}_{T}\in
\left( \mathrm{a},\cdot \right) \equiv N_{\mathrm{a}}\text{.}
\end{equation*}%
We keep $\mathrm{p}_{T}$ and $\mathrm{x}_{T}$ fixed and note that the
Hamiltonian (backward) dynamics are such that%
\begin{equation*}
\pi \mathrm{H}_{t\leftarrow T}\left( \mathrm{x}_{T},\mathrm{p}_{T}\right)
=\phi _{t\leftarrow T}^{\mathrm{h}}\left( \mathrm{x}_{T}\right) .
\end{equation*}%
Replace $\mathrm{p}_{T}$ by $\mathrm{p}_{T}+\varepsilon (\theta ,0)$ above, $%
\mathrm{x}_{T}$ by $\mathrm{x}_{T}+\varepsilon \left(
\begin{array}{c}
0 \\
\eta%
\end{array}%
\right) $ and write $\mathrm{h}\left( \varepsilon \right) =\left(
h^{1}\left( \varepsilon \right) ,\dots ,h^{m}\left( \varepsilon \right)
\right) $ for the according control\footnote{%
... which can be constructed explicitly from the Hamiltonian\ (backward) flow%
\begin{equation*}
\left( \mathrm{x}_{t}\left( \varepsilon \right) ,\mathrm{p}_{t}\left(
\varepsilon \right) \right) :=\mathrm{H}_{t\leftarrow T}\left( \mathrm{x}%
_{T}+\varepsilon \left(
\begin{array}{c}
0 \\
\eta%
\end{array}%
\right) ,\mathrm{p}_{T}+\varepsilon (\theta ,0)\right)
\end{equation*}%
and the usual formula $\mathrm{\dot{h}}\left( \varepsilon \right)
_{t}^{i}=\left\langle \sigma _{i}\left( \mathrm{x}_{t}\left( \varepsilon
\right) \right) ,\mathrm{p}_{t}\left( \varepsilon \right) \right\rangle .$}
which satisfies the relation%
\begin{equation*}
\dot{h}\left( \varepsilon \right) _{t}^{i}=\left\langle \mathrm{p}%
_{T}+\varepsilon \left( \theta ,0\right) ,\Phi _{T\leftarrow t}^{\mathrm{h}%
\left( \varepsilon \right) }\sigma _{i}\left( \phi _{t\leftarrow T}^{\mathrm{%
h}\left( \varepsilon \right) }\left( \mathrm{x}_{T}+\varepsilon \left(
\begin{array}{c}
0 \\
\eta%
\end{array}%
\right) \right) \right) \right\rangle \text{ .}
\end{equation*}%
Define the \textit{Jacobi type variation}%
\begin{equation*}
\mathrm{g}:=\partial _{\left( \theta ,\eta \right) }\mathrm{h}:=\frac{%
\partial \mathrm{h}\left( \varepsilon \right) }{\partial \varepsilon }%
|_{\varepsilon =0}
\end{equation*}%
so that, with $\mathrm{g}=\left( g^{1},\dots ,g^{m}\right) $,
\begin{eqnarray*}
\dot{g}_{t}^{i} &=&\left\langle \mathrm{p}_{T},D\left\{ \Phi _{T\leftarrow
t}^{\mathrm{h}}\sigma _{i}\left( \phi _{t\leftarrow T}^{\mathrm{h}}\left(
\mathrm{x}_{T}\right) \right) \right\} \left[ \mathrm{g}\right] \right\rangle
\\
&&+\left\langle \mathrm{p}_{T},\Phi _{T\leftarrow t}^{\mathrm{h}}\partial
\sigma _{i}\left( \phi _{t\leftarrow T}^{\mathrm{h}}\left( \mathrm{x}%
_{T}\right) \right) \Phi _{t\leftarrow T}^{\mathrm{h}}\left(
\begin{array}{c}
0 \\
\eta%
\end{array}%
\right) \right\rangle \\
&&+\left\langle \left( \theta ,0\right) ,\Phi _{T\leftarrow t}^{\mathrm{h}%
}\sigma _{i}\left( \phi _{t\leftarrow T}^{\mathrm{h}}\left( \mathrm{x}%
_{T}\right) \right) \right\rangle .
\end{eqnarray*}%
With $\mathrm{p}_{T}=\left( \mathrm{q}(\mathrm{h}),0\right) $ and formula (%
\ref{Bimsut121}) we see that $\mathrm{\dot{g}}$ satisfies the identical
(inhomogeneous, linear backward\footnote{%
Trivial reparameterization $t\leadsto T-t$ will bring it in standard
"forward" form.}\ Volterra equation) as the one given for \textrm{\.{k}} in
proposition \ref{PropVolterraK}. By basic uniqueness theory for such
Volterra equations we see that $\mathrm{\dot{g}}=$\textrm{\.{k}} as elements
in $L^{2}\left( \left[ 0,T\right] ,\mathbb{R}^{m}\right) $, and hence $%
\mathrm{g}=$\textrm{k} as elements in $H$.

\begin{proposition}
\label{PropKernelElementsAreJacobiAndConverrse}\bigskip Let $\mathrm{k}\in
\mathcal{N}\left( \mathrm{h}\right) \subset H_{0}$ with associated
parameters
\begin{eqnarray*}
\theta &\in &\mathrm{span}\left\{ dx^{1}|_{\mathrm{x}_{T}},\dots ,dx^{l}|_{%
\mathrm{x}_{T}}\right\} =\mathcal{T}_{\mathrm{x}_{T}}^{\ast }N_{\mathrm{a}%
}^{\perp } \\
\eta &\in &\mathrm{span}\left\{ \partial _{l+1}|_{\mathrm{x}_{T}},\dots
,\partial _{d}|_{\mathrm{x}_{T}}\right\} =\mathcal{T}_{\mathrm{x}_{T}}N_{%
\mathrm{a}}
\end{eqnarray*}%
provided by proposition \ref{PropVolterraK}. (In particular, $\eta $ is
given by $D\phi _{T}\left( \mathrm{h}\right) \left[ \mathrm{k}\right] $, cf.
(\ref{etak}).) Then \textrm{k} can be written in terms of a Jacobi type
variation
\begin{equation*}
\mathrm{k}=\partial _{\left( \theta ,\eta \right) }\mathrm{h}.
\end{equation*}%
Conversely, any Jacobi type variation, with $\theta \in $ $\mathcal{T}_{%
\mathrm{x}_{T}}^{\ast }N_{\mathrm{a}}^{\perp },\eta \in \mathcal{T}_{\mathrm{%
x}_{T}}N_{\mathrm{a}}$ yields an element in $\mathcal{N}\left( \mathrm{h}%
\right) $.
\end{proposition}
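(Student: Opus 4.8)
The plan is to obtain both implications from a single structural fact together with the uniqueness theory for linear Volterra equations. The fact is the computation performed just above the statement: differentiating at $\varepsilon=0$ the identity $\dot h(\varepsilon)^{i}_{t}=\langle \mathrm{p}_{T}+\varepsilon(\theta,0),\,\Phi^{\mathrm{h}(\varepsilon)}_{T\leftarrow t}\sigma_{i}(\phi^{\mathrm{h}(\varepsilon)}_{t\leftarrow T}(\mathrm{x}_{T}+\varepsilon\binom{0}{\eta}))\rangle$, and using $\mathrm{p}_{T}=(\mathrm{q}(\mathrm{h}),0)$ together with formula (\ref{Bimsut121}) to turn $D\{(\phi^{\mathrm{h}}_{s\leftarrow T})_{\ast}^{-1}\sigma_{i}\}[\mathrm{g}]$ into the iterated Lie--bracket kernel, shows that $\dot{\mathrm{g}}$ satisfies exactly the three--term backward Volterra equation of Proposition \ref{PropVolterraK}, with the given parameters $(\theta,\eta)$. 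After the substitution $\tau\leadsto T-t$ this is a linear Volterra equation of the second kind on $[0,T]$ with bounded kernel and $L^{2}$ source, hence has at most one solution in $L^{2}([0,T],\mathbb{R}^{m})$; and since the norm on $H$ is the $L^{2}$-norm of the derivative, this pins down $\mathrm{g}$ in $H$.

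For the forward inclusion, take $\mathrm{k}\in\mathcal{N}(\mathrm{h})$ and let $(\theta,\eta)=(\theta(\mathrm{k}),\eta(\mathrm{k}))$ be the pair supplied by Proposition \ref{PropVolterraK}, so $\theta\in\mathcal{T}^{\ast}_{\mathrm{x}_{T}}N_{\mathrm{a}}^{\perp}$ and $\eta=D\phi_{T}(\mathrm{h})[\mathrm{k}]\in\mathcal{T}_{\mathrm{x}_{T}}N_{\mathrm{a}}$. The Jacobi--type variation $\mathrm{g}:=\partial_{(\theta,\eta)}\mathrm{h}$ is a legitimate object here: $\mathrm{x}_{T}+\varepsilon\binom{0}{\eta}$ stays on $N_{\mathrm{a}}$ and $\mathrm{p}_{T}+\varepsilon(\theta,0)$ stays transversal, so the terminal conditions (\ref{ITTcond}) persist along the perturbation and the backward Hamiltonian flow exists. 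By the fact above $\dot{\mathrm{g}}$ solves the same Volterra equation as $\dot{\mathrm{k}}$; by uniqueness, $\mathrm{g}=\mathrm{k}$. Hence $\mathrm{k}=\partial_{(\theta,\eta)}\mathrm{h}$, and the parameter $\eta$ is automatically consistent with $D\phi_{T}(\mathrm{h})[\mathrm{k}]$, since $\mathrm{g}=\mathrm{k}$.

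For the converse, fix $\theta\in\mathcal{T}^{\ast}_{\mathrm{x}_{T}}N_{\mathrm{a}}^{\perp}$, $\eta\in\mathcal{T}_{\mathrm{x}_{T}}N_{\mathrm{a}}$, and set $\mathrm{g}:=\partial_{(\theta,\eta)}\mathrm{h}$; by the same computation $\dot{\mathrm{g}}$ solves the Volterra equation of Proposition \ref{PropVolterraK} with parameters $(\theta,\eta)$. Reading that proposition as a characterisation of $\mathcal{N}(\mathrm{h})$, it remains to verify the single identity $D\phi_{T}(\mathrm{h})[\mathrm{g}]=\binom{0}{\eta}$, which gives at once $\mathrm{g}\in H_{0}$ and the consistency $\eta=D\phi_{T}(\mathrm{h})[\mathrm{g}]$. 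Writing $\mathrm{x}_{0}(\varepsilon):=\pi\mathrm{H}_{0\leftarrow T}(\mathrm{x}_{T}+\varepsilon\binom{0}{\eta},\mathrm{p}_{T}+\varepsilon(\theta,0))$ for the (varying) base point of $\phi^{\mathrm{h}(\varepsilon)}$, so that $\phi^{\mathrm{h}(\varepsilon)}_{T\leftarrow 0}(\mathrm{x}_{0}(\varepsilon))=\mathrm{x}_{T}+\varepsilon\binom{0}{\eta}$, a first--order expansion of $\phi^{\mathrm{h}(\varepsilon)}_{T\leftarrow 0}$ about $\mathrm{x}_{0}(\varepsilon)$ gives $D\phi_{T}(\mathrm{h})[\mathrm{g}]=\binom{0}{\eta}-\Phi^{\mathrm{h}}_{T\leftarrow 0}\big[\partial_{\varepsilon}\mathrm{x}_{0}(\varepsilon)|_{\varepsilon=0}\big]$. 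So the identity holds precisely when $\partial_{\varepsilon}\mathrm{x}_{0}(\varepsilon)|_{\varepsilon=0}=0$, i.e. when $(\eta,\theta)$ lies in the kernel at the origin of the map $(\mathfrak{z},\mathfrak{q})\mapsto\pi\mathrm{H}_{0\leftarrow T}(\mathrm{x}_{T}+\binom{0}{\mathfrak{z}},\mathrm{p}_{T}+(\mathfrak{q},0))$ — the very restriction selecting the Jacobi--type variations tangent to $\mathcal{K}_{\mathrm{a}}$ at $\mathrm{h}$, and whose triviality for all $(\eta,\theta)\ne0$ is exactly non-focality of $\mathrm{x}_{0}$ for $N_{\mathrm{a}}$ along $\mathrm{h}$. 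Under that restriction, Proposition \ref{PropVolterraK} then returns $\mathrm{g}\in\mathcal{N}(\mathrm{h})$.

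I expect the main obstacle to be precisely this last bookkeeping in the converse: $\eta$ plays two roles — a free perturbation of the terminal point, and the intrinsic vector $D\phi_{T}(\mathrm{h})[\mathrm{k}]$ attached to a given $\mathrm{k}$ — and, because the initial point in the definition of $\mathcal{K}_{\mathrm{a}}$ is held fixed, reconciling the two forces the condition $\partial_{\varepsilon}\mathrm{x}_{0}(\varepsilon)|_{\varepsilon=0}=0$. Everything else is mechanical, and it is here that the remark after Proposition \ref{PropVolterraK} is essential: the equation must retain $\eta$ as data rather than substitute $\eta=\int_{0}^{T}\Phi^{\mathrm{h}}_{T\leftarrow s}\sigma_{j}(\cdot)\,\dot{k}^{j}_{s}\,ds$, for otherwise one would face a Fredholm equation, whose solutions need not be unique, and the uniqueness step underpinning the whole argument would fail.
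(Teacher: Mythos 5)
Your forward direction matches the paper's argument exactly: the Jacobi-type variation $\mathrm{g}=\partial_{(\theta,\eta)}\mathrm{h}$ satisfies the same backward Volterra equation as $\dot{\mathrm{k}}$, and uniqueness for linear Volterra equations of the second kind pins down $\mathrm{g}=\mathrm{k}$ in $H$.

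For the converse, the paper reduces to the single identity $D\phi_T(\mathrm{h})[\mathrm{g}]=\binom{0}{\eta}$ and declares it ``an easy exercise.'' You actually carry it out, and your formula
\begin{equation*}
D\phi_T(\mathrm{h})[\mathrm{g}] = \binom{0}{\eta} - \Phi^{\mathrm{h}}_{T\leftarrow 0}\Big[\partial_\varepsilon\mathrm{x}_0(\varepsilon)\big|_{\varepsilon=0}\Big],\qquad \mathrm{x}_0(\varepsilon):=\pi\mathrm{H}_{0\leftarrow T}\Big(\mathrm{x}_T+\varepsilon\binom{0}{\eta},\,\mathrm{p}_T+\varepsilon(\theta,0)\Big),
\end{equation*}
obtained by differentiating $\phi^{\mathrm{h}(\varepsilon)}_{T\leftarrow 0}(\mathrm{x}_0(\varepsilon))=\mathrm{x}_T+\varepsilon\binom{0}{\eta}$ via the chain rule, is correct. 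This exposes a genuine subtlety the paper's wording passes over: the required identity holds if and only if $\partial_\varepsilon\mathrm{x}_0(\varepsilon)|_{\varepsilon=0}=0$, i.e.\ precisely when $(\eta,\theta)$ lies in the kernel of the $d\times d$ matrix appearing in the non-focality condition (iii'). In particular the converse, read unconditionally, is not correct: already in the flat scalar point-point case ($d=l=1$, $\dot\phi=\dot h$, $\eta=0$) one has $\mathrm{g}_t=\theta t$, which is nonzero and not even in $H_0$, whereas $\mathcal{N}(\mathrm{h})=\{0\}$; and indeed there $\partial_\varepsilon\mathrm{x}_0(\varepsilon)|_{\varepsilon=0}=-T\theta\neq 0$. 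The restricted converse you prove is, however, precisely what the subsequent corollary invokes in its ``$\Leftarrow$'' step, where the hypothesis $\partial_\varepsilon|_{\varepsilon=0}\pi\mathrm{H}_{0\leftarrow T}(\cdots)=0$ is assumed. So you have supplied the step the paper leaves as an exercise, and in doing so identified a small imprecision in the statement of the converse: the restriction $\partial_\varepsilon\mathrm{x}_0(\varepsilon)|_{\varepsilon=0}=0$ must be imposed for the implication to hold, and your reading is the operationally correct one.
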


\begin{proof}
The first part follows from the above discussion and it only remains to
prove the converse part. Since we have seen that every Jacobi type variation
$\mathrm{g}:=\partial _{\left( \theta ,\eta \right) }\mathrm{h}$ satisfies
the appropriate Volterra equation, cf. proposition \ref{PropVolterraK}, we
only need to check
\begin{equation*}
\left(
\begin{array}{c}
0 \\
\eta%
\end{array}%
\right) =D\phi _{T}\left( \mathrm{h}\right) \left[ \mathrm{g}\right]
\end{equation*}%
and we leave this as an easy exercise to the reader.
\end{proof}

Recall that we say that $\mathrm{x}_{0}$ is \textit{non-focal} for $\left(
\mathrm{a},\cdot \right) \equiv N_{\mathrm{a}}$ along $\mathrm{h}$ if for
all $\theta \in \mathcal{T}_{\mathrm{x}_{T}}^{\ast }N_{\mathrm{a}}^{\perp }$%
, $\eta \in \mathcal{T}_{\mathrm{x}_{T}}N_{\mathrm{a}}$
\begin{equation*}
\partial _{\varepsilon }|_{\varepsilon =0}\pi \mathrm{H}_{0\leftarrow
T}\left( \mathrm{x}_{T}+\varepsilon \left(
\begin{array}{c}
0 \\
\eta%
\end{array}%
\right) ,\mathrm{p}_{T}+\varepsilon \left( \theta ,0\right) \right)
=0\implies \left( \theta ,\eta \right) =0.
\end{equation*}%
In the point-point setting (i.e. $l=d$ so that $\theta \in \mathcal{T}_{%
\mathrm{x}_{T}}^{\ast }\mathbb{R}^{d},\,\eta =0$) the criterion reduces to
\begin{equation*}
\partial _{\varepsilon }|_{\varepsilon =0}\pi \mathrm{H}_{0\leftarrow
T}\left( \mathrm{x}_{T},\mathrm{p}_{T}+\varepsilon \theta \right) =0\implies
\theta =0\text{;}
\end{equation*}%
disregarding time reparameterization $t\leadsto T-t$ and the fact that our
setup allows for a non-zero drift vector field, this is precisely Bismut's
non-conjugacy condition \cite[p.50]{Bismut}.

\begin{corollary}
The point $\mathrm{x}_{0}$ is \textit{non-focal} for $\left( \mathrm{a}%
,\cdot \right) \equiv N_{\mathrm{a}}$ along $\mathrm{h}$ if and only if $%
I^{\prime \prime }\left( \mathrm{h}\right) $, i.e. the second derivative of $%
\left. \left\Vert \cdot \right\Vert _{H}^{2}\right\vert _{\mathcal{K}_{%
\mathrm{a}}}$ at the minimizer $\mathrm{h}$, viewed as quadratic form on $%
H_{0}=\ker D\left( \Pi _{l}\phi _{T}\right) \left( \mathrm{h}\right) $, is
non-degenerate, i.e.%
\begin{equation*}
\mathcal{N}\left( \mathrm{h}\right) \equiv \left\{ 0\right\} .
\end{equation*}
\end{corollary}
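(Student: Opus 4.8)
The plan is to obtain the corollary --- which is exactly the equivalence (iii)$\Leftrightarrow$(iii$'$) of Proposition~\ref{BA1_18}, so that proving it completes the proof of that proposition --- by combining the correspondence of Proposition~\ref{PropKernelElementsAreJacobiAndConverrse} with the definition of non-focality recalled just above. The one extra ingredient I would record first is a chain-rule identity making the link explicit. Writing $\mathrm{x}_{0}(\varepsilon):=\pi\mathrm{H}_{0\leftarrow T}\!\left(\mathrm{x}_{T}+\varepsilon\left(\begin{array}{c}0\\ \eta\end{array}\right),\mathrm{p}_{T}+\varepsilon(\theta,0)\right)$ for the (in general moving) starting point of the perturbed Hamiltonian trajectory used to build $\mathrm{h}(\varepsilon)$, one has $\phi_{T\leftarrow 0}^{\mathrm{h}(\varepsilon)}(\mathrm{x}_{0}(\varepsilon))=\mathrm{x}_{T}+\varepsilon\left(\begin{array}{c}0\\ \eta\end{array}\right)$; differentiating at $\varepsilon=0$, with both the control and its argument varying and $\Phi_{T\leftarrow 0}^{\mathrm{h}}$ the differential of $\phi_{T\leftarrow 0}^{\mathrm{h}}$ at $\mathrm{x}_{0}$, gives
\begin{equation*}
D\phi_{T}(\mathrm{h})\left[\partial_{(\theta,\eta)}\mathrm{h}\right]=\left(\begin{array}{c}0\\ \eta\end{array}\right)-\Phi_{T\leftarrow 0}^{\mathrm{h}}\,\partial_{\varepsilon}|_{\varepsilon=0}\,\pi\mathrm{H}_{0\leftarrow T}\!\left(\mathrm{x}_{T}+\varepsilon\left(\begin{array}{c}0\\ \eta\end{array}\right),\mathrm{p}_{T}+\varepsilon(\theta,0)\right).
\end{equation*}
Since $\Phi_{T\leftarrow 0}^{\mathrm{h}}$ is invertible, this identity says that a pair $(\theta,\eta)$ (with $\theta\in\mathcal{T}_{\mathrm{x}_{T}}^{\ast}N_{\mathrm{a}}^{\perp}$, $\eta\in\mathcal{T}_{\mathrm{x}_{T}}N_{\mathrm{a}}$) is a \emph{focal direction} --- i.e. the displayed $\varepsilon$-derivative occurring in the non-focality condition vanishes --- precisely when $D\phi_{T}(\mathrm{h})[\partial_{(\theta,\eta)}\mathrm{h}]=(0,\eta)$, equivalently when $\partial_{(\theta,\eta)}\mathrm{h}\in H_{0}=\ker D(\Pi_{l}\circ\phi_{T})(\mathrm{h})$.

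With this identity I would then run the two implications. If $\mathrm{x}_{0}$ is \emph{not} non-focal, choose a focal direction $(\theta,\eta)\neq0$; the identity gives $D\phi_{T}(\mathrm{h})[\mathrm{g}]=(0,\eta)$, so $\mathrm{g}:=\partial_{(\theta,\eta)}\mathrm{h}\in H_{0}$, and since $\mathrm{g}$ always solves the backward Volterra equation of Proposition~\ref{PropVolterraK} (as checked in the Jacobi-variation discussion above) that proposition yields $\mathrm{g}\in\mathcal{N}(\mathrm{h})$; finally $\mathrm{g}\neq0$, because $\mathrm{g}=0$ would force $(0,\eta)=D\phi_{T}(\mathrm{h})[0]=0$, hence $\eta=0$, after which the Volterra equation collapses to $\tilde{\rho}_{\mathrm{h}}(\theta)\equiv0$ and thus $\theta=0$ by injectivity of $\tilde{\rho}_{\mathrm{h}}$ (which rests on invertibility of $C(\mathrm{h})$), contradicting $(\theta,\eta)\neq0$. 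So $\mathcal{N}(\mathrm{h})\neq\{0\}$. Conversely, given $0\neq\mathrm{k}\in\mathcal{N}(\mathrm{h})$, Proposition~\ref{PropKernelElementsAreJacobiAndConverrse} writes $\mathrm{k}=\partial_{(\theta,\eta)}\mathrm{h}$ with $(0,\eta)=D\phi_{T}(\mathrm{h})[\mathrm{k}]$ as in (\ref{etak}); comparing this with the identity forces $\Phi_{T\leftarrow 0}^{\mathrm{h}}\,\partial_{\varepsilon}|_{\varepsilon=0}\,\pi\mathrm{H}_{0\leftarrow T}(\dots)=0$, so $(\theta,\eta)$ is a focal direction, and it is nonzero because $\partial_{(0,0)}\mathrm{h}=0\neq\mathrm{k}$; hence $\mathrm{x}_{0}$ is not non-focal. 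Together these give $\mathcal{N}(\mathrm{h})\equiv\{0\}\ \Leftrightarrow\ \mathrm{x}_{0}$ non-focal.

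The only step calling for real care is the chain-rule identity and the attendant bookkeeping of which Jacobi variations actually lie in $H_{0}$; the remainder is assembly of Propositions~\ref{PropVolterraK} and~\ref{PropKernelElementsAreJacobiAndConverrse} together with injectivity of $\tilde{\rho}_{\mathrm{h}}$. As a sanity check, when $l=d$ one has $\eta\equiv0$, $H_{0}=H_{1}=\ker D\phi_{T}(\mathrm{h})$, the starting point is genuinely fixed along the variation, and the identity degenerates to $D\phi_{T}(\mathrm{h})[\partial_{\theta}\mathrm{h}]=-\Phi_{T\leftarrow 0}^{\mathrm{h}}\,\partial_{\varepsilon}|_{\varepsilon=0}\,\pi\mathrm{H}_{0\leftarrow T}(\mathrm{x}_{T},\mathrm{p}_{T}+\varepsilon\theta)$, so $\partial_{\theta}\mathrm{h}\in\ker D\phi_{T}(\mathrm{h})$ iff $\theta$ is a conjugate direction; the equivalence above then specialises to Bismut's characterisation of non-conjugacy by non-degeneracy of the index form.
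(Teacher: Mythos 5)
Your argument is correct and follows essentially the same route as the paper: you invoke Propositions~\ref{PropVolterraK} and~\ref{PropKernelElementsAreJacobiAndConverrse} to move between null-space elements and Jacobi variations, and your chain-rule identity is a reorganised form of the very same computation (differentiating $\phi^{\mathrm{h}(\varepsilon)}_{0\leftarrow T}(\mathrm{x}_T+\varepsilon(0,\eta))$) that the paper carries out in-line; the injectivity of $\tilde\rho_{\mathrm{h}}$ then closes both directions exactly as in the paper, one of which you phrase as a contrapositive. One small caveat: the parenthetical claim that $(\theta,\eta)$ being focal is ``equivalently'' characterised by $\partial_{(\theta,\eta)}\mathrm{h}\in H_0$ overstates things --- $\partial_{(\theta,\eta)}\mathrm{h}\in H_0$ only says $D\phi_T(\mathrm{h})[\partial_{(\theta,\eta)}\mathrm{h}]=(0,\eta')$ for \emph{some} $\eta'$, and your identity then gives $\Phi^{\mathrm{h}}_{T\leftarrow 0}\partial_\varepsilon|_{\varepsilon=0}\pi\mathrm{H}_{0\leftarrow T}(\cdots)=(0,\eta-\eta')$, which need not vanish --- but since your two implications only use the direct, correct implication $D\phi_T(\mathrm{h})[\partial_{(\theta,\eta)}\mathrm{h}]=(0,\eta)\Rightarrow\partial_{(\theta,\eta)}\mathrm{h}\in H_0$, the proof itself is unaffected.
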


\begin{proof}
"$\Rightarrow $": Take $\mathrm{k}\in \mathcal{N}\left( \mathrm{h}\right) ;$
from proposition \ref{PropKernelElementsAreJacobiAndConverrse}%
\begin{equation*}
\mathrm{k}=\partial _{\left( \theta ,\eta \right) }\mathrm{h}\equiv \partial
_{\varepsilon }|_{\varepsilon =0}\mathrm{h}\left( \varepsilon \right)
\end{equation*}%
for suitable $\theta \in \mathcal{T}_{\mathrm{x}_{T}}^{\ast }N_{\mathrm{a}%
}^{\perp },\eta \in \mathcal{T}_{\mathrm{x}_{T}}N_{\mathrm{a}}$; in fact,%
\begin{equation*}
\left(
\begin{array}{c}
0 \\
\eta%
\end{array}%
\right) =D\phi _{T\leftarrow 0}^{\mathrm{h}}\left( \mathrm{x}_{0}\right) %
\left[ \mathrm{k}\right] .
\end{equation*}%
The criterion says that if%
\begin{equation*}
\partial _{\varepsilon }|_{\varepsilon =0}\pi \mathrm{H}_{0\leftarrow
T}\left( \mathrm{x}_{T}+\varepsilon \left(
\begin{array}{c}
0 \\
\eta%
\end{array}%
\right) ,\mathrm{p}_{T}+\varepsilon \left( \theta ,0\right) \right)
=\partial _{\varepsilon }|_{\varepsilon =0}\left( \phi _{0\leftarrow T}^{%
\mathrm{h}\left( \varepsilon \right) }\right) \left( \mathrm{x}%
_{T}+\varepsilon \left(
\begin{array}{c}
0 \\
\eta%
\end{array}%
\right) \right)
\end{equation*}%
equals zero then $\left( \theta ,\eta \right) $ must be zero. But this is
indeed the case here since
\begin{eqnarray*}
&&\partial _{\varepsilon }|_{\varepsilon =0}\left( \phi _{0\leftarrow T}^{%
\mathrm{h}\left( \varepsilon \right) }\right) \left( \mathrm{x}%
_{T}+\varepsilon \left(
\begin{array}{c}
0 \\
\eta%
\end{array}%
\right) \right) \\
&=&D\left\{ \phi _{0\leftarrow T}^{\mathrm{h}}\left( \mathrm{x}_{T}\right)
\right\} \left[ \partial _{\varepsilon }|_{\varepsilon =0}\mathrm{h}\left(
\varepsilon \right) \right] +\Phi _{0\leftarrow T}^{\mathrm{h}}\left(
\begin{array}{c}
0 \\
\eta%
\end{array}%
\right) \\
&=&D\left\{ \phi _{0\leftarrow T}^{\mathrm{h}}\left( \mathrm{x}_{T}\right)
\right\} \left[ \mathrm{k}\right] +\Phi _{0\leftarrow T}^{\mathrm{h}}D\phi
_{T\leftarrow 0}^{\mathrm{h}}\left( \mathrm{x}_{0}\right) \left[ \mathrm{k}%
\right] \\
&=&D\left\{ \phi _{0\leftarrow T}^{\mathrm{h}}\circ \phi _{T\leftarrow 0}^{%
\mathrm{h}}\left( \mathrm{x}_{0}\right) \right\} \left[ \mathrm{k}\right] \\
&=&0.
\end{eqnarray*}%
We thus conclude that the directional derivative $\partial _{\left( \theta
,\eta \right) }\mathrm{h}$, which of course depends linearly on $\left(
\theta ,\eta \right) $, vanishes. It then follows that \textrm{k}$=\partial
_{\left( \theta ,\eta \right) }\mathrm{h}=0$ which is what we wanted to show.%
\newline
"$\Leftarrow $": Assume there exists $\left( \theta ,\eta \right) \neq 0$ so
that%
\begin{equation*}
\partial _{\varepsilon }|_{\varepsilon =0}\pi \mathrm{H}_{0\leftarrow
T}\left( \mathrm{x}_{T}+\varepsilon \left(
\begin{array}{c}
0 \\
\eta%
\end{array}%
\right) ,\mathrm{p}_{T}+\varepsilon \left( \theta ,0\right) \right) =0.
\end{equation*}%
Then \textrm{k}$:=\partial _{\left( \theta ,\eta \right) }\mathrm{h}$ yields
an element in the null-space $\mathcal{N}\left( \mathrm{h}\right) $. We need
to see that \textrm{k} is non-zero. Assume otherwise, i.e. \textrm{k}$=0$.
Then $D\phi _{T\leftarrow 0}^{\mathrm{h}}\left( \mathrm{x}_{0}\right) \left[
\mathrm{k}\right] =0$ and hence also $\eta =0$. From the Volterra equation
for \textrm{k} we see that%
\begin{equation*}
0=\left\langle \left( \theta ,0\right) ,\Phi _{T\leftarrow t}^{\mathrm{h}%
}\sigma _{i}\left( \phi _{t\leftarrow T}^{h}\left( \mathrm{x}_{T}\right)
\right) \right\rangle =\tilde{\rho}_{\mathrm{h}}\left( \left( \theta
,0\right) \right) .
\end{equation*}%
But $\ker \tilde{\rho}_{\mathrm{h}}$ was seen to be trivial and so $\theta
=0 $; in contradiction to assumption $\left( \theta ,\eta \right) \neq 0$.
\end{proof}

\section{Examples}

We now consider a number of examples which illustrate the use of our main
result, theorem \ref{thm:MainThm}. As already noted in remark \ref{rem:loc},
the boundedness assumptions on the vector fields (i.e. SDE coefficients) are
rarely met. It is, however, easy enough in all the following examples to check the
localization estimate (\ref{Assloc}) so that application of theorem \ref{thm:MainThm} is
indeed fully justified.

\subsection{Scalar Ornstein-Uhlenbeck process}

As a warmup, consider a $1$-dimensional Ornstein-Uhlenbeck process $%
Y^{\varepsilon }$ with small noise parameter $\varepsilon $. (Since there is
no projection here, there is no need to distinguish between $l$-dimensional $%
\mathrm{Y}^{\varepsilon }$ and $d$-dimensional $\mathrm{X}^{\varepsilon }$.)
Fix $\gamma >0$ and assume dynamics of the form%
\begin{equation*}
dY_{t}^{\varepsilon }=\left( \alpha \varepsilon +\beta Y_{t}^{\varepsilon
}\right) dt+\gamma \varepsilon dW_{t},\,\,\,Y_{0}^{\varepsilon }=\varepsilon
\hat{y}_{0}\in \mathbb{R}
\end{equation*}%
with explicit solution at time $T>0$ given by the variation of constants
formula
\begin{equation*}
Y_{T}^{\varepsilon }=\varepsilon \hat{y}_{0}e^{\beta T}+\varepsilon \alpha
\int_{0}^{T}e^{\beta \left( T-t\right) }dt+\varepsilon \gamma
\int_{0}^{T}e^{\beta \left( T-t\right) }dW_{t}.
\end{equation*}%
In particular, using It\^{o}'s isometry, $Y_{T}^{\varepsilon }\sim N\left(
\varepsilon \mu ,\varepsilon ^{2}\sigma ^{2}\right) $ with%
\begin{equation}
\mu :=\hat{y}_{0}e^{\beta T}+\alpha \int_{0}^{T}e^{\beta \left( T-t\right)
}dt,\,\,\sigma ^{2}:=\gamma ^{2}\int_{0}^{T}e^{2\beta \left( T-t\right) }dt
\label{1DOUmu}
\end{equation}%
and so $Y_{T}^{\varepsilon }$ admits a density of the form
\begin{equation}
f^{\varepsilon }\left( y,T\right) =\frac{1}{\varepsilon \sigma \sqrt{2\pi }}%
\exp \left( -\frac{\left( y-\varepsilon \mu \right) ^{2}}{2\varepsilon
^{2}\sigma ^{2}}\right) \equiv \varepsilon ^{-1}e^{-c_{1}/\varepsilon
^{2}}e^{c_{2}/\varepsilon }\left( c_{0}+O\left( \varepsilon \right) \right) ,
\label{OUdensity1D}
\end{equation}%
where, in particular,
\begin{equation}
c_{1}=\frac{1}{2}\frac{y^{2}}{\sigma ^{2}}\text{ and }c_{2}=\mu y/\sigma ^{2}%
\text{.}  \label{OUexample_c1c2}
\end{equation}%
Let us derive the same from our theorem \ref{thm:MainThm}. Since $%
y_{0}:=\lim_{\varepsilon \rightarrow 0}\,Y_{0}^{\varepsilon }=0$, the
associated control problem is of the form%
\begin{equation*}
d\phi _{t}^{h}=\beta \phi _{t}^{h}dt+\gamma dh_{t},\,\,\,\phi
_{0}^{h}=y_{0}=0.
\end{equation*}%
The Hamiltonian is given by $\mathcal{H}\left( y,p\right) =\beta yp+\frac{1}{%
2}\gamma ^{2}p^{2}$ and the Hamiltonian ODEs to be solved read
\begin{equation*}
\dot{y}_{t}=\beta y_{t}+\gamma ^{2}p_{t},\,\,\,\dot{p}_{t}=-\beta p_{t}.
\end{equation*}%
(with boundary data $y_{0}=0,y_{T}=y$). By variation of constants, $%
y_{T}=y_{t}e^{\beta \left( T-t\right) }+\int_{t}^{T}e^{\beta \left(
T-s\right) }\gamma ^{2}p_{s}ds$ and it easily follows that the Hamiltonian
flow, as function of $\left( y_{T},p_{T}\right) $, is given by%
\begin{eqnarray}
p_{t} &=&p_{T}e^{\beta \left( T-t\right) }  \notag \\
y_{t} &=&y_{T}e^{-\beta \left( T-t\right) }-e^{-\beta \left( T-t\right)
}\int_{t}^{T}e^{\beta \left( T-s\right) }\gamma ^{2}p_{s}ds  \notag \\
&=&y_{T}e^{-\beta \left( T-t\right) }-\gamma ^{2}p_{T}e^{-\beta \left(
T-t\right) }\int_{t}^{T}e^{2\beta \left( T-s\right) }ds
\label{OU1dexample_HamFlow}
\end{eqnarray}%
Taking into account the boundary data $y_{t}|_{t=0}=0$ and $y_{T}=y,$ we find%
\begin{equation*}
y=\gamma ^{2}p_{T}\int_{0}^{T}e^{2\beta \left( T-s\right) }ds\implies p_{T}=%
\frac{y}{\sigma ^{2}}
\end{equation*}%
where $\sigma $ was defined in (\ref{1DOUmu}). According to (\ref{Formula_h0}%
), the (only candidate for a) minimizing control $h_{0}$ is then given via $%
\dot{h}_{0}\left( t\right) =\gamma p_{t}=\gamma p_{T}e^{\beta \left(
T-t\right) }$ so that%
\begin{equation*}
\Lambda \left( y\right) =\frac{1}{2}\int_{0}^{T}\left\vert \dot{h}_{0}\left(
t\right) \right\vert ^{2}dt=\frac{1}{2}p_{T}^{2}\gamma
^{2}\int_{0}^{T}e^{2\beta \left( T-t\right) }dt=\frac{1}{2}p_{T}^{2}\sigma
^{2}=\frac{1}{2}\frac{y^{2}}{\sigma ^{2}}
\end{equation*}%
in agreement with $c_{1}$ as given in (\ref{OUexample_c1c2}). To compute $%
c_{2}$ we specialize (\ref{eq:SDEYHat}) to our situation and the resulting
ODE\ reads%
\begin{equation*}
d\hat{Y}_{t}=\beta \hat{Y}_{t}dt+\alpha dt,\,\,\,\,\hat{Y}_{0}=\hat{y}_{0}.
\end{equation*}%
One readily computes $\hat{Y}_{T}=\hat{y}_{0}e^{\beta T}+\alpha
\int_{0}^{T}e^{\beta \left( T-t\right) }dt$, which equals precisely $\mu $
as defined (\ref{1DOUmu}). Noting that $\Lambda ^{\prime }\left( y\right)
=y/\sigma ^{2}$ we find indeed $\hat{Y}_{T}\Lambda ^{\prime }\left( y\right)
=\mu y/\sigma ^{2}=c_{2}$ in agreement with (\ref{OUexample_c1c2}).

Finally, a word concerning the non-degeneracy condition \textrm{(ND)}, upon
which a justified application of theorem \ref{thm:MainThm} relies. Clearly,
as we have seen, there is only one minimizer. Invertibility of the
deterministic Malliavin covariance matrix is trivially guaranteed due to
ellipticity (here: $\gamma >0$). Finally, the non-focality condition (which
here reduces to the a non-conjugacy condition) requires $y_{0}=\pi \mathrm{H}%
_{0\leftarrow T}\left( y_{T},p_{T}\right) $ to be non-degenerate as function
of $p_{T}$. But this follows from (\ref{OU1dexample_HamFlow})$|_{t=0}$;
indeed%
\begin{equation*}
\frac{\partial y_{0}}{\partial p_{T}}=-\gamma ^{2}e^{-\beta
T}\int_{0}^{T}e^{2\beta \left( T-s\right) }ds\neq 0.
\end{equation*}

\subsection{Langevin dynamics, tail behaviour}

We consider a classical hypoelliptic situation, with Langevin dynamics given
by
\begin{align*}
dY& =Zdt,\quad Y_{0}=\hat{y}_{0}, \\
dZ& =dW_{t},\quad Z_{0}=\hat{z}_{0}.
\end{align*}%
Of course, $Y_{T}$ is Gaussian with mean $\mu =\hat{y}_{0}+\hat{z}_{0}T$ and
variance%
\begin{equation*}
\sigma ^{2}:=\mathbb{V}\left[ Y_{T}\right] =\mathbb{E}\left[
\int_{0}^{T}\int_{0}^{T}W_{s}W_{t}dsdt\right] =2\left[ \int_{0<s<t<T}sdsdt%
\right] =\int_{0}^{T}t^{2}dt=T^{3}/3.
\end{equation*}%
We are \textit{not} looking here at the short time behaviour of $Y_{t}$ as $%
t\downarrow 0$: Indeed, condition (\ref{H1cond}) is not satisfied here and
indeed the $\log $ density of $Y_{t}$ is proportional to $1/\sigma ^{2}=$ $%
O\left( t^{-3}\right) $ as $t\downarrow 0$ which is not at all the behaviour
described in corollary (\ref{CorShortTime}). Instead, we fix $T>0$ and note
that the density of $Y_{T}$ is of the form%
\begin{equation}
\frac{1}{\sqrt{2\pi \sigma ^{2}}}e^{-\frac{\left( y-\mu \right) ^{2}}{%
2\sigma ^{2}}}\sim \text{(const)}e^{-\frac{3}{2T^{3}}\left( y^{2}-2\mu
y\right) }\equiv \text{(const)}e^{-c_{1}y^{2}+c_{2}y}\text{ as }y\uparrow
\infty .  \label{2DGaussHypoExplicit}
\end{equation}%
We now show how to derive this from theorem \ref{thm:MainThm}.

\textbf{Scaling:} Set $Y^{\varepsilon }:=\varepsilon Y$ and similarly for $Z$%
. Then%
\begin{align*}
dY_{t}^{\varepsilon }& =Z_{t}^{\varepsilon }dt,\quad Y_{0}^{\varepsilon
}=\varepsilon \hat{y}_{0}. \\
dZ_{t}^{\varepsilon }& =\varepsilon dW_{t},\quad Z_{0}^{\varepsilon
}=\varepsilon \hat{z}_{0}.
\end{align*}%
(We also set $y_{0}=\lim_{\varepsilon \rightarrow 0}Y_{0}^{\varepsilon }=0$
and similarly $z_{0}=\lim_{\varepsilon \rightarrow 0}Z_{0}^{\varepsilon }=0$%
.) The density expansion of $Y_{T}$, as the space variable $y$ tends to $%
+\infty $, is readily obtained from the density expansion of $%
Y_{T}^{\varepsilon }$, at unit in space, as $\varepsilon =1/y$ tends to
zero. It remains to \textbf{check the assumptions} for theorem \ref%
{thm:MainThm}. With $\sigma _{0}=z\partial _{y}$ and $\sigma _{1}=\partial
_{z}$ we have $\left[ \sigma _{0},\sigma _{1}\right] =\partial _{y}$ which
not only implies the weak H\"{o}rmander's condition \ref{H} but a stronger
"Bismut $\mathrm{H2}$ type" condition which implies \cite[Thm 1.10]{Bismut}
invertibility of $C_{T}^{h,x_{0}}$ for all $h\neq 0$. We are interested in
paths going from the origin in $\mathbb{R}^{2}$ to $N=\left( a,\cdot \right)
$ with $a=1$ and it is easy to see that this is possible upon replacing $W$
by a suitable Cameron-Martin path; in other words,
\begin{equation*}
\mathcal{K}_{a}\neq \varnothing .
\end{equation*}%
(Cf. \cite{Ju} for an abstract criterion that applies in this example).
Since $h\equiv 0$ will never stir us from $\left( 0,0\right) $ to $N$ we
only need to check that $\left( 0,0\right) \times N$ satisfies condition
\textrm{(ND)}. To this end, we note that the Hamiltonian in the present
setting is%
\begin{equation*}
\mathcal{H}\left( \left(
\begin{array}{c}
y \\
z%
\end{array}%
\right) ;\left( p,q\right) \right) =pz+\frac{1}{2}q^{2};
\end{equation*}%
the Hamiltonian ODEs $\dot{y}=z,\,\dot{z}=q;\,\dot{p}=0,\,\dot{q}=-p$ with
(time $T$) terminal data are immediately solved and yield the Hamiltonian
(backward) flow
\begin{eqnarray*}
&&\mathrm{H}_{t\leftarrow T}\left( \left(
\begin{array}{c}
y_{T} \\
z_{T}%
\end{array}%
\right) ;\left( p_{T},\,q_{T}\right) \right) \\
&=&\left( \left(
\begin{array}{c}
y_{T}-z_{T}\left( T-t\right) +q_{T}\left( T-t\right) ^{2}/2+p_{T}\left(
T-t\right) ^{3}/6 \\
z_{T}-q_{T}\left( T-t\right) -p_{T}\left( T-t\right) ^{2}/2%
\end{array}%
\right) ;\left( p_{T},\,q_{T}+p_{T}\left( T-t\right) \right) \right) .
\end{eqnarray*}%
For later reference let us also note%
\begin{equation}
\pi \mathrm{H}_{0\leftarrow T}\left( \left(
\begin{array}{c}
y \\
z%
\end{array}%
\right) ;\left( p,\,q\right) \right) =\left(
\begin{array}{c}
y-zT+qT^{2}/2+pT^{3}/6 \\
z-qT-pT^{2}/2%
\end{array}%
\right) .  \label{LangevinEx_forNF}
\end{equation}%
We solve the Hamiltonian ODEs as boundary value problem (with $%
y_{0}=z_{0}=0;\,y_{T}=a=1$ and $q_{T}=0$). With the explicit form of $\pi
\mathrm{H}_{0\leftarrow T}$, the matching (time $T$) terminal data for
Hamiltonian (backward) flow is immediately computed;
\begin{equation*}
\left.
\begin{array}{c}
0=1-z_{T}T+p_{T}T^{3}/6 \\
0=z_{T}-p_{T}T^{2}/2%
\end{array}%
\right\} \implies z_{T}=3/\left( 2T\right) ,\,p_{T}=3/T^{3}\,
\end{equation*}%
and so the (time $T$) terminal data is found to be $\left( \ast \right)
:=\left( \left(
\begin{array}{c}
1 \\
3/\left( 2T\right)%
\end{array}%
\right) ;\left( 3/T^{3},0\right) \right) $. In particular,%
\begin{equation*}
\mathrm{H}_{t\leftarrow T}|_{\left( \ast \right) }=\left( \left(
\begin{array}{c}
1-\frac{3}{2T}\left( T-t\right) +\frac{1}{2T^{3}}\left( T-t\right) ^{3} \\
z_{T}-\frac{3}{2T^{3}}\left( T-t\right) ^{2}%
\end{array}%
\right) ;\left( \frac{3}{T^{3}},\,\frac{3}{T^{3}}\left( T-t\right) \right)
\right) .
\end{equation*}%
With a look at (\ref{LangevinEx_forNF}), non-focality now follows from%
\begin{equation*}
\det \left( \partial _{z}\pi \mathrm{H}_{0\leftarrow T}|_{\left( \ast
\right) }\,\,\,|\,\,\,\partial _{p}\pi \mathrm{H}_{0\leftarrow T}|_{\left(
\ast \right) }\right) =\det \left( \left(
\begin{array}{c}
-T \\
1%
\end{array}%
\right. \,\left\vert
\begin{array}{c}
T^{3}/6 \\
-T^{2}/2%
\end{array}%
\right. \right) =T^{3}\left( \frac{1}{2}-\frac{1}{6}\right) \neq 0.
\end{equation*}%
Writing $\mathrm{H}_{t\leftarrow T}|_{\left( \ast \right) }=\mathrm{H}%
_{t\leftarrow 0}\left( y_{0},z_{0};p_{0},q_{0}\right) =:\left(
y_{t},z_{t};p_{t},q_{t}\right) ,$ the minimizing control $\mathrm{h}_{0}=%
\mathrm{h}_{0}\left( t\right) $ is then found following the recipe given in
remark \ref{HowToComputeH0}. Since $\sigma _{1}$ is the $z$-coordinate
vector field,
\begin{equation*}
\mathrm{\dot{h}}_{0}\left( t\right) =\left\langle \sigma _{1}\left(
\begin{array}{c}
y_{t} \\
z_{t}%
\end{array}%
\right) ,\left( p_{t},\,q_{t}\right) \right\rangle =q_{t}=\frac{3}{T^{3}}%
\left( T-t\right)
\end{equation*}%
and so
\begin{equation*}
c_{1}:=\Lambda \left( 1\right) =\frac{1}{2}\int_{0}^{T}\left\vert \mathrm{%
\dot{h}}_{0}\left( t\right) \right\vert ^{2}dt=\frac{3}{2T^{3}}
\end{equation*}%
in agreement with (\ref{2DGaussHypoExplicit}). Scaling actually implies $%
\Lambda ^{\prime }\left( 1\right) =2\Lambda \left( 1\right) $. For the
second order constant $c_{2}$, we need to compute $\hat{Y}_{T}$ where%
\begin{equation*}
d\hat{Y}_{t}=\hat{Z}_{t}dt,\,\,\,\,\hat{Y}_{0}=\hat{y}_{0},\,\,\,d\hat{Z}%
_{t}=0,\,\,\,\,\hat{Z}_{0}=\hat{z}_{0}.
\end{equation*}%
This leads immediately to $\hat{Y}_{T}=\hat{y}_{0}+\hat{z}_{0}T=:\mu $ and
then $c_{2}=\Lambda ^{\prime }\left( 1\right) \hat{Y}_{T}=2\mu c_{1},\,$%
again in agreement with the Gaussian computation.

\subsection{An elliptic example with flat metric and degeneracy\label{EEwD}}

Consider the small noise problem for the stochastic differential equation%
\begin{eqnarray*}
dY^{\varepsilon } &=&\varepsilon dW^{1}+\theta Z^{\varepsilon }\varepsilon
dW^{2},\,\,Y_{0}^{\varepsilon }=0; \\
dZ^{\varepsilon } &=&\varepsilon dW^{2},\,\,Z_{0}^{\varepsilon }=0;
\end{eqnarray*}%
where $\theta \in \left[ 0,1\right] $, say. Note that it could be
immediately rephrased as short-time problem $\left( T=1,t=\varepsilon
^{2}\right) $. We are in an elliptic (Riemannian) setting. In fact, the induced metric on $%
\mathbb{R}^{2}$ is flat i.e. has  zero-curvature and hence empty
cut-locus. Clearly, $Y_{T}^{\varepsilon }$ admits a density, say $%
f^{\varepsilon }\left( y\right) $ at time$~T=1$. Considering the point $y=1$%
, for instance, it is not hard to see that
\begin{equation*}
\varepsilon ^{2}\log f^{\varepsilon }\left( 1\right) \sim -\frac{1}{2}\text{
as }\varepsilon \downarrow 0.
\end{equation*}%
At least when $\theta =0$ it is obvious from $Y_{T}^{\varepsilon }$ $\sim
N\left( 0,\varepsilon ^{2}T\right) $ that one has the expansion%
\begin{equation*}
f^{\varepsilon }\left( 1\right) =\varepsilon ^{-1}e^{-\frac{1}{2\varepsilon
^{2}}}\left( c_{0}+O\left( \varepsilon \right) \right)
\end{equation*}%
for some (easy to compute) $c_{0}>0.$ Interestingly, the general situation
is much more involved. Exploiting the fact that $Y_{T}^{\varepsilon }$ can
be written as the independent sum of a Gaussian and a (non-centered)
Chi-square random-variable, $f^{\varepsilon }\left( y\right) $ is given by a
convolution integral and a direct (tedious) analysis shows that%
\begin{equation}
f^{\varepsilon }\left( 1\right) =\left\{
\begin{array}{c}
\varepsilon ^{-1}e^{-\frac{1}{2\varepsilon ^{2}}}\left( c_{0}+O\left(
\varepsilon \right) \right) \text{ \ \ \ when }\theta \in \lbrack 0,1) \\
\varepsilon ^{-3/2}e^{-\frac{1}{2\varepsilon ^{2}}}\left( c_{0}+O\left(
\varepsilon \right) \right) \text{ \ \ \ when }\theta =1%
\end{array}%
\right. .  \label{ChiSquExample}
\end{equation}%
While the energy is equal to $1/2$, no matter the value $\theta \in \left[
0,1\right] $, we see the appearance of an atypical algebraic factor $%
\varepsilon ^{-3/2}$ in the case $\theta =1$.

\bigskip

With a view towards applying our theorem \ref{thm:MainThm}: we have vector
fields $\sigma _{1},\sigma _{2}$ of the form $\partial _{y},$ $\theta
z\partial _{y}+\partial _{z}$. One checks without difficulty that $\mathrm{h}%
_{0}\left( t\right) =\left( t,0\right) $ is the (unique) element in $%
\mathcal{K}_{a}^{\min }$, for any $\theta =\left[ 0,1\right] $. In
particular, the "most-likely" arrival point is $\left( 1,0\right) \in \left(
1,\cdot \right) $. (Minimizers and energy start to look different when $%
\theta >1$ which is why we have focused on $\theta \in \left[ 0,1\right] .$)
In the case $\theta =1$, the explicit "backward" and projected Hamiltonian
flow is
\begin{eqnarray*}
&&\pi \mathrm{H}_{0\leftarrow T}\left( \left(
\begin{array}{c}
y_{T} \\
z_{T}%
\end{array}%
\right) ,\left( p_{T},q_{T}\right) \right) \\
&=&\left(
\begin{array}{c}
y_{T}+\frac{1}{2}\left( p_{T}z_{T}T+q_{T}T-z_{T}\right) ^{2}-\left( p_{T}T+%
\frac{1}{2}z_{T}^{2}\right) \\
z_{T}-q_{T}T-p_{T}z_{T}T%
\end{array}%
\right) .
\end{eqnarray*}%
From this expression, it is then easy to check that $\left( 0,0\right) $ is
focal for $\left( 1,\cdot \right) $. (Proposition \ref{BA1_18} then implies
that the Hessian of the energy at $\mathrm{h}_{0}$ is degenerate. In fact, a
simple computation shows that in this example the null-space of $I^{\prime
\prime }\left( \mathrm{h}_{0}\right) $ is given by $\mathcal{N}\left(
\mathrm{h}_{0}\right) =\mathrm{span}\left\{ \mathrm{k}\right\} $ where $%
\mathrm{k}\in \mathcal{T}_{\mathrm{h}_{0}}\mathcal{K}_{1}\backslash \left\{
0\right\} ,\,$ $\mathrm{k}:\left[ 0,T\right] \rightarrow \mathbb{R}^{2}$
given by $\mathrm{k}\left( t\right) =\left( 0,t\right) $. It follows that
one must not apply theorem \ref{thm:MainThm} here, and indeed, the
prediction of the theorem (algebraic factor $\varepsilon ^{-1}$) would be
false in the case $\theta =1$, as we know from (\ref{ChiSquExample}). On the
other hand, one checks without trouble that for $\theta <1$ the situation is
non-focal, all our assumptions are then met, and so theorem \ref{thm:MainThm}
yields the correct expansion, in agreement with (\ref{ChiSquExample}).

Let us insist that in this example, when $\theta =1$, the degeneracy is precisely due
to focality, whereas the corresponding point-point problem (after all, there is a unique optimal
path from the origin to $(1,0) \in (1,\cdot)$) is non-degenerate. 

\subsection{Brownian motion on the Heisenberg group}\label{LevyAreaSection}

Following a similar discussion by Takanobu--Watanabe \cite{TW}, we consider

\begin{equation*}
\sigma _{1}\left(
\begin{array}{c}
x \\
y \\
z%
\end{array}%
\right) =\left(
\begin{array}{c}
1 \\
0 \\
-y/2%
\end{array}%
\right) ,\,\sigma _{2}\left(
\begin{array}{c}
x \\
y \\
z%
\end{array}%
\right) =\left(
\begin{array}{c}
0 \\
1 \\
x/2%
\end{array}%
\right) .
\end{equation*}%
The solution $\phi \left( \mathrm{h}\right) \equiv \left( x_{\cdot
},y_{\cdot },z_{\cdot }\right) $ to the corresponding controlled ordinary
differential equation in the sense of (\ref{dphih}), with drift $\sigma
_{0}\equiv 0$, has a simple geometric interpretation. Write $\mathrm{h}%
=\left( h^{1},h^{2}\right) $ and assume for simplicity that we start at the
origin, $x_{0}=y_{0}=z_{0}=0$. Then $\left( x_{\cdot },y_{\cdot }\right)
\equiv \left( h^{1},h^{2}\right) $ and $z_{t}$ is the (signed)\ area between
the curve $\left( x_{s},y_{s}:0\leq s\leq t\right) $ and the chord from $%
\left( x_{t},y_{t}\right) $ to $\left( x_{0},y_{0}\right) =\left( 0,0\right)
$ where multiplicity and orientation are taken into account. (When one
starts away from the origin, the interpretation just given holds for $%
t\mapsto \left( x_{0,t},y_{0,t},z_{0,t}\right) $ where%
\begin{equation*}
\left(
\begin{array}{c}
-x_{0} \\
-y_{0} \\
-z_{0}%
\end{array}%
\right) \ast \left(
\begin{array}{c}
x_{t} \\
y_{t} \\
z_{t}%
\end{array}%
\right) =\left(
\begin{array}{c}
x_{0,t} \\
y_{0,t} \\
z_{0,t}%
\end{array}%
\right) ;
\end{equation*}%
where $\left( x,y,z\right) \ast \left( x^{\prime },y^{\prime },z^{\prime
}\right) =\left( x+x^{\prime },y+y^{\prime },z+z^{\prime }+\frac{1}{2}\left(
xy^{\prime }-yx^{\prime }\right) \right) $ gives $\mathbb{R}^{3}$ the
so-called $3$-dimensional Heisenberg group structure. We shall consider unit time horizon, $T=1$, so that $%
\mathrm{h}:\left[ 0,1\right] \rightarrow \mathbb{R}^{2}$. It follows that $%
\left\Vert \mathrm{h}\right\Vert _{H}$ is precisely the (Euclidean) length
of the planar path $\left( x_{t},y_{t}:0\leq t\leq 1\right) $.

The corresponding diffusion process, \textit{Brownian motion on the }$3$\textit{-dimensional
Heisenberg group}, is given by
\begin{equation*}
d\mathrm{X}_{t}=\sigma _{1}\left( \mathrm{X}_{t}\right) dW_{t}^{1}+\sigma
_{2}\left( \mathrm{X}_{t}\right) dW_{t}^{2},\,\,\,\mathrm{X}_{0}=\mathrm{x}%
_{0}\in \mathbb{R}^{3}.
\end{equation*}%
It can also be viewed as the \textit{Brownian rough path} over planar Brownian motion $\left( W^{1},W^{2}\right) $,
see e.g. \cite{FV} and the references therein; the third component is precisely {\it L\'evy's stochastic area}. Set $%
\mathrm{X}^{\varepsilon }:=\delta _{\varepsilon }\mathrm{X}$ where $\delta
_{\varepsilon }\left( x,y,z\right) =\left( \varepsilon x,\varepsilon
y,\varepsilon ^{2}z\right) $ is the dilation operator on the Heisenberg
group. We now consider the small noise problem%
\begin{equation} \label{EBMeps}
d\mathrm{X}_{t}^{\varepsilon }=\sigma _{1}\left( \mathrm{X}_{t}^{\varepsilon
}\right) \varepsilon dW_{t}^{1}+\sigma _{2}\left( X_{t}^{\varepsilon
}\right) \varepsilon dW_{t}^{2}.
\end{equation}%
Since the Hamiltonian flow is analytically tractable \cite{Ga} there is hope
for quite explicit computations.

\subsubsection{Takanobu--Watanabe expansions and focality}
 
We now use our methods\footnote{We note that the localization estimate (\ref{Assloc})
is readily justifed, e.g. by using the Fernique type result \cite{FO}, applicable to
Brownian motion on the Heisenberg group.}
 to recover all "non-degenerate" marginal density 
expansions \cite{TW} based on (\ref{EBMeps}), at time $T=1$ and started at 
$$\mathrm{X}_{0}^{\varepsilon}=0.$$ 
In the notation of that paper, section 7,  we cover their cases (I)$_{1},$(I)$_{2},$(III)$_{1},$(III)$_{2},$(III)%
$_{3}$. The main difference, comparing the approach \cite{TW} with ours, is
that our criterion \textrm{(ND)} bypasses the involved analysis, carried out
by hand in \cite{TW}, of the infinite-dimensional Hessian of the energy at
the minimizer. On the other hand, our approach (presently) does not deal
with degenerate minima, and we do not cover their cases (I)$_{3}$, (II),
(III)$_{4}$; all of which are, of course, ruled out by violating condition
\textrm{(ND)}. The most interesting situation perhaps is the point-line case (III) in which the
degenerate subcase (III)$_4$ is precisely due to focality whereas the corresponding point-point problem
is non-degenerate; this is similar in spirit to the example given in section \ref{EEwD}. 

In order to compute marginal density expansions of $%
\mathrm{X}_{T}^{\varepsilon }$ with $T=1$ and $%
\mathrm{X}_{0}^{\varepsilon }=0$ $\forall \varepsilon >0$ we first note that the
 Hamiltonian takes the form
\begin{equation*}
\mathcal{H}\left( x,y,z;p,q,r\right) =\frac{1}{2}\left( p-\frac{ry}{2}%
\right) ^{2}+\frac{1}{2}\left( q+\frac{rx}{2}\right) ^{2}
\end{equation*}%
and the Hamilton ODEs are%
\begin{equation*}
\left(
\begin{array}{c}
\dot{x} \\
\dot{y} \\
\dot{z}%
\end{array}%
\right) =\left(
\begin{array}{c}
p-\frac{ry}{2} \\
q+\frac{rx}{2} \\
r\left( y^{2}+x^{2}\right) /4+qx/2-py/2%
\end{array}%
\right) ,\,\,\,\left(
\begin{array}{c}
\dot{p} \\
\dot{q} \\
\dot{r}%
\end{array}%
\right) =\left(
\begin{array}{c}
-\frac{r}{2}\left( q+\frac{rx}{2}\right) \\
\frac{r}{2}\left( p-\frac{ry}{2}\right) \\
0%
\end{array}%
\right) .
\end{equation*}%
We compute the Hamiltonian flow. Noting that $r_{t}\equiv r$ constant in
time, we see $\dot{p}$ (resp. $\dot{q}$) is $-r/2$ (resp. $r/2$) times $\dot{%
y}~$\ (resp $\dot{x})$ so that%
\begin{eqnarray}
p_{t} &=&-\frac{r}{2}\left( y_{t}-y_{0}\right) +p_{0},\,  \label{LA_pq} \\
\,q_{r} &=&\frac{r}{2}\left( x_{t}-x_{0}\right) +q_{0}.  \notag
\end{eqnarray}%
It follows that $\left( p,q\right) $ can be expressed "affine linearly" in
terms of $\left( x,y\right) $ and also that $\left( x,y\right) $ is the
solution of a 2-dimensional inhomogenous, linear ODE and one immediately
finds, if $r\neq 0$,%
\begin{eqnarray}
x_{t} &=&\left( \frac{q_{0}}{r}+\frac{x_{0}}{2}\right) \cos \left( rt\right)
+\left( \frac{p_{0}}{r}-\frac{y_{0}}{2}\right) \sin \left( rt\right) -\left(
\frac{q_{0}}{r}-\frac{x_{0}}{2}\right)  \label{LA_xy} \\
y_{t} &=&\left( \frac{q_{0}}{r}+\frac{x_{0}}{2}\right) \sin \left( rt\right)
-\left( \frac{p_{0}}{r}-\frac{y_{0}}{2}\right) \cos \left( rt\right) +\left(
\frac{p_{0}}{r}+\frac{y_{0}}{2}\right) .  \notag
\end{eqnarray}%
Note that $x_{t}+iy_{t}=\left\{ \left( \frac{q_{0}}{r}+\frac{x_{0}}{2}%
\right) -i\left( \frac{p_{0}}{r}-\frac{y_{0}}{2}\right) \right\} e^{irt}$
which makes it plain that $\left( x_{t},y_{t}:0\leq t\leq 1\right) $ form
arcs of circles. Theses circles have radius
\begin{equation}
\rho :=\sqrt{\left( \frac{q_{0}}{r}+\frac{x_{0}}{2}\right) ^{2}+\left( \frac{%
p_{0}}{r}-\frac{y_{0}}{2}\right) ^{2}}  \label{LA_rho}
\end{equation}%
i.e. $\sim 1/r$ as $r\downarrow 0$ (at least when $p_{0}^{2}+q_{0}^{2}>0$).
The limiting case $r=0$ then should correspond to straight lines; and indeed
the Hamiltonian ODEs simplify to $\dot{x}=p,\dot{p}=0$ (similarly for $y,q$)
and so, if $r=0$,%
\begin{equation}
x_{t}=x_{0}+tp_{0},\,\,\,y_{t}=y_{0}+tq_{0}\text{.}  \label{LA_straightlines}
\end{equation}%
(For later reference, let us state explicitely that the Hamiltonian flow
projected to $\left( x_{\cdot },y_{\cdot }\right) $-components is a straight
line if and only if $r=0$.) It remains to determine $z=\left( z_{t}\right) $%
. When $r=0$, this is trivial computation left to the reader. (Actually, if $%
x_{0}=y_{0}=z_{0}=0$ which will always be the case later on, $z_{t}\equiv 0$%
.) Assume now that $r\neq 0$. From the Hamiltonian ODEs we see that $\dot{z}$
is independent of $z$ and so a simple integration over $\left[ 0,t\right] $
yields%
\begin{eqnarray}
z_{t} &=&z_{0}+\frac{1}{8r^{2}}[rt\left( \left( 2q_{0}+rx_{0}\right)
^{2}+\left( -2p_{0}+ry_{0}\right) ^{2}\right)  \label{LA_z} \\
&&-4r\left( p_{0}x_{0}+q_{0}y_{0}\right) \left( -1+\cos \left( rt\right)
\right) +\left( -4p_{0}^{2}-4q_{0}^{2}+r^{2}\left(
x_{0}^{2}+y_{0}^{2}\right) \right) \sin \left( rt\right) ].  \notag
\end{eqnarray}%
For later references we note that (\ref{LA_z})$%
|_{x_{0}=y_{0}=z_{0}=0,t=1,z_{1}=z}$ specializes to%
\begin{equation}
z=\frac{p_{0}^{2}+q_{0}^{2}}{2}\frac{r-\sin r}{r^{2}}.
\label{LA_z_xinit0sub_and_unit_t}
\end{equation}

With (\ref{LA_xy}),(\ref{LA_z}),(\ref{LA_pq}) and $r_{t}\equiv r=r_{0}$ \ we
are now in possession of the explicit solution to the Hamilonian flow $%
\mathrm{H}_{0\rightarrow t}$ as function of the initial data $\left(
x_{0},y_{0},z_{0};p_{0},q_{0},r_{0}\right) $. When $x_{0}=y_{0}=z_{0}=0$,
the projected Hamiltonian flow equals the control $\mathrm{h}\mathbf{=}%
\left( h^{1},h^{2}\right) \equiv \left( x,y\right) $ and its area,%
\begin{equation*}
\pi \mathrm{H}_{0\rightarrow \cdot }\left( 0;p_{0},q_{0},r_{0}\right) \equiv
\left( h^{1},h^{2},\frac{1}{2}\left( \int_{0}^{\cdot
}h^{1}dh^{2}-h^{2}dh^{1}\right) \right) .
\end{equation*}%
In that case, cf. (\ref{EnergyFormula}) with $\sigma _{0}=0$, we have the
simple formula%
\begin{eqnarray} \label{simpleEnergyFormulaEBM}
\frac{1}{2}\left\Vert \mathrm{h}\right\Vert _{H}^{2} &=&\frac{1}{2}\left(
p_{0}^{2}+q_{0}^{2}\right)  \label{LA_energyformula} \\
&=&\frac{1}{2}r^{2}\left[ \left( \frac{p_{0}}{r}\right) ^{2}+\left( \frac{%
q_{0}}{r}\right) ^{2}\right] =\frac{1}{2}r^{2}\rho ^{2}.  \notag
\end{eqnarray}%
In other words, $\left\Vert \mathrm{h}\right\Vert _{H}=\left\vert
r\right\vert \rho $ which is in perfect agreement with $\left\Vert \mathrm{h}%
\right\Vert _{H}$ being the Euclidean length of $\mathrm{h:}\left[ 0,1\right]
\rightarrow \mathbb{R}^{2}$; after all, $\mathrm{h}$ is the arc of a circle
with radius $\rho $ and angle $r$.

We now compute marginal density expansions; to facilitate
comparison with \cite{TW} we use the same case distinctions.\footnote{%
Of course, cases I-III below are not all possible coordinate projections of $%
\mathrm{X}_{1}^{\varepsilon }$ but the remaining cases are either Gaussian, $%
X_{1}^{\varepsilon },Y_{1}^{\varepsilon },\left( X_{1}^{\varepsilon
},Y_{1}^{\varepsilon }\right) $, in which case explicit densities are
available, or reduce to one of the above by symmetry, i.e. $\left(
Y_{1}^{\varepsilon },Z_{1}^{\varepsilon }\right) $ by switching the r\^{o}%
les of $X$ and $Y$.}

\textbf{Case (I):} expand the density of $\mathrm{X}_{1}^{\varepsilon
}=\left( X_{1}^{\varepsilon },Y_{1}^{\varepsilon },Z_{1}^{\varepsilon
}\right) $; here $d=l=3$ and we are dealing with a point-point problem:
given $x,y,z\in \mathbb{R}^{3}$ we look for a curve $\left(
x_{t},y_{t}:0\leq t\leq 1\right) $ of minimal length which, after joining $%
\left( x_{1,}y_{1}\right) =\left( x,y\right) $ and $\left(
x_{0},y_{0}\right) =\left( 0,0\right) $ by a straight line, encloses
(signed) area $z$.

\textbf{Case (II):} expand the density of\textit{\ L\'{e}vy's area} $%
Z_{1}^{\varepsilon }$; here $d=3;l=1$ and we are dealing with a point-plane
problem: given $\,z\in \mathbb{R}$, we need to find a curve $\left(
x_{t},y_{t}:0\leq t\leq 1\right) $ of minimal length which, after joining
its endpoint $\left( x_{1,}y_{1}\right) $ with $\left( x_{0},y_{0}\right)
=\left( 0,0\right) $ by a straight line, encloses (signed) area $z$.

\textbf{Case (III): }the density of $\left( X_{1}^{\varepsilon
},Z_{1}^{\varepsilon }\right) $; here $d=3,l=2$ and we are dealing with a
point-to-line problem: given $x,z\in \mathbb{R}^{2}$, we need to find a
curve $\left( x_{t},y_{t}:0\leq t\leq 1\right) $ of minimal length which
starts at $\left( 0,0\right) $ and arrives at the target manifold $\left(
x,\cdot \right) $ at unit time, such that after joining $\left(
x_{1,}y_{1}\right) $ and $\left( x_{0},y_{0}\right) =\left( 0,0\right) $ by
a straight line, it encloses (signed) area $z$.

\bigskip

In all cases we have to solve the Hamiltonian ODEs subjected to the right
boundary conditions and then check our non-degeneracy condition. Let us note
straight away that we need to rule out $x=y=z=0$ in case (I); $z=0$ in case
(II) and $x=y=0$ in Case (III). Indeed, in all these situations the obvious
minimizing control is $\mathrm{h}\equiv 0$ - but then $\det C^{\mathrm{0}}=0$%
, and hence $\mathrm{(ND)}$ is violated\footnote{%
To see the importance of the condition $C^{\mathrm{h}}\neq 0$ consider the
case (I). The generic prediction of theorem \ref{thm:MainThm} is an
expansion with algebraic factor $\varepsilon ^{-3}$. However, when $x=y=z=0$%
, one has known algebraic factor $\varepsilon ^{-4}$. Such "on-diagonal"
behaviour of hypoelliptic heat-kernels is discussed in \cite{BAL91, BAL91b}.}%
. (For the same reason, these situations are disregarded in \cite{TW}). In
all other situations, $\mathrm{h}$ is \textit{not} identically equal to zero
and hence, by Bismut's $\mathrm{H2}$ condition or direct verification, $\det C^{\mathrm{h}}\neq 0$.
In particular, checking our non-degeneracy conditions boils down to check $%
\#\left\{ \text{minimizers}\right\} <\infty $ and then non-focality (if $d=l$
better called non-conjugacy).

\textbf{Case (I.1)} $x^{2}+y^{2}>0,\,z=0$; the unique shortest path between $%
\left( 0,0\right) $ and $\left( x,y\right) $ is a straight-line which
obviously has zero area throughout and hence is compatible with $z=0$. In
particular then $\mathrm{h}\left( t\right) =\left( tx,ty\right) $ is the
(unique) minimizer and from (\ref{LA_straightlines}) we see $p_{0}=x,q_{0}=y$%
. By considering either the length of $\mathrm{h}$ or by recalling (\ref%
{LA_energyformula}), $\left\Vert \mathrm{h}\right\Vert _{H}^{2}=x^{2}+y^{2}$%
. As for the Hamiltonian flow, we must have $r=0$ (for otherwise, $\mathrm{h}
$ would be an arc), hence $p_{\cdot },q_{\cdot }$ are constant (since $\dot{p%
},\dot{q}\propto r$) with constant values $p_{0}$ and $q_{0}$ respectively.
It can be checked below that $\mathrm{(ND)}$ holds (only non-conjugacy
remains to be checked), as a consequence of theorem \ref{thm:MainThm} we
then have%
\begin{equation*}
f^{\varepsilon }\left( x,y,z;T\right) |_{T=1}\sim e^{-\frac{%
x^{2}+y^{2}}{2\varepsilon ^{2}}}\varepsilon ^{-3} c_0
\end{equation*}%
in agreement with the corresponding expansion given in \cite[Sec. 7, case
(I.2)]{TW}.


\textbf{Case (I.2)} $x^{2}+y^{2}>0,z\neq 0$; since $r=0$
always leads to straight lines, and straight lines have zero area, we
necessarily have $r\neq 0$. Imposing terminal conditions $x_{1}=x,y_{1}=y$ in (\ref{LA_xy}) and then
$z_1=z$, cf.  (\ref{LA_z_xinit0sub_and_unit_t}), allows one to see that, 
given $x, y, z$ as specified by Case (I.2) there is a unique $r\in \left( -2\pi ,2\pi \right)
\backslash \{0\}$ for which
\begin{equation*}
\frac{r-\sin r}{8\sin ^{2}\left( \frac{r}{2}\right) }=\frac{z}{x^{2}+y^{2}}.
\end{equation*}
holds\footnote{%
The same formula appears in \cite[Sec. 7, case (I.2)]{TW}; note that $%
r=2\sigma $ in their notation so that $0<\left\vert \sigma \right\vert <\pi $%
. See also \cite{Ga, M}.}.  It remains to see non-degeneracy of%
\begin{equation*}
\frac{\partial \pi \mathrm{H}_{0\leftarrow 1}\left( x,y,z;p,q,r\right) }{%
\partial \left( p,q,r\right) }|_{\ast }
\end{equation*}%
where $\ast $ stands for evaluation at $\mathrm{H}_{0\rightarrow 1}\left(
0,0,0;p_{0},q_{0},r_{0}\right) =\left( x,y,z,p,q,r\right) $ with
\begin{equation}
p=x\frac{r}{2}\cot \frac{r}{2}, \,\, q=y\frac{r}{2}\cot \frac{r}{2}.
\label{LA_CaseI2_nondegPQ}
\end{equation}%
Now $\left( x_{0},y_{0},z_{0}\right) =\pi \mathrm{%
H}_{0\leftarrow 1}\left( x,y,z;p,q,r\right) $ (which of course equals zero upon evaluation $|_\ast$) can be differentiated in
closed form with respect to $p,q,r$. Taking into account (\ref%
{LA_CaseI2_nondegPQ}), the resulting matrix is indeed a function of $r,x,y$
(observe that dependecy of $z$ also drops out because $z$ appears
additively). A tedious computation (for which we used MATHEMATICA) then
shows
\begin{equation*}
\det \frac{\partial \pi \mathrm{H}_{0\leftarrow 1}\left( x,y,z;p,q,r\right)
}{\partial \left( p,q,r\right) }|_{\ast }=\left( x^{2}+y^{2}\right) \frac{%
\frac{r}{2}\cos \frac{r}{2}-\sin \left( \frac{r}{2}\right) }{r^{2}\sin
\left( \frac{r}{2}\right) }.
\end{equation*}%
By assumption $x^{2}+y^{2}>0$ and we since the remaining fraction above as function of $r$, is strictly
negative on $\left( -2\pi ,2\pi \right)$, we obtain the desired non-degeneracy. In other words, $(0,0,0) $
 and $(x,y,z) $ are non-conjugate (along
the unique minimizer). Hence, after computing the energy with the aid of formula (\ref{simpleEnergyFormulaEBM}),
our theorem \ref{thm:MainThm} gives
\begin{equation} \label{TWI2density}
f^{\varepsilon }\left( x,y,z;T\right) |_{T=1}\sim e^{-\frac{%
x^{2}+y^{2}}{2\varepsilon ^{2}}\frac{r^{2}}{4\sin ^{2}\frac{r}{2}}%
}\varepsilon ^{-3} c_0
\end{equation}%
in agreement with the corresponding expansion given in \cite[Sec. 7, case
(I.2)]{TW}.

\textbf{Case (I.3)} $x=y=0,\,z\neq 0$. We are looking for the shortest path $%
\left( x_{\cdot },y_{\cdot }\right) $ which starts and ends (at unit time)
at the origin, subject to enclosed area $z$. Any minimizing path (which
actually must be a full circles, obviously of radius $\sqrt{\left\vert
z\right\vert /\pi }$; and then perimeter $2\sqrt{\left\vert z\right\vert \pi
}$ and energy $2\left\vert z\right\vert \pi $) may be rotated by some angle $%
\theta \in \lbrack 0,2\pi )$ to yield another, and distinct, minimizing
path. In other words, the assumption of finitely many minimizers which
formed part of condition $\mathrm{(ND)}$ is violated. And indeed, \cite{TW}
find (note the algebraic factor $\varepsilon ^{-4}$ in contrast to theorem %
\ref{thm:MainThm} with generic prediction $\varepsilon ^{-l},\,l=3$)
\begin{equation*}
f^{\varepsilon }\left( x,y,z;T\right) |_{T=1}\sim e^{-\frac{%
2\pi \left\vert z\right\vert }{\varepsilon ^{2}}}\varepsilon ^{-4} c_0.
\end{equation*}

\textbf{Case (II) }Given $\,z\neq 0$, we need to find a curve $\left(
x_{t},y_{t}:0\leq t\leq 1\right) $ of minimal length which, after joining
its endpoint $\left( x_{1,}y_{1}\right) $ with $\left( x_{0},y_{0}\right)
=\left( 0,0\right) $ by a straight line, encloses (signed) area $z$. As is
well-known ("Dido's problem", e.g. \cite{M} and the references therein), and
easily verified by solving the Hamiltonian ODEs with boundary data%
\begin{equation*}
x_{0}=y_{0}=z_{0}=0;\text{ }p_{1}=q_{1}=0\text{ and }z_{1}=z,
\end{equation*}%
the solution to this classical isoperimetric problem is a half-circle (hence
the "angle" $\left\vert r\right\vert $ equals $\pi $; the sign of $r$ is the
sign of $z$). Note that a half-circle of given area $\left\vert z\right\vert
$ has radius $\sqrt{2\left\vert z\right\vert /\pi }$, the length of the arc
is then $\sqrt{2\left\vert z\right\vert \pi }$, the energy equal to $%
\left\vert z\right\vert \pi $.)\

Again, any such half-circle may be rotated by some angle $\theta \in \lbrack
0,2\pi )$ to yield another, and distinct, minimizing path. For the same
reason as in case (I.3) above, condition $\mathrm{(ND)}$ is thus violated.
And indeed, \cite{TW} find a density expansion of L\'{e}vy's area $%
Z_{1}^{\varepsilon }$ of the form (note the algebraic factor $\varepsilon
^{-2}$ in contrast to theorem \ref{thm:MainThm} with generic prediction $%
\varepsilon ^{-l},\,l=1$)%
\begin{equation*}
f^{\varepsilon }\left( z;T\right) |_{T=1}\sim \mathrm{(const)}e^{-\frac{%
\left\vert z\right\vert \pi }{\varepsilon ^{2}}}\varepsilon ^{-2}
\end{equation*}

\textbf{Case (III)}. Given $x,z$, we need to find a curve $\left(
x_{t},y_{t}:0\leq t\leq 1\right) $ of minimal length which starts at $\left(
0,0\right) $ and arrives at the target manifold $\left( x,\cdot \right) $ at
unit time, such that after joining $\left( x_{1,}y_{1}\right) $ and $\left(
x_{0},y_{0}\right) =\left( 0,0\right) $ by a straight line, it encloses
(signed) area $z$. This translates to the following boundary data for the
Hamiltonian ODEs,
\begin{equation}
x_{0}=y_{0}=z_{0}=0;\text{ }x_{1}=x,\text{ }q_{1}=0\text{ and }z_{1}=z.
\label{LA_CaseIII_HBC}
\end{equation}%
We start with an informal discussion. To avoid essentially trivial
situations (in which minimizers are straight lines) we assume $z\neq 0$, and
then w.l.o.g. $z>0$. If we ignore momentarily $x_{1}=x$ (so that we are back
in case II), the energy minimizing path is a half-circle with area $z$;
hence of radius $\rho :\rho =\sqrt{2z/\pi }$; note that the diameter is $%
2\rho =\sqrt{8z/\pi }$. Consider case (III.1) in which this quantity is
strictly greater than $\left\vert x\right\vert $. By symmetry, there are two
minimizing paths - both half-circles - ending at $\left( x,\pm y^{\ast
},z\right) $ for some computable $y^{\ast }>0$. As $z$ decreases, eventually
one has equality $\left\vert x\right\vert =\sqrt{8z/\pi }$ and the two
minimizing paths now collapse into one (half-circle) which ends at $\left(
x,0,z\right) $; following \cite{TW} we call this case (III.4). Note that the
"half-circle" condition $\left\vert r\right\vert =\pi $ (and actually $r=\pi
$ here since $z>0$) holds for both (III.1), (III.4). Finally, we follow \cite%
{TW} in calling " $\left\vert x\right\vert <\sqrt{8z/\pi }$" Case (III.2).
In this case, no half-circle\ (with diameter $2\rho =\sqrt{8z/\pi }$) can
possibly be energy-minimizing for the point-line problem for it cannot
possibly satisfy the admissiblity condition $x_{1}=x$. The (unique)
minimizer in this case is a "less curved" arc (with angle $\left\vert
r\right\vert <\pi $ and actually $r\in \left( 0,\pi \right) $ since $z>0$)
which ends at $\left( x,0,z\right) $. We now claim that cases (III.1) and
(III.2)\ are not focal (so that theorem \ref{thm:MainThm} applies) while
case (III.4)\ is focal. Note that the point-point problem from the origin to
the most likely arrival point $\left( x,0,z\right) $ is non-conjugate; i.e.
we are dealing with a genuine focality phenomena here. (For the sake of
completeness we also consider case (III.3) below which deals with straight
lines.)

\textbf{Case (III.1)} Assume $%
\left\vert x\right\vert <$ $\sqrt{8\left\vert z\right\vert /\pi }$ ($%
\implies z\neq 0$); see figure 1. 
\begin{figure}[hb]
  \centering
  \includegraphics[scale=0.7]{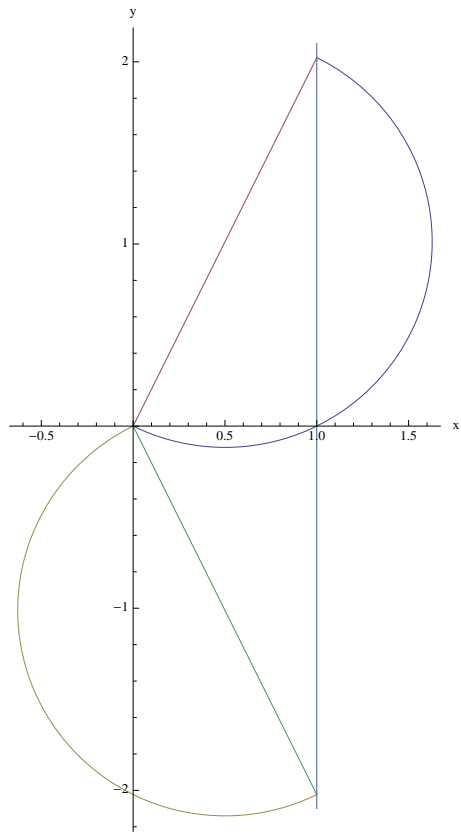}
  \caption[]
   {Case (III.1) is illustrated by drawing both "half-circle" minimizers for
arrival at $\left( x,\cdot ,z\right) $ with $x=1,z=2$.}
\end{figure}

We analyze the Hamiltonian ODEs with boundary data (\ref%
{LA_CaseIII_HBC}). With $x_{0}=y_{0}=0$ and terminal conditions $x_{1}=x$ in
(\ref{LA_xy}) we have%
\begin{eqnarray*}
rx &=&p_{0}\sin r-q_{0}\left( 1-\cos r\right)  \\
&=&p_{0}\sin r-2q_{0}\sin ^{2}\frac{r}{2}.
\end{eqnarray*}%
Recall $q_{1}-q_{0}=\frac{r}{2}\left( x_{1}-x_{0}\right) $, a simple
consequence from the Hamiltonian ODEs. Transversality condition $q_{1}=0$
(and $x_{1}=x,x_{0}=0$) then translates to
$ -2q_{0}=rx$. Plugging this into the previous equation leaves us with
$
rx\left( 1+\cos r\right) =2p_{0}\sin r
$.
On the other hand, $z\neq 0\implies r\neq 0$ (no straight lines!) and we
already pointed out in (\ref{LA_z_xinit0sub_and_unit_t}) that taking into
account $z_{1}=z$ in (\ref{LA_z}), in addtion to $x_{0}=y_{0}=z_{0}=0$,
gives
\begin{equation*}
z=\frac{p_{0}^{2}+q_{0}^{2}}{2}\frac{r-\sin r}{r^{2}}.
\end{equation*}%
Noting that $r-\sin r\neq 0$ when $r\neq 0$, it follows that
\begin{eqnarray*}
\frac{z\sin ^{2}r}{r-\sin r} &=&\frac{4p_{0}^{2}\sin ^{2}r+4q_{0}^{2}\sin
^{2}r}{8r^{2}} \\
&=&x^{2}\frac{\left( 1+\cos r\right) ^{2}+\sin ^{2}r}{8}=x^{2}\frac{1+\cos r%
}{4}.
\end{eqnarray*}%
If $\cos r=-1$ (equivalently: $r=\pi +2\pi \mathbb{Z}$) this equation is
trivially satisfied; indeed both sides are zero since $\sin $ vanishes on $%
\pi +2\pi \mathbb{Z}$. Otherwise, the above equation may be written as%
\begin{equation}
\frac{x^{2}}{z}=\frac{4\sin ^{2}r}{\left( r-\sin r\right) \left( 1+\cos
r\right) }.  \label{LA_CaseIII_ratioxx_z}
\end{equation}%
Focus on $z>0$ in the sequel, the other case being similar. Note that the
right-hand-side above has a removable singularity at $r=\pi $ and takes%
,  as $r\rightarrow \pi $, the value $8/\pi $. In fact,
it is easy to see
 that the graph of the right-hand-side
above as function of $r$ stays strictly below $8/\pi $ as $r\in \left( \pi
,\infty \right) $. The assumption made in the (present) case III.1 is
precisely $x^{2}/z<$ $8/\pi $. In particular then, each solution - there may
be more than one - to (\ref{LA_CaseIII_ratioxx_z}), given $x^{2}/z\in
\lbrack 0,8/\pi )$, will be strictly bigger than $\pi $. We then have found
the following possible values for $r:$%
\begin{equation*}
r\in \left\{ \pm \pi ,\pm 3\pi ,...\right\} \cup \left\{ \text{solutions to (%
\ref{LA_CaseIII_ratioxx_z}), "}>\pi \text{"}\right\} .
\end{equation*}%
We can see that %
those values of $r$ for which $%
\left\vert r\right\vert $ is smallest correspond to the energy minimizing
choice. Hence, in case (III.1), we have $r=\pm \pi $. Accordingly,%
\begin{equation*}
q_{0}=-\mathrm{sgn}\left( z\right) \pi x/2,\,\,\,p_{0}=\pm \sqrt{\frac{\pi }{%
4}\left( 8\left\vert z\right\vert -\pi x^{2}\right) }
\end{equation*}%
and also%
\begin{equation*}
p_{1}=0,\,y_{1}=\pm 2p_{0}/\pi ,r=\pm \pi
\end{equation*}%
which complements our apriori knowledge ($x_{1}=x,q_{1}=0,z_{1}=z$) of the
Hamiltonian ODE solution at unit time. Note that we have \textit{two}
minimizing paths here, with respective arrival points $\left( x,\pm
2p_{0}/\pi ,z\right) \in \left( x,\cdot ,z\right) $. As usual, the energy is%
\begin{equation*}
\frac{1}{2}\left( p_{0}^{2}+q_{0}^{2}\right) =\left\vert z\right\vert \pi.
\end{equation*}%
(The absence of $x$ in the energy is not surprising, since we are
effectively dealing with (two) half-circles, radius (and then length) are
fully determined by the prescribed area $z$.) Note that the energy function
if smooth in a neighbourhood of $z$ since $z\neq 0$. At last, a computation
gives
\begin{equation*}
\frac{\partial \pi \mathrm{H}_{0\leftarrow 1}\left(
x;y_{1},z,p_{1},q_{1},r\right) }{\partial \left( p_{1},y_{1},r\right) }%
=\left(
\begin{array}{ccc}
0 & 0 & -\frac{y_{1}}{2} \\
\frac{2}{r} & 0 & \frac{x}{2} \\
\frac{rx+r\left( x+ry_{1}\right) }{2r^{2}} & -\frac{ry_{1}}{4} & 0%
\end{array}%
\right) ,
\end{equation*}%
the determinant of which equals 
\begin{equation*}
y_{1}^{2}/4=p_{0}^{2}/\pi ^{2}=\frac{1}{4}\left( \frac{8}{\pi }\left\vert
z\right\vert -x^{2}\right) >0
\end{equation*}%
In particular, $\mathrm{x}_{0}=0$ is non-focal for $\left( x,\cdot ,z\right)
$ along either of the two minimizers. As a consequence, theorem \ref{thm:MainThm} gives
\begin{equation*}
f^{\varepsilon }\left( x,z;T\right) |_{T=1}\sim e^{- \left\vert z\right\vert \pi / \varepsilon^2
}\varepsilon ^{-2} c_0
\end{equation*}%
in agreement with the corresponding expansion given in \cite[Sec. 7, case
(III.1)]{TW}.

\textbf{Case (III.2)} Assume $%
\left\vert x\right\vert >\sqrt{8\left\vert z\right\vert /\pi }$ ($\implies
x\neq 0$) and also $z\neq 0$; see figure 2.
\begin{figure}[hb]
  \centering
  \includegraphics[scale=0.7]{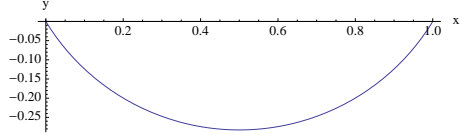} 
  \caption[]
   {Case (III.2) is illustrated by drawing the unique minimizing arc for
arrival at $\left( x,\cdot ,z\right) $ with $x=1,z=1/5$.}
\end{figure}
 (The case $z=0$ is simpler and discussed
separately below). We proceed exactly as above, but now there is a (unique)
solution $r_{0}:=r\in (-\pi ,\pi )\backslash \left\{ 0\right\} $ to (\ref%
{LA_CaseIII_ratioxx_z}), which corresponds to the energy correspond to the
energy minimizing choice. We then find
\begin{equation*}
p_{1}=x\frac{r}{2}\cot \frac{r}{2},\,y_{1}=0,r=r_{0}
\end{equation*}%
which complements our apriori knowledge ($x_{1}=x,q_{1}=0,z_{1}=z$) of the
Hamiltonian ODE solution at unit time. (The results corresponds precisely
to the point-point problem discussed in
case (I.2) with arrival point $\left( x,0,z\right) $.) A computation then
shows that%
\begin{equation*}
\frac{\partial \pi \mathrm{H}_{0\leftarrow 1}\left(
x;y_{1},z,p_{1},q_{1},r\right) }{\partial \left( p_{1},y_{1},r\right) }%
|=\left(
\begin{array}{ccc}
-\frac{\sin r}{r} & \frac{\sin r}{r} & \frac{x\cot \left( \frac{r}{2}\right)
\left( -r+\sin r\right) }{2r} \\
\frac{1-\cos r}{r} & \cos ^{2}\frac{r}{2} & \frac{x\left( r-\sin r\right) }{%
2r} \\
\frac{x}{r}-\frac{1}{2}x\cot \frac{r}{2} & \frac{1}{4}rx\cot \frac{r}{2} &
\frac{x^{2}\left( -2+r\cot \frac{r}{2}\right) \cot \frac{r}{2}}{4r}%
\end{array}%
\right) ,
\end{equation*}%
the determinant of which simplifies to
\begin{equation*}
\frac{x^{2}\left( -2+r\cot \frac{r}{2}\right) \cot \frac{r}{2}}{r}\text{. }
\end{equation*}%
Since $x\neq 0$, it suffices to remark that the remaining factor, as
function of $r$ only, does not take the value zero for $r\in \left( -\pi
,\pi \right) \backslash \left\{ 0\right\} $. In fact, as is easy to see, %
the determinant has a removable
singularity at zero remains strictly negative on the entire open interval $%
\left( -\pi ,\pi \right) $. We thus established non-focality (note however, that the determinante \textit{does
}vanish in the limit $\left\vert r\right\vert \uparrow \pi $; this is the
focal case (III.4) discussed below.) Theorem \ref{thm:MainThm} then gives
\begin{equation*}
f^{\varepsilon }\left( x,z;T\right) |_{T=1}\sim e^{-c_1 / \varepsilon^2} \varepsilon ^{-2} c_0
\end{equation*}%
where $c_1$ is determined exactly as in case (I.2), equation (\ref{TWI2density}), just with $y=y_1=0$.

\textbf{Case (III.3)}\ Assume $\left\vert x\right\vert >\sqrt{8\left\vert
z\right\vert /\pi }=0$ (i.e. $x\neq 0,\,z=0$) One finds without trouble $%
r=0,\,p_{0}=x,q_{0}=0$ and then $p_{1}=x,y_{1}=0$; from transversality of
course $q_{1}=0$. The (unique) minimizing path is then given by $\left(
x_{t},y_{t}\right) =\left( tx,0\right) $, the energy is equal to $\left\vert
x\right\vert $. Non-focality can be checked e.g. by recycling the expression of
case (III.2) in the limit $r=0$.\newline

\textbf{Case (III.4)}  %
%
%
%
%
All computations from either case (III.1) remain valid. We have $y_{1}=0$ (so
that there is a unique minimizer) and the determinant, which was seen to be $%
\propto y_{1}^{2}$ is now equal to zero. By definition, $\mathrm{x}_{0}=0$
is then focal for $\left( x,\cdot ,z\right) $ along the (now: unique)
minimizer. Equivalently, we may approach this from case (III.2), by taking
the limit $\left\vert r\right\vert \uparrow \pi $ (geometrically this
amounts to have more and more curved arcs from $\left( 0,0\right) $ to $%
\left( x,0\right) $ until we arrive at the half-circle solution from).
Either way, being in a focal situation, we cannot apply theorem \ref{thm:MainThm} and indeed in
\cite{TW} a density expansion is given with algebraic factor $\varepsilon
^{-5/2}$, in contrast to the generic prediction $%
\varepsilon ^{-l},\,l=2$ of our theorem.

\subsubsection{Starting point with O($\varepsilon)$-dependence; appearance
of $e^{c_2/{\varepsilon}}$-factor}

Let us briefly illustrate how our methods allow to go beyond the results
of Takanobu--Watanabe, which are - in the non-degenerate case - marginal density expansions
based on (\ref{EBMeps}), started at the origin and run til unit time ($T=1$), of the form
\[
\bar{f}^\varepsilon \left( \mathrm{a} \right)| \sim e^{-c_{1}/\varepsilon
^{2}}\varepsilon ^{-l} c_0 \text{ with }c_{1}=\Lambda \left( \mathrm{a}\right) \text{ and }c_{0}>0.
\]%
For instance, in the case (III) above, we have  $\mathrm{a}=\left( x,z\right) \in \mathbb{R}^{l}$
for $l=2$.  To this end, we again consider marginal density 
expansions based on (\ref{EBMeps}), but now started order $\varepsilon$ away from the origin.
\textit{For simplicity only}, we shall consider the subcase (III.1), the energy
in this case was computed to be $\Lambda \left( \mathrm{a}\right) =\Lambda \left( x,z\right) =\pi |z|$,
and take the starting point 
\[
X_{0}=0,\,Y_{0}=\varepsilon \hat{y}_{0},\,Z_{0}=\varepsilon \hat{z}_{0}.
\]%
According to our general theory, as laid out in
theorem \ref{thm:MainThm}, perturbation of the starting point to first order in $\varepsilon 
$ will lead to appearance of second order exponential terms in the density (at time $T=1$),%
\[
f^\varepsilon\left( \mathrm{a}\right) \sim  e^{-c_{1}/\varepsilon
^{2}}e^{c_{2}/\varepsilon }\varepsilon ^{-l} c_0.
\]%
We now compute $c_2$. With $^{\prime }$ for the derivative with respect to $\mathrm{a}=\left(
x,z\right) $ as usual, we have, assuming $z>0$ w.l.o.g.,%
\[
\Lambda ^{\prime }\left( \mathrm{a}\right) =\left( \partial _{x}\Lambda
\left( x,z\right) ,\partial _{z}\Lambda \left( x,z\right) \right) =\left(
0,\pi \right) .
\]%
On the other hand, we need to compute $\mathrm{\hat{Y}}_{T}=\left( \hat{X}%
_{T},\hat{Z}_{T}\right) $, along (apriori each of the two) minimizing
controls $\mathrm{h}=\left( h^{1},h^{2}\right) $, based on the following
auxilary ODE,  
\begin{eqnarray*}
d\hat{X} &=&0,d\hat{Y}=0,d\hat{Z}=-\frac{1}{2}\hat{Y}dh^{1}+\frac{1}{2}\hat{X%
}dh^{2}, \\
\hat{X}_{0} &=&0,\,\hat{Y}_{0}=\hat{y}_{0},\,\hat{Z}_{0}=\hat{z}_{0}.
\end{eqnarray*}%
Noting that $\mathrm{h}_{0}=\left( 0,0\right) ,\,\mathrm{h}_{T}\in \left(
x,\cdot \right) $, and thanks to $\hat{X}\equiv 0$, the computation is
identical for both controls, $\hat{Z}_{T}=\hat{z}_{0}-\hat{y}_{0}x/2$. And
it follows that $c_{2}=\Lambda ^{\prime }\left( \mathrm{a}\right) \cdot 
\mathrm{\hat{Y}}_{T}=\pi \left( \hat{z}_{0}-\hat{y}_{0}x/2\right) $. We thus
proved

\begin{proposition}
Let $\left( X^{\varepsilon },Y^{\varepsilon },Z^{\varepsilon }\right) $ be $%
\varepsilon $-dilated Brownian motion on the Heisenberg group, started at $%
\left( 0,\varepsilon \hat{y}_{0,}\varepsilon \hat{z}_{0}\right) $. Then $%
\left( X_{1}^{\varepsilon },Z_{1}^{\varepsilon }\right) $ admits a density
which in the case (III.1), say when $z>0$ and $\left\vert x\right\vert <$ $%
\sqrt{8z\pi }$, has an expansion as $\varepsilon \to 0$, for some $c_0 >0$,  of the form%
\begin{equation}
f_{\varepsilon }\left( x,z; T\right) |_{T=1} \sim e^{-\pi z/\varepsilon ^{2}}e^{\pi
\left( \hat{z}_{0}-\hat{y}_{0}x/2\right) /\varepsilon }\varepsilon ^{-2}c_{0}.
\label{ExpXZwithyz0hat}
\end{equation}
\end{proposition}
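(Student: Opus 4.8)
The plan is to read off (\ref{ExpXZwithyz0hat}) directly from Theorem \ref{thm:MainThm}, in the several-minimizers form of Remark \ref{RmkMultipleMin}, once its hypotheses have been checked in the present setting and the constants $c_1,c_2$ identified. First I would put (\ref{EBMeps}) with starting point $\mathrm{X}_0^{\varepsilon }=(0,\varepsilon \hat y_0,\varepsilon \hat z_0)$ into the standard framework: the drift is absent, so $\sigma_0\equiv 0$ and (\ref{bepsTob}), (\ref{bepsC1}) hold trivially, while (\ref{x0eps_ass}) holds with $\mathrm{x}_0=0$ and $\hat{\mathrm{x}}_0=(0,\hat y_0,\hat z_0)$. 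The weak H\"ormander condition (\ref{H}) at the origin is immediate from $[\sigma_1,\sigma_2]=\partial _z$; indeed the strong condition (\ref{H1}) holds at every point, so $\mathcal{K}_{\mathrm a}\neq \varnothing$ for $\mathrm a=(x,z)$. The localization estimate (\ref{Assloc}), needed because the $\sigma_i$ are unbounded, is supplied by the Fernique-type bound cited in Section \ref{LevyAreaSection}, cf.\ remark \ref{rem:loc}.

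Next I would verify condition \textrm{(ND)} for $\{0\}\times N_{\mathrm a}$ in the subcase (III.1) with $z>0$ and $|x|<\sqrt{8z/\pi }$; but essentially all of this was already established in the discussion of Case (III.1). There are exactly two minimizers, the two half-circles arriving at $(x,\pm 2p_0/\pi ,z)$ --- the competing values $r\in \{\pm 3\pi ,\dots \}$ having strictly larger energy --- so $1\leq \#\mathcal{K}_{\mathrm a}^{\min }=2<\infty $; since each minimizer is non-trivial ($\mathrm h\not\equiv 0$), invertibility of the deterministic Malliavin matrix $C(\mathrm h)$ follows from Bismut's \textrm{(H2)} condition (valid here, cf.\ Section \ref{LevyAreaSection}); and non-focality of $0$ along either minimizer is precisely the determinant computation from Case (III.1),
\[
\det \frac{\partial \,\pi \mathrm{H}_{0\leftarrow 1}}{\partial (p_1,y_1,r)}\Big|_{\ast }=\frac{y_1^2}{4}=\frac14\Big(\frac8\pi z-x^2\Big)>0 .
\]
Because $\#\mathcal{K}_{\mathrm a}^{\min }>1$, Theorem \ref{thm:MainThm} additionally demands that $\Lambda $ be smooth near $\mathrm a$; this holds because both half-circles have energy $\pi |z|$, independently of $x$, so $\Lambda (x,z)=\pi |z|$ and $\Lambda $ is smooth near $(x,z)$ for $z>0$. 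In particular $c_1=\Lambda (\mathrm a)=\pi z$.

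It then remains to identify $c_2$. Solving the auxiliary ODE (\ref{eq:SDEYHat}) with $\sigma_0\equiv 0$ gives $d\hat X=0$, $d\hat Y=0$, $d\hat Z=\tfrac12\hat X\,dh^2-\tfrac12\hat Y\,dh^1$, hence $\hat X\equiv 0$, $\hat Y\equiv \hat y_0$ and, since $h^1$ runs from $0$ to $x_1=x$,
\[
\hat Z_T=\hat z_0-\tfrac12\hat y_0\int_0^1 dh^1=\hat z_0-\tfrac12\hat y_0 x ;
\]
the value is the same along both minimizers (the computation never sees $h^2$). With $\mathrm{\hat Y}_T=\Pi_2\mathrm{\hat X}_T=(0,\hat z_0-\hat y_0 x/2)$ and $\Lambda'(\mathrm a)=(\partial _x\Lambda ,\partial _z\Lambda )=(0,\pi )$ we get $\Lambda'(\mathrm a)\cdot \mathrm{\hat Y}_T=\pi (\hat z_0-\hat y_0 x/2)$, common to both minimizers, so the maximum in Theorem \ref{thm:MainThm} / Remark \ref{RmkMultipleMin} is attained by both, $c_2=\pi (\hat z_0-\hat y_0 x/2)$, with positive prefactor $c_0=\sum_{\mathrm h_0\in \mathcal{K}_{\mathrm a}^{\min }}c_0(\mathrm h_0)>0$. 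Substituting $l=2$, $c_1$ and $c_2$ into the conclusion of Theorem \ref{thm:MainThm} yields (\ref{ExpXZwithyz0hat}). I do not expect a serious obstacle, since the geometric work (counting minimizers, the Malliavin matrix, non-focality, smoothness of $\Lambda $) was already done in Case (III.1); the only points requiring a little care are (a) checking that moving the starting point to order $\varepsilon $ away from the origin, with $b$ still drift-free, stays within the scope of Theorem \ref{thm:MainThm} (i.e.\ that $\hat{\mathrm{x}}_0\neq 0$ is harmless), and (b) observing that the two minimizers contribute the \emph{same} second-order exponential --- which is what permits a single clean exponential factor rather than a sum of two. Both are settled by the short computations above.
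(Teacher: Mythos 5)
Your proposal is correct and follows essentially the same route as the paper: direct application of Theorem \ref{thm:MainThm} (via Remark \ref{RmkMultipleMin} for the two minimizers), with the same identification of $\Lambda'(\mathrm{a})=(0,\pi)$, the same auxiliary ODE giving $\hat Z_T=\hat z_0-\hat y_0 x/2$ along either minimizer, and the same observation that both minimizers produce an identical second-order exponential. The only thing the paper adds beyond your argument is an independent cross-check of (\ref{ExpXZwithyz0hat}) by left-translating the process back to the origin via the Heisenberg group law and invoking the already-established expansion started at $0$; this serves only as verification and is not needed for the proof.
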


Note that, upon taking $\hat{y}_{0}=\hat{z}_{0}=0$, we recover the previous
expansion of case (III.1),%
\[
\bar{f}^{\varepsilon }\left( x,z\right) \sim e^{-\pi z/\varepsilon
^{2}}\varepsilon ^{-2}c_{0}.
\]%
Other cases than (III.1), and also $\hat{X}_{0}=\hat{x}_{0}\neq 0$, are
treated similarly but the computations are more involved. Let us, instead,
verify (\ref{ExpXZwithyz0hat}) by a reduction to the zero starting point
case, using the Heisenberg group structure. Namely,%
\[
\left( \bar{X}^{\varepsilon },\bar{Y}^{\varepsilon },\bar{Z}^{\varepsilon
}\right) :=\left( 0,-\varepsilon \hat{y}_{0},-\varepsilon \hat{z}_{0}\right)
\ast \left( X^{\varepsilon },Y^{\varepsilon },Z^{\varepsilon }\right) 
\]
satisfies the same stochastic differential equations, but now started at the
origin. In particular, $$\left( \bar{X}_{T}^{\varepsilon },\bar{Z}%
_{T}^{\varepsilon }\right) =\left( X_{T}^{\varepsilon },Z_{T}^{\varepsilon
}-\varepsilon \left( \hat{z}_{0}+\hat{y}_{0}X_{T}^{\varepsilon }/2\right)
\right) $$ and so \footnote{%
Strictly speaking, this argument requires to check local uniformity of the
expansions with respect to $\mathrm{a}=\left( x,z\right) .$}%
\begin{eqnarray*}
f^{\varepsilon }\left( x,z\right)  &=&\bar{f}^{\varepsilon }\left(
x,z-\varepsilon \left( \hat{z}_{0}+\hat{y}_{0}x/2\right) \right)  \\
&\sim &e^{-\pi \left( z-\varepsilon \left( \hat{z}_{0}+\hat{y}_{0}x/2\right)
\right) /\varepsilon ^{2}}\varepsilon ^{-2}c_{0} \\
&\sim &e^{-\pi z}e^{\pi \left( \hat{z}_{0}-\hat{y}_{0}x/2\right)
/\varepsilon }\varepsilon ^{-2}c_{0}.
\end{eqnarray*}%
in agreement with the above proposition (the constant $c_{0}$ was allowed to
change here). It is not hard to devise variations on the theme (in
particular, upon inclusion of drift vector fields of order $\varepsilon $)
where theorem \ref{thm:MainThm} still applies but the above reasoning based on the (rigid)
Heisenberg group structure fails.

\end{document}